\theoremstyle{plain}
\newtheorem{thm}{Theorem}[section]
\newtheorem{lem}[thm]{Lemma}
\newtheorem{prop}[thm]{Proposition}
\newtheorem{cor}[thm]{Corollary}
\newtheorem{rem}[thm]{Remark}
\newtheorem*{Chernoff:inequality}
{Chernoff's inequality}
\newcommand{\C}{{\mathbb C}}
\newcommand{\R}{{\mathbb R}}
\newcommand{\N}{{\mathbb N}}
\newcommand{\Sn}{\mathbb{S}^n}
\newcommand{\Bn}{\mathbb{B}^n}
\newcommand\pla{^{p}_{\alpha}}
\renewcommand{\Re}{\operatorname{Re}}
\def\z{\zeta}
\title[Boundedness of the Bergman projection on 
 Fock spaces]{Boundedness of the Bergman 
 projection on generalized Fock-Sobolev spaces on 
 $\C^n$}
\author{Carme Cascante}
\address{C. Cascante: Departament de Matem\`atiques i
    Inform\`atica, Universitat  de Barcelona,
     Gran Via 585, 08071 Barcelona, Spain}
\email{cascante@ub.edu}
\author{Joan F\`abrega}
\address{J. F\`abrega: Departament de Matem\`atiques i
    Inform\`atica, Universitat  de Barcelona,
     Gran Via 585, 08071 Barcelona, Spain}
\email{joan$_{-}$fabrega@ub.edu}
\author{Daniel Pascuas}
\address{D. Pascuas: Departament de Matem\`atiques i
    Inform\`atica, Universitat  de Barcelona,
     Gran Via 585, 08071 Barcelona, Spain}
\email{daniel$_{-}$pascuas@ub.edu}
\keywords{Fock-Sobolev spaces, Bergman projection, Mittag-Leffler functions}
\subjclass[2010]{32A37; 30H20;  46E15; 33E12 }
\date{\today}
\thanks{The research 
	was supported in part by
        Ministerio de Econom\'{\i}a y Competitividad, Spain,   projects MTM2014-51834-P and MTM2015-69323-REDT, and Generalitat de Catalunya, project
2014SGR289. The first author was also supported in part by Ministerio de Econom\'{\i}a y Competitividad, Spain,   project MDM-2014-0445}
\begin{document}

\begin{abstract}

In this paper we solve a problem posed by 
H. Bommier-Hato, M. Engli\v{s} and E.H. Youssfi 
in  \cite{bommier-englis-youssfi}
on the boundedness of the Bergman-type projections
in generalized Fock spaces. 
It will be a consequence of two facts:
a full  description of the embeddings  between 
 generalized Fock-Sobolev  spaces and a complete 
 characterization of the boundedness of the above Bergman type projections
 between  weighted $L^p$-spaces related to generalized  Fock-Sobolev spaces. 
\end{abstract}
\maketitle	

\section{Introduction}
Let $dV=dV_n$ be the Lebesgue measure on $\C^n$ 
 normalized so that the measure of the unit ball $\Bn$ 
 is 1. If $n=1$ we write $dA=dV_1$. 
Let $d\sigma$ be the Lebesgue measure on the unit 
 sphere $\Sn$ normalized so that  $\sigma(\Sn)=1$. 
We denote by $H=H(\C^n)$ the space of entire 
 functions on $\C^n$.

Let $\ell>0$. For $1\le p<\infty$,  $\alpha>0$ and 
 $\rho\in\R$, 	the space 
$L^{p,\ell}_{\alpha, \rho}=L^{p}_{\alpha, \rho}$ consists 
of all measurable functions $f$ on $\C^n$ 
 such that
	\[
	\|f\|^p_{L^{p}_{\alpha, \rho}}:=
	\int_{\C^n} \bigl|f(z)(1+|z|)^{\rho }
e^{-\frac \alpha 2 |z|^{2\ell}}\bigr|^pdV(z)<\infty,
	\]
	that is, $L^{p}_{\alpha, \rho}=L^p(\C^n;
(1+|z|)^{\rho p}e^{-\frac{\alpha p}2|z|^{2\ell}}dV(z))$.
	
Moreover,
 $L^{\infty,\ell}_{\alpha,\rho}=L^{\infty}_{\alpha,\rho}$
 consists of all measurable functions $f$ on $\C^n$
 such that
	\[
	\|f\|_{L^{\infty}_{\alpha,\rho}}=
	\operatorname*{ess\,sup}_{z\in\C^n}|f(z)|
(1+|z|)^{\rho}e^{-\frac \alpha 2|z|^{2\ell}}<\infty.
	\]

	We define the generalized Fock-Sobolev spaces as 
 $F^{p}_{\alpha, \rho}:=H\cap L^{p}_{\alpha, \rho}$. 

When $\rho=0$, we obtain the generalized Fock spaces
 $F^{p}_{\alpha}=F^{p}_{\alpha, 0}$. 
According to this notation we write
 $L^{p}_{\alpha}=L^{p}_{\alpha, 0}$.

The space $L^{2}_\alpha$ is a Hilbert space with
 the inner product 
\[
\langle f,g\rangle_\alpha:=\int_{\C^n}
 f(z)\overline{g(z)}e^{-\alpha|z|^{2\ell}}dV(z).
\]
and $F^{2}_\alpha$ is a closed linear subspace of
 $L^{2}_\alpha$. Denote by $P_\alpha$ the  orthogonal projection from
 $L^{2}_\alpha$ to $F^{2}_\alpha$, which is usually
 called the Bergman projection.

In \cite[Theorem 9.1]{janson-peetre-rochberg} 
the authors showed that $P_\alpha$ is bounded from
$L^{p}_{\beta}$ to $F^{p}_{\gamma}$ 
if and only if $\beta<2\alpha$ and $\beta=\gamma$. 
In \cite{bommier-englis-youssfi} the authors studied 
the boundedness 
of  $P_\alpha$ 
between  the spaces 
$\mathcal{L}^p_b:=L^p(\C^n; e^{-b|z|^{2\ell}}dV(z))$
 and 
$\mathcal{L}^q_d:=L^q(\C^n; e^{-d|z|^{2\ell}}dV(z))$.
Observe that $\mathcal{L}^p_a=L^{p}_{2a/p}$. 
Since $\mathcal{L}^2_a=L^{2}_{a}$ the orthogonal 
 projection  $ \mathcal{P}_a$ from 
 $\mathcal{L}^2_a$ onto 
 $\mathcal{F}^2_a:=H\cap\mathcal{L}^2_a$ 
 coincides with $P_a$.
One advantage of considering the spaces  
 $L^{p}_{\alpha}$ is that  permits us to include the 
 case $p=\infty$. 
Their results are given in terms of a parameter $c$
 defined by $c:=\frac{4d}{a^2 q}(a-\frac bp)$. 
Rewriting the parameters as $a=\alpha$, 
$b=\beta p/2$ and $d=\gamma q/2$, 
we have that, in our notations, 
$c=\gamma\frac{2\alpha-\beta}{\alpha^2}$. 

The main results in \cite{bommier-englis-youssfi} are:

\begin{enumerate}
\item If  $P_\alpha$ is bounded then $c\ge 1$.

\item If 
$c>1$ then $P_\alpha$ is bounded.

\item  If 
$c=1$ and $\ell\le 1$ then $P_\alpha$ is bounded if 
and only if $q\ge p$.
\end{enumerate}

For  $c=1$ and $\ell>1$ 
 the authors only obtain  partial results.
In particular they prove that if $c=1$ and 
 $\frac{2n}{2n-1}<\ell<2$ 
then $P_\alpha$ is bounded if and only if $q=p$.

The initial motivation of this work was to close the 
 remaining open cases which will be achieved by 
 proving:

\begin{enumerate}
\item[(iv)] If $c=1$ and $\ell>1$ then $P_\alpha$ is bounded if and only if $q=p$.
\end{enumerate}

This result shows that, of the four possible mutually 
 exclusive assertions in 
\cite[Proposition 17]{bommier-englis-youssfi},
(a) is the valid option.

Note that  if $c\ge 1$, then
$a-\frac bp>0$, which 
in our notation is equivalent to 
 $\beta<2\alpha$. The later condition is necessary in 
 order that the "pointwise evaluation" of the 
 Bergman projection is bounded on $L^p_{\beta}$ 
 (see Lemma \ref{lem:well-defined} below).

Our main result is the following theorem 
for generalized Fock-Sobolev spaces.

  \begin{thm}\label{thm:Bprojection}
  	Let $\ell\ge 1$, $\alpha, \beta, \gamma>0$ and 
 $\rho,\eta\in\R$. For $1\le p,q\le \infty$, 
  	 $P_\alpha$ maps boundedly
  	$L^{p}_{\beta,\rho}$ to 
  	$L^{q}_{\gamma,\eta}$ if and
  	only if  one of the 
  	following conditions holds:
  	\begin{enumerate}
  		\item $0<\alpha^2/(2\alpha-\beta)<\gamma$.
  		\item   $\alpha^2/(2\alpha-\beta)=\gamma$,
  		$p\le q$ and 
  	$\rho-\eta\ge 2n(\ell-1)\left(\frac 1p-\frac 1q  \right)$.
  		\item $\alpha^2/(2\alpha-\beta)=\gamma$,
  		$q< p$ and  $\rho-\eta> 2n 
  		\left(\frac 1q-\frac 1p \right)$.
  	\end{enumerate}
  \end{thm}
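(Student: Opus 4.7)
The plan is to split the proof of Theorem~\ref{thm:Bprojection} in the way announced in the abstract, namely into a \emph{projection step} into a critical generalized Fock--Sobolev space followed by an \emph{embedding step} from that space into $L^{q}_{\gamma,\eta}$. Set $\delta_0 := \alpha^2/(2\alpha-\beta)$, which is well defined exactly when $\beta<2\alpha$; this last restriction is in any case forced on us by the boundedness of pointwise evaluation (Lemma~\ref{lem:well-defined}). The core intermediate result I would establish is that $P_\alpha$ sends $L^{p}_{\beta,\rho}$ continuously into $F^{p}_{\delta_0,\rho^\sharp}$ for a sharp Sobolev index $\rho^\sharp$ encoding the derivative loss of the kernel of $P_\alpha$ at the critical scale, and then combine this with a full characterization of the continuous inclusions $F^{p}_{\delta_0,\rho^\sharp}\hookrightarrow L^{q}_{\gamma,\eta}$, which is exactly the other building block promised in the abstract.

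For the sufficiency direction the three cases fall out as follows. Case~(1), $\delta_0<\gamma$, is supercritical: the factor $e^{-\gamma|z|^{2\ell}/2}$ dominates $(1+|z|)^{\rho^\sharp-\eta} e^{-\delta_0|z|^{2\ell}/2}$ with exponential slack, so the inclusion step is trivial for every admissible $q$. Case~(2), $\delta_0=\gamma$ with $p\le q$, is a genuine Fock--Sobolev embedding: via a lattice decomposition on balls of radius $\asymp (1+|z|)^{1-\ell}$ (the natural atomic scale for weights $e^{-\delta_0|z|^{2\ell}/2}$), one obtains the sharp bound $\rho-\eta\ge 2n(\ell-1)(1/p-1/q)$, in which the factor $\ell-1$ is precisely the derivative loss inherited from the scale of the lattice. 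Case~(3), $\delta_0=\gamma$ with $q<p$, reduces after the same decomposition to a weighted $\ell^p\to\ell^q$ inclusion on that lattice, whose strict summability threshold reads $\rho-\eta>2n(1/q-1/p)$.

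For necessity I would test $P_\alpha$ on two explicit families. First, the normalized reproducing kernels $k_w(z)=K_\alpha(z,w)/\|K_\alpha(\cdot,w)\|_{L^{p}_{\beta,\rho}}$ are fixed points of $P_\alpha$, and their $L^{p}_{\beta,\rho}$- and $L^{q}_{\gamma,\eta}$-norms admit precise asymptotics as $|w|\to\infty$ thanks to the Mittag-Leffler-type estimates for $K_\alpha$; the requirement that $\|P_\alpha k_w\|_{L^{q}_{\gamma,\eta}}$ stays bounded then forces $\delta_0\le\gamma$ and, in the equality case, the inequality between $\rho$ and $\eta$ displayed in~(2). Second, evaluation on the monomials $z^\mu$, which are eigenfunctions of $P_\alpha$, produces a weighted $\ell^p\to\ell^q$ inequality on Taylor coefficients that rules out the boundary case $q<p$ with equality in~(3) and so yields the required strict inequality there.

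The principal obstacle is the critical projection step when $\ell>1$. In that range the kernel estimate $|K_\alpha(z,w)|\lesssim (1+|z||w|)^{n(\ell-1)}\,e^{\alpha|z|^\ell|w|^\ell \cos(\ell(\arg z-\arg w))}$ decays only within narrow angular sectors of aperture $\asymp 1/\ell$, and any direct Schur test with a product weight fails; this is precisely the difficulty that restricted \cite{bommier-englis-youssfi} to $\tfrac{2n}{2n-1}<\ell<2$. I would circumvent it by cutting the integral defining $P_\alpha f(z)$ into a diagonal region where both $\bigl||w|-|z|\bigr|\lesssim |z|^{1-\ell}$ and $|z|^{\ell-1}|\arg(z\bar w)|\lesssim 1$, and its complement: on the diagonal region the kernel is essentially Gaussian on scale $|z|^{1-\ell}$, so the operator behaves like a weighted convolution whose sharp $L^p\to L^q$ norm carries the Sobolev loss $2n(\ell-1)(1/p-1/q)$, while the complementary region is absorbed using the full Laplace asymptotics of the Mittag-Leffler function. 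This is where the precise constants appearing in~(2) and~(3) are manufactured, and where the proof departs decisively from the $\ell\le 1$ setting handled in previous work.
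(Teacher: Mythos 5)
Your overall architecture --- project $L^{p}_{\beta,\rho}$ into a Fock--Sobolev space at the critical parameter $\alpha^2/(2\alpha-\beta)$ and then invoke an embedding theorem --- is indeed the paper's strategy, but two of your key steps have genuine gaps. First, the necessity direction. The normalized reproducing kernels $k_w=K_\alpha(\cdot,w)/\|K_\alpha(\cdot,w)\|_{L^{p}_{\beta,\rho}}$ are fixed by $P_\alpha$, and by Corollary~\ref{cor:qnormBergman} the ratio $\|K_\alpha(\cdot,w)\|_{L^{q}_{\gamma,\eta}}/\|K_\alpha(\cdot,w)\|_{L^{p}_{\beta,\rho}}$ behaves like $(1+|w|)^{\eta-\rho+2n(\ell-1)(1/q'-1/p')}e^{\frac{\alpha^2}{2}(\frac1\gamma-\frac1\beta)|w|^{2\ell}}$; its boundedness forces only $\gamma\ge\beta$. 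Since $\alpha^2/(2\alpha-\beta)-\beta=(\alpha-\beta)^2/(2\alpha-\beta)\ge0$, this is strictly weaker than the required $\gamma\ge\alpha^2/(2\alpha-\beta)$ whenever $\alpha\ne\beta$. The sharp necessary condition cannot be seen on holomorphic test functions at all: what the paper proves (Proposition~\ref{prop:Ponto}, via the dilation-and-multiplication isomorphism $T_\delta$ of Lemmas \ref{lem:PatoPk} and \ref{lem:operatorT}) is that $P_\alpha$ maps $L^{p}_{\beta,\rho}$ \emph{onto} $F^{p}_{\kappa,\rho}$ with $\kappa=\alpha^2/(2\alpha-\beta)$, so the correct test functions are the non-holomorphic preimages $T_\delta^{-1}(K_\kappa(\cdot,w))$, i.e.\ dilated kernels multiplied by $e^{(\beta-\alpha)|z|^{2\ell}}$. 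Your proposal only asserts boundedness \emph{into} the critical space, and without surjectivity the reduction of necessity to the embedding theorem collapses. Likewise, the strict inequality $\rho-\eta>2n(1/q-1/p)$ in case (iii) is a summability condition that no single test function (kernel or monomial) can detect --- for $q<p$ individual functions yield only vacuous bounds --- and a weighted $\ell^p\to\ell^q$ inequality on Taylor coefficients is not available for $p\ne2$; the paper needs Luecking's Khinchine-inequality argument over a lattice of balls of radius $\simeq(1+|z|)^{1-\ell}$ (Proposition~\ref{prop:necpgrq}), plus complex interpolation to reach $p=\infty$ (Proposition~\ref{prop:necinf>q}).

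Second, the critical projection step is both easier and sharper than you suggest. There is no ``Sobolev index $\rho^\sharp$ encoding a derivative loss'' in the projection: the identity $P_\alpha=P_\kappa\circ T_\delta$ reduces everything to the self-dual case $P_\kappa:L^{p}_{\kappa,\rho}\to F^{p}_{\kappa,\rho}$, where a plain Schur-type test with the weight $(1+|w|)^{c}$ succeeds because $\int_{\C^n}e^{-\frac{\kappa}{2}|z|^{2\ell}}|K_\kappa(z,w)|e^{-\frac{\kappa}{2}|w|^{2\ell}}(1+|w|)^{c}\,dV(w)\simeq(1+|z|)^{c}$ (Proposition~\ref{prop:PLpontoFp}); so your claim that ``any direct Schur test with a product weight fails'' does not apply after this normalization, and your diagonal/off-diagonal decomposition is unnecessary. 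Moreover, building the loss $2n(\ell-1)(1/p-1/q)$ into the projection step would ruin sharpness: the projection preserves both $p$ and $\rho$ exactly, and the entire loss lives in the subsequent embedding $F^{p}_{\kappa,\rho}\hookrightarrow F^{q}_{\kappa,\eta}$.
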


In particular for $\rho=\eta$ we obtain the following 
 generalization of (iv).

\begin{cor}\label{cor:Bprojection}
	Let $\ell> 1$, $\alpha, \beta, \gamma>0$ and 
 $\rho\in\R$.	For $1\le p,q\le \infty$, 
	$P_\alpha$ maps boundedly
	$L^{p}_{\beta,\rho}$ to 
	$L^{q}_{\gamma,\rho}$ if and
	only if  either
	$0<\alpha^2/(2\alpha-\beta)<\gamma$ or 
 $\alpha^2/(2\alpha-\beta)=\gamma$ and $p=q$.
	\end{cor}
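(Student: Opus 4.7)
My plan is to obtain Corollary \ref{cor:Bprojection} as an immediate specialization of Theorem \ref{thm:Bprojection} by setting $\eta=\rho$. Under this specialization the weight-parameter difference $\rho-\eta$ vanishes, and the three mutually exclusive sufficient-and-necessary conditions in Theorem \ref{thm:Bprojection} collapse considerably. The work is just to check how each of the three cases of the theorem simplifies and to confirm which of them can still be realized.

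First, case (i) of Theorem \ref{thm:Bprojection}, namely $0<\alpha^2/(2\alpha-\beta)<\gamma$, is independent of $\rho$, $\eta$, $p$, $q$, so it is preserved verbatim and provides one of the two alternatives in the corollary. Next I would analyze case (ii): with $\eta=\rho$ it becomes $\alpha^2/(2\alpha-\beta)=\gamma$, $p\le q$, and $0\ge 2n(\ell-1)(1/p-1/q)$. Since $\ell>1$ strictly (this is exactly where the hypothesis of the corollary enters) and $1/p-1/q\ge 0$, the right-hand side is non-negative, so the inequality forces $1/p=1/q$, i.e.\ $p=q$. This yields the second alternative of the corollary. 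Finally, case (iii) requires $q<p$ and the strict inequality $0>2n(1/q-1/p)$; but $q<p$ implies $1/q-1/p>0$, so this case is incompatible and contributes nothing.

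Combining these three observations one obtains exactly the stated characterization: $P_\alpha$ maps $L^p_{\beta,\rho}$ boundedly into $L^q_{\gamma,\rho}$ if and only if either $0<\alpha^2/(2\alpha-\beta)<\gamma$, or $\alpha^2/(2\alpha-\beta)=\gamma$ together with $p=q$. There is essentially no technical obstacle beyond the bookkeeping above, since all the analytic content is already contained in Theorem \ref{thm:Bprojection}; what distinguishes Corollary \ref{cor:Bprojection} from part (iv) of the Bommier-Hato--Engli\v{s}--Youssfi results is merely the freedom in the weight parameter $\rho$, and this is handled uniformly by the theorem. The only point that deserves care is to note that the strict hypothesis $\ell>1$ is precisely what rules out additional boundedness at the critical exponent $\alpha^2/(2\alpha-\beta)=\gamma$ when $p\ne q$; if $\ell=1$ were allowed, case (ii) would collapse to the condition $p\le q$ and the statement would fail.
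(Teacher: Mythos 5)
Your proposal is correct and is exactly the paper's route: the corollary is obtained by specializing Theorem \ref{thm:Bprojection} to $\eta=\rho$, observing that case (i) survives unchanged, case (ii) forces $p=q$ because $\ell>1$ makes $2n(\ell-1)(1/p-1/q)\ge 0$ with equality only when $p=q$, and case (iii) becomes vacuous. Nothing further is needed.
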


Our approach to obtain 
Theorem \ref{thm:Bprojection}  differs from the one in  
\cite{bommier-englis-youssfi}. Instead of proving 
directly the characterizations, we deduce the results 
as a consequence of two ingredients: 
the first is the identity (see Proposition 
 \ref{prop:Ponto}  below)
 \begin{equation}\label{eqn:PLontoF}
 P_\alpha(L^{p}_{\beta,\rho})
=F^{p}_{\frac{\alpha^2}{2\alpha-\beta},\rho}\quad
(1\le p\le\infty,  \ell\ge 1,  \beta<2\alpha, \rho>0)
\end{equation}
and the second one is the following embedding result:
\begin{thm}\label{thm:embeddings}
	Let $\ell\ge 1$, $\beta,\gamma>0$ and 
 $\rho,\eta\in\R$. For $1\le p,q\le\infty$,
the embedding $F^{p}_{\beta,\rho}\hookrightarrow 
 F^{q}_{\gamma,\eta}$ holds
	if and only if  one of the following  three conditions is 
 satisfied:
	\begin{enumerate}
		\item\label{item:embeddings1} $\beta<\gamma$.
		\item\label{item:embeddings2} $\beta=\gamma$, 
 $q\ge p$ and \,\,	
$2n(\ell-1)\left(\frac 1p-\frac 1q\right)\le \rho-\eta$.
		\item\label{item:embeddings3} $\beta=\gamma$,
 $q< p$ and \,\,
$2n\left(\frac 1q-\frac 1p\right)< \rho-\eta$.
	\end{enumerate}
\end{thm}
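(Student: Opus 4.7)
The plan is to handle sufficiency and necessity separately; the engine on the positive side is a sharp subharmonic pointwise estimate. I would first establish that for every entire $f$ and every $z\in\C^n$,
$$
(1+|z|)^\rho e^{-\beta|z|^{2\ell}/2}|f(z)|\lesssim (1+|z|)^{2n(\ell-1)/p}\,\|f\|_{F^p_{\beta,\rho}},
$$
by applying the mean-value inequality to the subharmonic function $|f|^p$ on the Euclidean ball $B(z,c(1+|z|)^{-(\ell-1)})$ — the natural length scale on which the weight $(1+|\cdot|)^\rho e^{-\beta|\cdot|^{2\ell}/2}$ is comparable to its value at $z$; the ball has volume $\asymp (1+|z|)^{-2n(\ell-1)}$, producing the polynomial factor on the right. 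With this in hand, case (i) ($\beta<\gamma$) is immediate, since the excess Gaussian $e^{-(\gamma-\beta)q|z|^{2\ell}/2}$ absorbs any polynomial factor. In case (ii) ($\beta=\gamma$, $p\le q$) I would split $|f|^q=|f|^p\cdot|f|^{q-p}$, apply the pointwise bound only to the $|f|^{q-p}$ factor, and leave $|f|^p$ inside the integral; the exponentials combine to yield the $F^p_{\beta,\rho}$-density times $(1+|z|)^{q(\eta-\rho)+2n(\ell-1)(q-p)/p}$, and the hypothesis in (ii) is exactly what forces this extra polynomial to be bounded. In case (iii) ($\beta=\gamma$, $q<p$) I would apply H\"older's inequality with conjugate exponents $p/q$ and $p/(p-q)$ to separate the $F^p_{\beta,\rho}$-density from the polynomial $(1+|z|)^{q(\eta-\rho)}$; the latter is integrable on $\C^n$ precisely when $\rho-\eta>2n(1/q-1/p)$. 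The endpoint cases $p=\infty$ or $q=\infty$ are handled by the same recipe, the pointwise estimate playing the role of the $L^\infty$ endpoint.

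For necessity I would exhibit test functions that saturate the pointwise bound. A single reproducing-kernel-type function $f_w$ of $F^2_\beta$ — a Mittag-Leffler function in this generalized setting, as suggested by the paper's keywords — can be normalized so that $\|f_w\|_{F^p_{\beta,\rho}}\asymp 1$ while $|f_w|$ is of order $(1+|w|)^{2n(\ell-1)/p-\rho}e^{\beta|w|^{2\ell}/2}$ and essentially concentrated on $B(w,(1+|w|)^{-(\ell-1)})$. Bounding $\|f_w\|_{F^q_{\gamma,\eta}}$ from below by integrating over this ball forces $\beta\le\gamma$; at equality $\beta=\gamma$ the argument delivers exactly the sharp inequality of (ii). To rule out the borderline $\rho-\eta=2n(1/q-1/p)$ in (iii), I would test against discrete superpositions $\sum_k c_kf_{w_k}$ over a well-separated lattice $\{w_k\}$ with $|w_k|\to\infty$ and near-disjoint support, reducing the embedding to an $\ell^p\hookrightarrow\ell^q$ estimate against a polynomial weight that diverges at the threshold.

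The hardest part will be the sharp norm computation of the Mittag-Leffler-based test functions $f_w$ in the critical regime $\beta=\gamma$. For the Gaussian case $\ell=1$ these functions reduce to closed-form Fock kernels; for $\ell>1$ they require off-diagonal decay and on-diagonal growth estimates for Mittag-Leffler functions — the technical machinery the paper's keywords already allude to. Converting these asymptotics into the precise power $(1+|w|)^{2n(\ell-1)/p}$ in the pointwise bound, and distinguishing the non-strict threshold of (ii) from the strict one of (iii), is what turns the heuristic into a proof and where most of the technical work must sit.
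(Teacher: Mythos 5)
Your sufficiency arguments and the kernel-testing necessity of (i) and of the inequality in (ii) follow the paper's route closely, but there are two genuine gaps. First, your derivation of the pointwise bound $|f(z)|\lesssim(1+|z|)^{2n(\ell-1)/p-\rho}e^{\beta|z|^{2\ell}/2}\|f\|_{F^p_{\beta,\rho}}$ by the mean-value inequality rests on the claim that the weight is comparable to its value at $z$ on $B(z,c(1+|z|)^{1-\ell})$. That is true for the polynomial factor but false for the Gaussian one: on such a ball $\bigl||w|^{2\ell}-|z|^{2\ell}\bigr|$ is of order $(1+|z|)^{\ell}$, which is unbounded (already for $\ell=1$). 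The estimate itself is correct, but it needs either the reproducing formula $f=P_\beta f$ together with an $L^{p'}$-norm estimate of the Bergman kernel (the paper's route, via its Lemma 2.9 and Corollary 2.13), or a subharmonicity argument applied not to $|f|^p e^{-\beta p|\cdot|^{2\ell}/2}$ but to $|fe^{\beta h_z/2}|^p$, where $h_z$ is a holomorphic function on the ball whose real part differs from $-|w|^{2\ell}$ by a bounded pluriharmonic correction (the $\partial\overline{\partial}$-trick the paper uses in its Lemma 3.6). As written, your first step does not go through.

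Second, and more seriously, the necessity of the strict inequality $\rho-\eta>2n(1/q-1/p)$ in (iii) is the hard part of the theorem, and your plan relies on the test functions $f_{w_k}$ having ``near-disjoint support.'' They do not: the kernels $K_\beta(\cdot,w_k)$ are entire, and for $\ell>1$ their off-diagonal decay is only polynomial outside a narrow sector, so the cross terms in $\bigl\|\sum_k c_kf_{w_k}\bigr\|_{F^q_{\gamma,\eta}}$ cannot be discarded; a naive lower bound by restriction to disjoint balls fails without controlling them. This is precisely what the paper's use of Luecking's method supplies: one randomizes the signs with Rademacher functions, applies Khinchine's inequality to replace the superposition by the square function $\bigl(\sum_k|c_k|^2|K_\beta(\cdot,z_k)|^2\dots\bigr)^{1/2}$, and only then localizes to the balls $B_k$ and dualizes $\ell^{p/q}$ to extract the integrability condition on $(1+|z|)^{-(\rho-\eta)pq/(p-q)}$. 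Without the randomization (or an equally effective substitute) your reduction to an $\ell^p\hookrightarrow\ell^q$ inequality is unjustified. Finally, this lattice argument as described requires $p<\infty$ (the dual-space step uses $(\ell^{p/q})^*=\ell^{p/(p-q)}$, and the $\ell^p\to F^p$ boundedness of the synthesis operator is only available for finite $p$); the case $q<p=\infty$ needs a separate argument, which the paper obtains by complex interpolation between $F^q_{\beta,\rho}$ and $F^\infty_{\beta,\rho}$, and which your proposal does not address.
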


Note that as an immediate consequence of 
Theorem \ref{thm:embeddings} we obtain:

\begin{cor}$ $

\begin{enumerate}
\item 
If $\ell\ge 1$ and
the embedding $F^{p}_{\beta,\rho}\hookrightarrow 
 F^{q}_{\beta,\eta}$ holds, 
then $\rho\ge \eta$.
\item For $\ell=1$,   the embedding
$F^{p}_{\beta,\rho}\hookrightarrow 
 F^{q}_{\beta,\rho}$  holds
if and only if $p\le q$.
\item\label{item3} 
For $\ell>1$,   the embedding
$F^{p}_{\beta,\rho}\hookrightarrow F^{q}_{\beta,\rho}$ 
if and only if $p= q$.
	\end{enumerate}
\end{cor}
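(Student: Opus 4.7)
The plan is to deduce each item directly from Theorem~\ref{thm:embeddings} by specializing to $\beta=\gamma$ (which rules out condition~(i) of that theorem) and then tracking which of the remaining alternatives (ii) and (iii) survive under the extra constraints of each item. All analytic content has already been packaged in Theorem~\ref{thm:embeddings}, so the only work left is a careful case check; the main ``obstacle'' is just making sure no sign error is made when the weight parameters collapse.

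For item~(1), suppose the embedding $F^{p}_{\beta,\rho}\hookrightarrow F^{q}_{\beta,\eta}$ holds. If case~(ii) applies, then $q\ge p$ gives $\frac{1}{p}-\frac{1}{q}\ge 0$; combined with $\ell\ge 1$, this makes $2n(\ell-1)\bigl(\frac{1}{p}-\frac{1}{q}\bigr)$ nonnegative, so the bound $2n(\ell-1)\bigl(\frac{1}{p}-\frac{1}{q}\bigr)\le \rho-\eta$ forces $\rho-\eta\ge 0$. If instead case~(iii) applies, then $q<p$ gives $2n\bigl(\frac{1}{q}-\frac{1}{p}\bigr)>0$, and the strict inequality $2n\bigl(\frac{1}{q}-\frac{1}{p}\bigr)<\rho-\eta$ then forces $\rho-\eta>0$. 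In either case $\rho\ge\eta$.

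For items~(2) and~(3), impose additionally $\rho=\eta$. Case~(iii) would then require $q<p$ and $2n\bigl(\frac{1}{q}-\frac{1}{p}\bigr)<0$, which is incompatible, so only case~(ii) can occur: $p\le q$ together with $2n(\ell-1)\bigl(\frac{1}{p}-\frac{1}{q}\bigr)\le 0$. When $\ell=1$ the second inequality is automatic, and the embedding is equivalent to $p\le q$, proving~(2). When $\ell>1$ the second inequality forces $\frac{1}{p}\le\frac{1}{q}$, i.e.\ $q\le p$; combining with $p\le q$ gives $p=q$, proving~(3).
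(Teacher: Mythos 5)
Your proposal is correct and is precisely the argument the paper intends: the corollary is stated as an immediate consequence of Theorem~\ref{thm:embeddings} (the paper gives no written proof), and your case check of alternatives (ii) and (iii) under $\beta=\gamma$, with the sign analysis of $2n(\ell-1)\bigl(\tfrac1p-\tfrac1q\bigr)$ and $2n\bigl(\tfrac1q-\tfrac1p\bigr)$, is exactly the verification that is being left to the reader.
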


The proof of Theorem \ref{thm:embeddings} requires
 of some results which can be of interest by 
 themselves. 
For instance, assertions  \eqref{item:embeddings1} 
 and \eqref{item:embeddings2} 
follow from precise pointwise and 
 $L^{p}_{\beta,\rho}$-norm estimates of 
the Bergman kernel. 
As a consequence, we derive pointwise
 estimates of the functions in $F^{p}_{\beta,\rho}$ and 
some properties on the boundedness of the Bergman 
 projection.
The most difficult part is the proof of assertion 
 \eqref{item:embeddings3}. In this case, 
for $1\le q<p<\infty$, we use a 
technique due to D. Luecking (see \cite{luecking}), 
 based on Kinchine's  inequality, 
which permits the construction of adequate 
test functions. 
Then  the case  $1\le q<p=\infty$ 
follows by extrapolation.

The paper is organized as follows: 
In Section \ref{sec:Bergman} we obtain pointwise
 and $L^p_{\alpha,\rho}$-norm estimates of the
 Bergman kernel, from which 
 the boundedness
 of the Bergman projection $P_\alpha$ on
 $L^p_{\alpha,\rho}$ is deduced. 
In Sections \ref{sec:embeddings} and \ref{sec:boundedness} we prove 
Theorems \ref{thm:embeddings} and 
 \ref{thm:Bprojection} respectively.

{\bf Notations:}
In  the next sections we only consider spaces 
 $F^{p,\ell}_{\alpha,\rho}=F^{p}_{\alpha,\rho}$, with 
 $\ell\ge 1$, $\alpha>0$ and $\rho\in\R$. 
So we omit the conditions on $\ell,\alpha$ and $\rho$ 
 in the statement of the results.
We denote by $p'$ the conjugate exponent of $p\in [1,\infty]$.

Let $\N$ be the set of non-negative entire
 numbers.
 For a multi-index $\nu=(\nu_1,\cdots,\nu_n)\in \N^n$
  and $z=(z_1,\cdots,z_n)\in \C^n$,    we write,  as usual,
  $z^\nu=z_1^{\nu_1}\cdots z_n^{\nu_n}$,
$\nu!=\nu_1!\cdots\nu_n!$ and 
 $|\nu|=\nu_1+\cdots+\nu_n$.

For $z,w\in\C^n$, 
$z\overline{w}=\sum_{j=1}^n z_j\overline{w}_j$. 
If $z\in\C^n$ and $r>0$ then $B(z,r)$ is the open ball in
 $\C^n$ with center~{$z$} and radius $r$. 
When $n=1$, $B(z,r)$ is denoted, as usual, by $D(z,r)$.

If $E\subset\C^n$ then $\mathcal{X}_{E}$ is the 
characteristic function of $E$. 
 
If $X, Y$ are normed spaces, the notation
 $X\hookrightarrow Y$ means that the mapping
$f\in X\mapsto f\in Y$ is bounded.

 For $\lambda\in\C\setminus\{0\}$, we denote by 
 $\arg\lambda$   the principal branch of the 
 argument of $\lambda$,  that is, 
 $-\pi<\arg\lambda\le\pi$. 
Moreover, 
 $\lambda^\beta=|\lambda|^\beta
e^{i\beta\arg\lambda}$, for $\beta\in\R$.

The letter $C$ will denote a positive constant, 
which may vary from place to place. 
The notation $\Phi\lesssim \Psi$ means that there
 exists a constant $C>0$, which does not depend on
 the involved variables,
 such that $\Phi\le C\, \Psi$. We write $\Phi\simeq \Psi$
 when $\Phi\lesssim \Psi$ and $\Psi\lesssim \Phi$.

\section{The Bergman projection on
 $L^{p}_{\alpha,\rho}$}\label{sec:Bergman}

\subsection{On the two parametric Mittag-Leffler
 functions $E_{a,b}$}\quad\par
 The two parametric  Mittag-Leffler  functions are the
 entire functions
\[
E_{a,b}(\lambda)
:=\sum_{k=0}^\infty \frac{\lambda^k}{\Gamma(a k+b)}
\qquad(\lambda\in\C,\,\, a,b>0).
\]
A good general reference for the Mittag-Leffler
 functions is the
 book~{\cite{gorenflo-kilbas-mainardi-rogosin}}.

In this section we recall the asymptotic expansions of 
the two parametric Mittag-Leffler functions and their
 derivatives.
Those expansions will be useful to obtain pointwise
 and norm estimates of the Bergman kernel.

\begin{thm}[{\cite[Theorem 1.2.1]{popov-sedletskii}}]
	Let $a\in (0,1)$ and $b>0$. 
Then, for $|\lambda|\to\infty$, we have
\begin{equation}\label{eqn:Eab}
E_{a,b}(\lambda)=
\begin{cases}	
\frac 1a \lambda^{(1-b)/a}e^{\lambda^{1/a}}
+O(\lambda^{-1}),
	 & \quad\text{if}\quad|\arg\lambda|\le a\pi,\\
O(\lambda^{-1}),
	&  \quad\text{if}\quad|\arg\lambda|\ge a\frac{2\pi}3.
	\end{cases}
	\end{equation}
\end{thm}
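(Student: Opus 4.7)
The approach I would take is the classical Hankel-contour method originating with Mittag-Leffler. First, I derive an integral representation for $E_{a,b}$. Starting from Hankel's formula $1/\Gamma(z)=(2\pi i)^{-1}\int_{H}e^{t}t^{-z}\,dt$ applied to $z=ak+b$, summing over $k$ and interchanging sum and integral (justified by dominated convergence once the contour is chosen so that $|t^{a}|$ stays bounded away from $|\lambda|$), the substitution $\zeta=t^{a}$ yields
\begin{equation*}
E_{a,b}(\lambda)=\frac{1}{2\pi i\,a}\int_{\gamma(\epsilon,\mu)}\frac{e^{\zeta^{1/a}}\zeta^{(1-b)/a}}{\zeta-\lambda}\,d\zeta,
\end{equation*}
where $\gamma(\epsilon,\mu)$ is the Hankel contour comprising the rays $\arg\zeta=\pm\mu$, $|\zeta|\ge\epsilon$, joined by the arc $|\zeta|=\epsilon$, $|\arg\zeta|\le\mu$. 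Any angle $\mu$ with $a\pi/2<\mu<\min(\pi,a\pi)=a\pi$ is admissible; such a $\mu$ exists because $a\in(0,1)$.

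Next, I extract the asymptotics by residue calculus. The integrand has a simple pole at $\zeta=\lambda$ with residue $\frac{1}{a}\lambda^{(1-b)/a}e^{\lambda^{1/a}}$. If $|\arg\lambda|<\mu$, the pole lies in the region bounded by $\gamma(\epsilon,\mu)$; deforming the contour outward past $\lambda$ picks up this residue and leaves a remainder that is $O(|\lambda|^{-1})$. If $|\arg\lambda|>\mu$, no residue is captured and the bare integral along $\gamma(\epsilon,\mu)$ is itself $O(|\lambda|^{-1})$. In both cases the bound rests on the super-exponential decay $|e^{\zeta^{1/a}}|=\exp(|\zeta|^{1/a}\cos(\mu/a))$ on the rays (genuine decay because $\cos(\mu/a)<0$ for $\mu/a>\pi/2$), combined with the elementary estimate $|\zeta-\lambda|\gtrsim|\lambda|$ far from the pole.

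To finish, I specialize $\mu$ to each case. For the region $|\arg\lambda|\le a\pi$ I choose $\mu$ arbitrarily close to, but strictly less than, $a\pi$; this establishes the leading-term formula on $|\arg\lambda|<a\pi$, and the boundary $|\arg\lambda|=a\pi$ is handled by continuity, since $\Re(\lambda^{1/a})=-|\lambda|^{1/a}$ there, making the exponential factor itself already negligible. For $|\arg\lambda|\ge 2a\pi/3$ I choose $\mu$ slightly below $2a\pi/3$, which still lies in $(a\pi/2,a\pi)$, and the second estimate follows directly from the residue-free case.

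The main obstacle is obtaining the $O(|\lambda|^{-1})$ remainder bound \emph{uniformly} in $\arg\lambda$ as that argument approaches the limiting angle $\mu$; this is a Laplace-type estimate that hinges on $\cos(\mu/a)<0$ and requires care near the pole. In fact, expanding $(\zeta-\lambda)^{-1}=-\sum_{k\ge 1}\zeta^{k-1}\lambda^{-k}$ termwise inside the remainder produces the full asymptotic expansion $E_{a,b}(\lambda)=\frac{1}{a}\lambda^{(1-b)/a}e^{\lambda^{1/a}}-\sum_{k=1}^{N}\lambda^{-k}/\Gamma(b-ak)+O(|\lambda|^{-N-1})$ established in \cite{popov-sedletskii}, of which the statement quoted here is merely the $N=0$ case.
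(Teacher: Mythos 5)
The paper does not prove this statement: it is quoted verbatim from Popov and Sedletski\u{\i} \cite{popov-sedletskii}, so there is no in-paper argument to compare yours against. Your Hankel-contour proof is precisely the classical argument from that reference (see also \cite{gorenflo-kilbas-mainardi-rogosin}): the integral representation, the residue $\frac 1a\lambda^{(1-b)/a}e^{\lambda^{1/a}}$ at $\zeta=\lambda$, the decay $\exp\bigl(|\zeta|^{1/a}\cos(\mu/a)\bigr)$ with $\cos(\mu/a)<0$ on the rays for $\mu>a\pi/2$, and the expansion of $(\zeta-\lambda)^{-1}$ yielding the full asymptotic series are all correct.

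The one point you flag but do not close is a genuine gap. With a single opening angle $\mu<a\pi$ the estimate $|\zeta-\lambda|\gtrsim|\lambda|$ fails as $\arg\lambda\to\pm\mu$, so choosing $\mu$ ``arbitrarily close to $a\pi$'' does not give a uniform $O(|\lambda|^{-1})$ remainder on the closed sector $|\arg\lambda|\le a\pi$, and ``continuity'' is not the mechanism that handles the boundary. The standard repair uses two contours: fix $a\pi/2<\mu'<\mu_1<\mu<a\pi$. For $|\arg\lambda|\le\mu_1$ use the contour with angle $\mu$; the pole is enclosed, its distance to the contour is at least $c\,|\lambda|$ with $c$ depending only on $\mu-\mu_1$, and you get the residue plus a uniform $O(|\lambda|^{-1})$. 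For $\mu_1\le|\arg\lambda|\le a\pi$ use the contour with angle $\mu'$; the pole is \emph{not} enclosed, the distance to the contour is again $\gtrsim|\lambda|$, so $E_{a,b}(\lambda)=O(|\lambda|^{-1})$ there, and the residue term may then be added back for free because $\Re(\lambda^{1/a})=|\lambda|^{1/a}\cos(\arg\lambda/a)\le|\lambda|^{1/a}\cos(\mu_1/a)<0$ makes it exponentially small. The same observation (with $\cos(2\pi/3)=-1/2$) is what reconciles the two overlapping cases of the statement on $2a\pi/3\le|\arg\lambda|\le a\pi$. With this two-angle argument inserted, your proof is complete and coincides with the proof of the cited theorem.
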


By Cauchy formula (see~\cite[Theorem 1.4.2]{olver}), 
 the 
asymptotic expansions of the $m$-th derivatives of $E_{a,b}$
 (on ``smaller''  sectors that the ones involved in~{\eqref{eqn:Eab}}) 
 can be obtained by differentiating $m$ times the terms in~{\eqref{eqn:Eab},} that is,
\begin{equation}\label{eqn:derEab}
E_{a,b}^{(m)}(\lambda)=
\begin{cases}	
\frac 1a \frac{d^m}{d \lambda^m}\left(\lambda^{(1-b)/a} e^{\lambda^{1/a}}\right)
+O(\lambda^{-1-m}),
& \text{if}\quad|\arg\lambda|\le a\frac{3\pi}4,\\
O(\lambda^{-1-m}),
& \text{if}\quad|\arg\lambda|\ge a\frac{3\pi}4.
\end{cases}
\end{equation}

\subsection{The Bergman kernel}\quad\par

The next result, which is obtained in \cite{bommier-englis-youssfi}, gives a description of the Bergman kernel. 
The main tool to compute the norm of the monomials in $F^{2}_{\alpha}$ is the identity
\[
\Gamma(x)=\int_0^\infty t^{x-1}e^{-t }dt=
2\ell \gamma^x \int_0^\infty s^{2\ell x-1}
e^{-\gamma s^{2\ell}}ds\quad (x>0,\,\gamma>0).
\]

\begin{lem}
The system 
$\bigl\{\frac{w^\nu}{\|w^\nu\|_{F^{2}_\alpha}}\bigr\}_{\nu\in
\N^n}$ is an orthonormal basis for  $F^{2}_\alpha$, so the Bergman projection from $L^2_\alpha$ onto $F^2_\alpha$ is
\[
P_\alpha f(z)=\langle f,K_{\alpha,z}\rangle_\alpha  =\int_{\C^n}f(w)K_{\alpha}(\z,w)e^{-\alpha |w|^{2\ell}}dV(w),
\]
where 
	\[
	K_\alpha(z,w)=\overline{K_{\alpha,z}(w)}
	=\sum_{\nu\in \N^n}\frac{z^\nu\overline{w}^\nu}{\|w^\nu\|^2_{F^{2}_{\alpha}}}
	\]
	is the Bergman kernel.
	Namely, since 
	$\|w^\nu\|^2_{F^{2}_{\alpha}}=
	\frac 1\ell\frac{n!\, \nu!\, \Gamma\left(\frac{|\nu|+n}\ell\right)}{(n-1+|\nu|)!}
	$,
$
K_{\alpha}(z,w)=H_{\alpha}(z\overline{w})
$, where
\[
H_{\alpha}(\lambda)
:=\frac{\ell\alpha^{n/\ell}}{n!}
\sum_{k=0}^\infty \frac{(n-1+k)!}{k!}\frac{\alpha^{k/\ell}\lambda^k}
{\Gamma\left(\frac{k+n}\ell\right)}
=\frac{\ell\alpha^{n/\ell}}{n!}
E_{1/\ell,1/\ell}^{(n-1)}(\alpha^{1/\ell}\lambda).
\]	
In particular, for any $\delta>0$ we have 
\begin{equation}\label{eqn:Kdelta}
K_{\alpha}(z,\delta w)
=\delta^{-n}K_{\alpha\delta^\ell }(z,w).
\end{equation}
\end{lem}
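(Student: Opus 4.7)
The plan splits into three natural steps: orthogonality together with the monomial norm; completeness and the resulting integral representation of $P_\alpha$; and the closed-form identification of $K_\alpha$ followed by the scaling identity.

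Orthogonality of distinct monomials in $L^2_\alpha$ follows from torus invariance of the weight $e^{-\alpha|w|^{2\ell}}dV(w)$: integrating $w^\mu\overline{w^\nu}$ over the angular variables $w_j = r_j e^{i\theta_j}$ annihilates everything unless $\mu=\nu$. To compute $\|w^\nu\|^2_{F^2_\alpha}$ I would then write $w = r\zeta$ with $r = |w|$ and $\zeta\in\Sn$, split the integral into a spherical piece that is handled by the standard identity $\int_{\Sn}|\zeta^\nu|^2\,d\sigma = \tfrac{(n-1)!\,\nu!}{(n-1+|\nu|)!}$, and a radial piece that is handled by the $\Gamma$-identity recalled in the paper with $x = (|\nu|+n)/\ell$ and $\gamma = \alpha$. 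Multiplying these produces the displayed formula for $\|w^\nu\|^2_{F^2_\alpha}$, with the $\alpha^{-(|\nu|+n)/\ell}$ factor that eventually recombines inside $H_\alpha$.

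For completeness I would expand any $f \in F^2_\alpha$ as a Taylor series $f = \sum_\nu c_\nu w^\nu$ converging uniformly on compacta; orthogonality forces $c_\nu = \langle f, w^\nu\rangle_\alpha/\|w^\nu\|^2$, and Bessel's inequality upgrades this to convergence in $L^2_\alpha$-norm. The projection formula $P_\alpha f(z) = \sum_\nu z^\nu\langle f, w^\nu\rangle_\alpha/\|w^\nu\|^2$ is then standard Hilbert space theory, and interchanging the sum with the integral (justified by Gaussian decay of the weight) yields the reproducing-kernel form with $K_\alpha(z,w) = \sum_\nu z^\nu\bar w^\nu/\|w^\nu\|^2$.

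To reach the closed form, I would group terms by $k = |\nu|$ and apply the multinomial identity $\sum_{|\nu|=k}\tfrac{k!}{\nu!}z^\nu\bar w^\nu = (z\bar w)^k$, reducing $K_\alpha$ to a one-variable series in $\lambda = z\bar w$. Identifying the resulting coefficients with those of $\tfrac{\ell\alpha^{n/\ell}}{n!}E^{(n-1)}_{1/\ell,1/\ell}(\alpha^{1/\ell}\lambda)$ is then bookkeeping: differentiate the defining series of $E_{1/\ell,1/\ell}$ term by term $n-1$ times, reindex $m = k - n + 1$, and match the $\alpha^{(k+n)/\ell}$ factors. The scaling identity $K_\alpha(z,\delta w) = \delta^{-n}K_{\alpha\delta^\ell}(z,w)$ then drops out of $K_\alpha(z,w) = H_\alpha(z\bar w)$ by comparing $H_\alpha(\delta z\bar w)$ term-by-term to $H_{\alpha\delta^\ell}(z\bar w)$: the factor $(\alpha\delta^\ell)^{n/\ell}/\alpha^{n/\ell} = \delta^n$ accounts for the mismatch. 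No single step is genuinely deep; the only place where care is required is the index bookkeeping in the Mittag-Leffler identification, where a miscount of indices or $\alpha$-powers would silently derail the closed form.
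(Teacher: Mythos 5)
Your outline is correct and follows exactly the route the paper indicates: the paper gives no proof of its own, merely citing Bommier-Hato--Engli\v{s}--Youssfi and recording the $\Gamma$-identity as the main tool for the monomial norms, and your polar-coordinates computation, multinomial regrouping, and term-by-term Mittag-Leffler identification are precisely that standard argument carried out in full. You are also right to track the factor $\alpha^{-(|\nu|+n)/\ell}$: the paper's displayed formula for $\|w^\nu\|^2_{F^{2}_{\alpha}}$ omits it, but it must be present for the stated closed form of $H_{\alpha}$ and the scaling identity $K_{\alpha}(z,\delta w)=\delta^{-n}K_{\alpha\delta^{\ell}}(z,w)$ to come out.
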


\begin{rem}\label{rem:pointwB}
In order to obtain norm estimates of the Bergman kernel it is useful to make the following change of variables.
Given $z\in\C^n$, there is a unitary mapping  $U:\C^n\to\C^n$ such that $U(z)=(|z|,0,\,\dots,0)$.
Then  
$
K_{\alpha}(w,z)=H_{\alpha}(|z|u_1)
$,
where $U(w)=(u_1,\cdots,u_n)$,
so we may assume $z=(|z|,0,\cdots,0)$.
\end{rem}

The remaining part of this section is devoted to derive
 pointwise and norm estimates of the Bergman kernel, 
which will be the key tools to obtain our main results.

The following corollaries are consequences of 
 \eqref{eqn:derEab}.

\begin{cor}\label{cor:diffE}
Let $n$ be a positive integer. For $|\lambda|\to\infty$, we have that
\[
E^{(n-1)}_{1/\ell,1/\ell}(\lambda)=
\begin{cases}	\ell^n \lambda^{n(\ell-1)}e^{\lambda^\ell}
(1+O(\lambda^{-\ell}))+O(\lambda^{-n}),
& \text{if}\quad|\arg \lambda|\le \frac{3\pi}{4\ell},\\
O(\lambda^{-n}),
& \text{if}\quad|\arg \lambda|\ge \frac{3\pi}{4\ell}.
\end{cases}
\]
\end{cor}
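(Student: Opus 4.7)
The plan is to specialize the general asymptotic formula \eqref{eqn:derEab} to the parameters $a=b=1/\ell$ and $m=n-1$. With these choices $(1-b)/a=\ell-1$, $1/a=\ell$, $-1-m=-n$, and the critical sector becomes $|\arg\lambda|\le \frac{3\pi}{4\ell}$, exactly matching the statement. Outside that sector, \eqref{eqn:derEab} gives $E^{(n-1)}_{1/\ell,1/\ell}(\lambda)=O(\lambda^{-n})$ with no further work, so all the content is in handling the interior of the sector.

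Inside the sector \eqref{eqn:derEab} reads
\[
E^{(n-1)}_{1/\ell,1/\ell}(\lambda)=\ell\,\frac{d^{n-1}}{d\lambda^{n-1}}\bigl(\lambda^{\ell-1}e^{\lambda^\ell}\bigr)+O(\lambda^{-n}).
\]
The key observation is the collapsing identity $\lambda^{\ell-1}e^{\lambda^\ell}=\frac{1}{\ell}\frac{d}{d\lambda}e^{\lambda^\ell}$, which turns the main term into $\frac{d^n}{d\lambda^n}e^{\lambda^\ell}$. What remains to check is
\[
\frac{d^n}{d\lambda^n}e^{\lambda^\ell}=\ell^n\lambda^{n(\ell-1)}e^{\lambda^\ell}\bigl(1+O(\lambda^{-\ell})\bigr)\qquad(|\lambda|\to\infty).
\]

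This I would settle by a short induction. Writing $\frac{d^n}{d\lambda^n}e^{\lambda^\ell}=P_n(\lambda)e^{\lambda^\ell}$, differentiation yields the recursion $P_{n+1}=P_n'+\ell\lambda^{\ell-1}P_n$ with $P_1(\lambda)=\ell\lambda^{\ell-1}$. A routine degree count then gives $P_n(\lambda)=\ell^n\lambda^{n(\ell-1)}+Q_n(\lambda)$ with $\deg Q_n\le n(\ell-1)-\ell$ (and $Q_n\equiv 0$ when $\ell=1$, in which case the estimate is trivial), producing exactly the claimed $O(\lambda^{-\ell})$ relative error. No serious obstacle is expected: once the identity $\lambda^{\ell-1}e^{\lambda^\ell}=\ell^{-1}(e^{\lambda^\ell})'$ has been noticed, the proof is essentially bookkeeping.
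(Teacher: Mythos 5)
Your proposal is correct and follows essentially the same route as the paper: specialize \eqref{eqn:derEab} with $a=b=1/\ell$, $m=n-1$, and then show by induction that the main term $\ell\,\frac{d^{n-1}}{d\lambda^{n-1}}\bigl(\lambda^{\ell-1}e^{\lambda^\ell}\bigr)$ equals $\ell^n\lambda^{n(\ell-1)}e^{\lambda^\ell}\bigl(1+O(\lambda^{-\ell})\bigr)$. Your collapsing identity $\lambda^{\ell-1}e^{\lambda^\ell}=\ell^{-1}\frac{d}{d\lambda}e^{\lambda^\ell}$ and the recursion for $P_n$ are just a slightly more explicit packaging of the induction the paper leaves as "easy to check" (with the understanding that for non-integer $\ell$ the "degree count" refers to the largest real exponent in a finite sum of powers of $\lambda$).
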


\begin{proof}

For $\ell=1$, $E_{1/\ell,1/\ell}(\lambda)=e^\lambda$ so $E^{(n-1)}_{1/\ell,1/\ell}(\lambda)=e^\lambda$, and the above asymptotic  identity is obvious in this case.

Next assume $\ell>1$.
	By induction on $n$ it is easy to check that
	\[
	\ell\,\frac{d^{n-1}}{d\lambda^{n-1}}\lambda^{\ell-1}
	e^{\lambda^\ell}
	=\ell^n \lambda^{n(\ell-1)}e^{\lambda^\ell}
	(1+O(\lambda^{-\ell}))
	\quad(|\lambda|\to\infty,\,|\arg\lambda|<\pi/\ell).
	\]

	By combining this identity with \eqref{eqn:derEab} we obtain the result.
	\end{proof}

\begin{cor}\label{cor:pointwB}
For any $\delta>0$ and $N>2$, let
$S^{\delta}_N:=D(0,\delta)\cup S_N$,
where
\[
S_N:=\{0\}\cup
\{\,\lambda\in\C\setminus\{0\} :
 |\arg\lambda|\le\tfrac{\pi}{N\ell}\,\}.
\]
Then there exist $\delta>0$ and $N>2$ such that 
	\begin{align}
	&|H_{\alpha}(\lambda)|
	\simeq(1+|\lambda|)^{n(\ell-1)}\,
\bigl|e^{\alpha\lambda^\ell}\bigr|
	\qquad (\lambda\in S^{\delta}_N),
     \label{eqn:simeq:estimate:H}\\	
	& |H_{\alpha}(\lambda)|\lesssim
	 (1+|\lambda|)^{n(\ell-1)}\,e^{\alpha\cos(\frac{\pi}N)|\lambda|^\ell}
	\qquad (\lambda\in \C\setminus S^{\delta}_N).
     \label{eqn:lesssim:estimate:H}	
\end{align}
In particular, 
\begin{equation}\label{eqn:rough:estimate:H}
\mathcal{X}_{S_N}(\lambda)\lesssim|H_{\alpha}(\lambda)|\lesssim
(1+|\lambda|)^{n(\ell-1)}\,e^{\alpha|\lambda|^\ell}
\qquad(\lambda\in\C).
\end{equation}
	\end{cor}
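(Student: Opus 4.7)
My plan is to reduce everything to the asymptotic expansion of Corollary \ref{cor:diffE} via the substitution $\mu=\alpha^{1/\ell}\lambda$, and then stitch the large-$|\lambda|$ asymptotic to a compactness argument that handles bounded $|\lambda|$.

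First I compute: since $\alpha>0$ one has $\arg\mu=\arg\lambda$ and $\mu^\ell=\alpha\lambda^\ell$ on sectors where the principal branch is well-defined. Combined with $n/\ell+n(\ell-1)/\ell=n$, Corollary \ref{cor:diffE} yields, as $|\lambda|\to\infty$,
\[
H_{\alpha}(\lambda)=\frac{\ell^{n+1}\alpha^{n}}{n!}\,\lambda^{n(\ell-1)}e^{\alpha\lambda^\ell}\bigl(1+O(|\lambda|^{-\ell})\bigr)+O(|\lambda|^{-n})\quad\text{if }|\arg\lambda|\le\tfrac{3\pi}{4\ell},
\]
and $H_{\alpha}(\lambda)=O(|\lambda|^{-n})$ if $|\arg\lambda|\ge\tfrac{3\pi}{4\ell}$. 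This master formula drives everything.

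For \eqref{eqn:simeq:estimate:H}, fix any $N>2$; on $S_N$ one has $|e^{\alpha\lambda^\ell}|\ge e^{\alpha\cos(\pi/N)|\lambda|^\ell}$ with $\cos(\pi/N)>0$, so the exponential main term absorbs both the multiplicative perturbation $(1+O(|\lambda|^{-\ell}))$ and the additive error $O(|\lambda|^{-n})$. Hence there is $R>0$ with $|H_\alpha(\lambda)|\simeq(1+|\lambda|)^{n(\ell-1)}|e^{\alpha\lambda^\ell}|$ on $S_N\cap\{|\lambda|\ge R\}$. To extend this estimate down to small $|\lambda|$, I would use the key observation that all Taylor coefficients of $H_\alpha$ are strictly positive, so $H_\alpha>0$ on $[0,\infty)$; by continuity $|H_\alpha|$ stays uniformly bounded below on some complex neighborhood of the compact segment $[0,R]$. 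I then choose $\delta>0$ small and $N>2$ large enough so that $\overline{S_N^\delta\cap\{|\lambda|\le R\}}$ sits inside this neighborhood. On that compact set $|H_\alpha|$ and $(1+|\lambda|)^{n(\ell-1)}|e^{\alpha\lambda^\ell}|$ are continuous positive functions, hence comparable.

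For \eqref{eqn:lesssim:estimate:H}, I split $\C\setminus S_N^\delta$ into the annular sector $\pi/(N\ell)<|\arg\lambda|\le 3\pi/(4\ell)$ and its complement. On the sector, monotonicity of $\cos$ on $[0,\pi]$ gives $|e^{\alpha\lambda^\ell}|\le e^{\alpha\cos(\pi/N)|\lambda|^\ell}$, and the master formula yields the stated bound at large $|\lambda|$; on the complement, $|H_\alpha(\lambda)|=O(|\lambda|^{-n})$ is trivially dominated. The residual compact region $(\C\setminus S_N^\delta)\cap\{|\lambda|\le R'\}$ is handled by continuity, since the condition $|\lambda|\ge\delta$ keeps the right-hand side uniformly positive. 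Finally \eqref{eqn:rough:estimate:H} is immediate: the upper bound follows from \eqref{eqn:simeq:estimate:H} and \eqref{eqn:lesssim:estimate:H} because $\cos(\pi/N)\le 1$, and the lower bound $\mathcal{X}_{S_N}(\lambda)\lesssim |H_\alpha(\lambda)|$ follows from \eqref{eqn:simeq:estimate:H} using $(1+|\lambda|)^{n(\ell-1)}\ge 1$ and $|e^{\alpha\lambda^\ell}|\ge 1$ on $S_N$. The principal obstacle is the bounded-$|\lambda|$ region, where Corollary \ref{cor:diffE} is silent; positivity of the Taylor coefficients of $H_\alpha$, together with the freedom to shrink $\delta$ and enlarge $N$, is precisely what resolves it.
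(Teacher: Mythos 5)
Your proof is correct and follows essentially the same route as the paper: the asymptotics of Corollary \ref{cor:diffE} (after rescaling $\mu=\alpha^{1/\ell}\lambda$) handle $|\lambda|\ge R$ on the two sectors, and the positivity and continuity of $H_{\alpha}$ on $[0,\infty)$, together with shrinking $\delta$ and enlarging $N$, handle the compact region near the segment $[0,R]$. No gaps.
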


\begin{proof}
Corollary~{\ref{cor:diffE}} shows that there is a large $R>0$ so that
\begin{align}
&|H_{\alpha}(\lambda)|\simeq
(1+|\lambda|)^{n(\ell-1)}\bigl|e^{\alpha\lambda^\ell}\bigr|
\qquad (|\lambda|\ge R,\,|\arg\lambda|\le\tfrac{\pi}{3\ell}),
 \label{eqn:simeq:estimate:H:0}\\
&|H_{\alpha}(\lambda)|\lesssim
(1+|\lambda|)^{n(\ell-1)} e^{\frac{\alpha}2|\lambda|^\ell}
\qquad (|\lambda|\ge R,\,|\arg\lambda|\ge\tfrac{\pi}{3\ell}).
 \label{eqn:lesssim:estimate:H:0}	
\end{align}
Since $H_{\alpha}$ is a continuous positive function on 
the interval $[0,\infty)$, we have that there exist a small 
$\delta>0$ and a large $N>2$ such that
\begin{equation} \label{eqn:simeq:estimate:H:1}
|H_{\alpha}(\lambda)|\simeq 1\simeq (1+|\lambda|)^{n(\ell-1)}\bigl|e^{\alpha\lambda^\ell}\bigr|
\qquad(\lambda\in S^{\delta}_N,\,|\lambda|\le R).
\end{equation}	
Therefore~{\eqref{eqn:simeq:estimate:H}} directly follows 
from~{\eqref{eqn:simeq:estimate:H:0}}
 and~{\eqref{eqn:simeq:estimate:H:1}}.
Moreover, \eqref{eqn:lesssim:estimate:H} is deduced from~{\eqref{eqn:simeq:estimate:H:0}}, \eqref{eqn:lesssim:estimate:H:0} and the fact that $H_{\alpha}$ is bounded on $D(0,R)$.  	
\end{proof}

As an immediate consequence of the above 
results we obtain the following pointwise estimate for
 the Bergman kernel.

\begin{prop}\label{prop:pointK}
There exist $\delta>0$ and $N>2$ such that
	\begin{align}
	&|K_{\alpha}(w,z)|
	\simeq(1+|z\overline w|)^{n(\ell-1)}\,e^{\alpha\Re((z\overline w)^\ell)}
	\quad (z\overline w\in S^{\delta}_N),
     \label{eqn:simeq:estimate:K}\\	
	& |K_{\alpha}(w,z)|\lesssim
	 (1+|z\overline w|)^{n(\ell-1)}\, e^{\alpha\cos(\frac{\pi}N)|z\overline w|^\ell}
	\quad (z\overline w\in \C\setminus S^{\delta}_N).
     \label{eqn:lesssim:estimate:K}	
\end{align}
\end{prop}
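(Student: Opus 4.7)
The plan is to deduce the proposition directly from Corollary~\ref{cor:pointwB} by the substitution $\lambda=z\overline{w}$. The only nontrivial preliminary observation I would highlight is that, although the proposition records the estimate for $|K_\alpha(w,z)|$ (with the arguments swapped relative to the definition $K_\alpha(z,w)=H_\alpha(z\overline{w})$), the symmetry $K_\alpha(w,z)=\overline{K_\alpha(z,w)}$ of the Bergman kernel gives $|K_\alpha(w,z)|=|K_\alpha(z,w)|=|H_\alpha(z\overline{w})|$. Equivalently, since the Taylor coefficients of $H_\alpha$ are real, $H_\alpha(\overline{\lambda})=\overline{H_\alpha(\lambda)}$, and then $K_\alpha(w,z)=H_\alpha(w\overline{z})=H_\alpha(\overline{z\overline{w}})=\overline{H_\alpha(z\overline{w})}$ has modulus $|H_\alpha(z\overline{w})|$.

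Once this is in place, I would simply plug $\lambda:=z\overline{w}$ into the two inequalities \eqref{eqn:simeq:estimate:H} and \eqref{eqn:lesssim:estimate:H} from Corollary~\ref{cor:pointwB}. The factor $(1+|\lambda|)^{n(\ell-1)}$ becomes $(1+|z\overline{w}|)^{n(\ell-1)}$, and
\[
\bigl|e^{\alpha\lambda^{\ell}}\bigr|=e^{\alpha\Re(\lambda^{\ell})}=e^{\alpha\Re((z\overline{w})^{\ell})},
\]
which produces exactly the right-hand side of \eqref{eqn:simeq:estimate:K} in the regime $z\overline{w}\in S^{\delta}_{N}$. Similarly, $e^{\alpha\cos(\pi/N)|\lambda|^{\ell}}$ becomes $e^{\alpha\cos(\pi/N)|z\overline{w}|^{\ell}}$ in the complementary regime, giving \eqref{eqn:lesssim:estimate:K}. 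The constants $\delta,N$ furnished by Corollary~\ref{cor:pointwB} are transferred verbatim.

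There is no real obstacle here: all of the analytical work (asymptotics of the Mittag-Leffler function on sectors, matching at the finite-distance boundary $|\lambda|\le R$, choice of $\delta$ and $N$) has been carried out in Corollaries~\ref{cor:diffE} and~\ref{cor:pointwB}. The proposition is essentially a renaming of variables together with the observation $|K_\alpha(w,z)|=|H_\alpha(z\overline{w})|$, so I expect the proof to be only a few lines.
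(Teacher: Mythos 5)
Your proposal is correct and is exactly the paper's argument: the proposition is stated there as an immediate consequence of Corollary~\ref{cor:pointwB} via the identity $K_{\alpha}(z,w)=H_{\alpha}(z\overline{w})$ and the substitution $\lambda=z\overline{w}$. Your extra remark that $|K_\alpha(w,z)|=|H_\alpha(z\overline w)|$ by the Hermitian symmetry of the kernel (or the realness of the Taylor coefficients of $H_\alpha$) is a correct justification of a step the paper leaves implicit.
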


Now we state norm estimates for the Bergman kernel.

\begin{prop}\label{prop:pnormBergman}
	Let $1\le p\le\infty$. Then
	\[
	\|K_{\alpha}(\cdot,z)\|_{F^{p}_{\alpha,\rho}}\simeq (1+|z|)^{\rho +2n(\ell-1)/p'}e^{\frac{\alpha}2|z|^{2\ell}}
	\quad(z\in\C^n).
	\]
\end{prop}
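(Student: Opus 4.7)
The plan is to combine the pointwise kernel estimates of Proposition~\ref{prop:pointK} with a Laplace-type analysis of the integral defining the norm, after using the unitary invariance in Remark~\ref{rem:pointwB}. Fix $z\in\C^n$, write $s:=|z|$, and assume $z=(s,0,\ldots,0)$ so that $z\overline w=s\overline{w_1}$. Writing $w_1=re^{i\theta}$ and $|w'|=t$ for $w'\in\C^{n-1}$, one has $|w|^2=r^2+t^2$, $|z\overline w|=sr$ and $\arg(z\overline w)=-\theta$; integrating out the angular variables in $\C^{n-1}$ reduces the computation of $\|K_\alpha(\cdot,z)\|_{F^p_{\alpha,\rho}}^p$, for $1\le p<\infty$, to a three-fold integral in $(r,t,\theta)$ against the weight $r\,t^{2n-3}\,dr\,dt\,d\theta$ (the factor $t^{2n-3}$ disappears when $n=1$, but the argument below adapts verbatim).

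For the upper bound I split the domain according to whether $sre^{-i\theta}\in S_N^\delta$ or not. On the complement, \eqref{eqn:lesssim:estimate:K} gives the exponential factor $e^{\alpha p\cos(\pi/N)(sr)^\ell-\frac{\alpha p}{2}(r^2+t^2)^\ell}$; maximising its exponent in $r$ (with $t=0$) yields a bound of order $e^{\frac{\alpha p}{2}\cos^2(\pi/N)s^{2\ell}}$, which is exponentially smaller than $e^{\frac{\alpha p}{2}s^{2\ell}}$ since $\cos(\pi/N)<1$, so this region is negligible. On the main region \eqref{eqn:simeq:estimate:K} applies and the problem reduces to controlling
\[
\int (1+sr)^{np(\ell-1)}(1+\sqrt{r^2+t^2})^{\rho p}\,e^{\alpha p\,\varphi(r,t,\theta)}\,r\,t^{2n-3}\,dr\,dt\,d\theta,
\]
where $\varphi(r,t,\theta):=(sr)^\ell\cos(\ell\theta)-\tfrac12(r^2+t^2)^\ell$ attains its global maximum $s^{2\ell}/2$ at $(s,0,0)$ with Taylor expansion
\[
\varphi(s+u,t,\theta)=\tfrac{s^{2\ell}}{2}-\tfrac{\ell^2}{2}s^{2\ell}\theta^2-\tfrac{\ell^2}{2}s^{2\ell-2}u^2-\tfrac{\ell}{2}s^{2\ell-2}t^2+\text{h.o.t.}
\]
Gaussian integration of the quadratic part produces factors of order $s^{-\ell}$ in $\theta$, $s^{-(\ell-1)}$ in $u=r-s$, and, via the substitution $v=s^{\ell-1}t$ in the measure $t^{2n-3}dt$, $s^{-2(n-1)(\ell-1)}$ in $t$, for a combined factor $s^{-2n(\ell-1)}$. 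Multiplying by the polynomial prefactors at the critical point, $(1+sr)^{np(\ell-1)}\simeq s^{2np(\ell-1)}$ and $(1+\sqrt{r^2+t^2})^{\rho p}\simeq s^{\rho p}$, yields a bound of order $e^{\frac{\alpha p}{2}s^{2\ell}}\,s^{\rho p+2n(\ell-1)(p-1)}$; extracting the $p$-th root gives the claim, using $(p-1)/p=1/p'$. The matching lower bound follows by restricting the integration to a small neighbourhood of $(s,0,0)$ and using the lower inequality in \eqref{eqn:simeq:estimate:K}.

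For $p=\infty$ one uses the rough bound \eqref{eqn:rough:estimate:H} together with the elementary inequality $|z|^\ell|w|^\ell\le\tfrac12(|z|^{2\ell}+|w|^{2\ell})$, which rewrites as
\[
\alpha(|z||w|)^\ell-\tfrac{\alpha}{2}|w|^{2\ell}=\tfrac{\alpha}{2}|z|^{2\ell}-\tfrac{\alpha}{2}(|w|^\ell-|z|^\ell)^2\le\tfrac{\alpha}{2}|z|^{2\ell},
\]
so that the supremum is bounded by $(1+|z|)^{\rho+2n(\ell-1)}e^{\frac{\alpha}{2}|z|^{2\ell}}$. The matching lower bound is realized at $w=z$: since $|z|^2\in S_N^\delta$, \eqref{eqn:simeq:estimate:H} yields $|K_\alpha(z,z)|\simeq(1+|z|^2)^{n(\ell-1)}e^{\alpha|z|^{2\ell}}$, which matches the claim as $2n(\ell-1)/p'=2n(\ell-1)$ when $p=\infty$. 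The main obstacle is the Laplace-type analysis near $(s,0,0)$: one must propagate the polynomial prefactor $(1+sr)^{np(\ell-1)}$ and the measure factor $t^{2n-3}$ through the Gaussian computation uniformly in $s$, and verify that the Taylor remainders in $\varphi$ are genuinely negligible on the natural Gaussian window $|\theta|\lesssim s^{-\ell}$, $|r-s|\lesssim s^{-(\ell-1)}$, $t\lesssim s^{-(\ell-1)}$. Once these scale-matching checks are in place, the bookkeeping produces exactly the announced exponent $\rho+2n(\ell-1)/p'$.
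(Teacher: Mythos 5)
Your computation is correct and lands on the right exponents, but it is organized differently from the paper's proof, and the difference matters for how much work is left. The paper also reduces to $z=(|z|,0,\dots,0)$, but then (i) integrates out $w'\in\C^{n-1}$ first via the closed-form estimate of Lemma~\ref{lem:est:Cm}, which is exactly your $t$-integration packaged as a lemma, and (ii) on the main sector performs the substitution $v=u_1^{\ell}$, under which the exponent becomes \emph{exactly} $-\tfrac{\alpha p}{2}\,\bigl|v-|z|^{\ell}\bigr|^{2}$, so the remaining planar integral is handled by the exact Gaussian estimates $I_{a,b}$, $J_{a,b}$ of Lemma~\ref{lem:estuv} with no Taylor expansion and no remainder to control. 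Your route instead runs a genuine three-variable Laplace method around $(s,0,0)$; the Hessian you compute is correct and the bookkeeping ($s^{-\ell}$ in $\theta$, $s^{-(\ell-1)}$ in $u$ and in $t$, the factor $s$ from $r\,dr$, and the measure factor $s^{-(2n-2)(\ell-1)}$ from $t^{2n-3}dt$) does produce $\rho p+2n(\ell-1)(p-1)$, but the step you yourself flag as ``the main obstacle'' --- uniform negligibility of the Taylor remainders on the Gaussian window and of the tails outside it, for all $\ell\ge1$ and uniformly in $s$ --- is precisely the part that the change of variables $v=u_1^{\ell}$ renders unnecessary, since $\cos(\ell\theta)$ and $(sr)^\ell$ are absorbed into $\Re v$ exactly on the sector $|\arg u_1|\le\pi/(N\ell)$. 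Two smaller points: on the off-sector region you bound the integrand by the maximum of its exponent, but to conclude the region is negligible you still need to integrate the residual decay (the paper does this with $J_{a,b}$); and your $p=\infty$ upper bound quietly uses that $\sup_{x\ge0}(1+x)^{\beta}e^{-c(x-a)^{2}}\simeq(1+a)^{\beta}$, which is Lemma~\ref{lem:est:sup} and should be stated or proved. None of these is a wrong turn, but as written the proof defers its hardest verification; adopting the substitution $v=u_1^{\ell}$ would close it cleanly.
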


This estimate for $1\le p<\infty$ and $\rho=0$ is stated without a 
 detailed proof in 
\cite[Section 8.1]{bommier-englis-youssfi}. 
Since this norm estimate of the Bergman kernel is essential in order to obtain 
our main theorems and it is deduced from several non-trivial technical results, 
we include its proof.
The main tool  is the pointwise estimate of 
$H_{\alpha}$ given in Corollary~{\ref{cor:pointwB}}, 
but we also need the following three 
technical lemmas. 

\begin{lem} \label{lem:est:sup}
Let $\alpha>0$ and let $\beta\in\R$. Then 
\[
\sup_{x\ge0}\,(1+x)^{\beta}e^{-\alpha(x-a)^2}\simeq (1+a)^{\beta}\qquad(a\ge0).
\]
\end{lem}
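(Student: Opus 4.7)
The lower bound is immediate: substituting $x=a\ge 0$ into the expression $(1+x)^\beta e^{-\alpha(x-a)^2}$ yields exactly $(1+a)^\beta$, so $\sup_{x\ge 0}(1+x)^\beta e^{-\alpha(x-a)^2}\ge(1+a)^\beta$.

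For the upper bound, I would normalize and reduce matters to a uniform estimate by writing
\[
(1+x)^\beta e^{-\alpha(x-a)^2}=(1+a)^\beta\, r(x,a),\qquad r(x,a):=\Bigl(\tfrac{1+x}{1+a}\Bigr)^{\!\beta}e^{-\alpha(x-a)^2},
\]
and showing that $r(x,a)\lesssim 1$ uniformly for $x,a\ge 0$. I would then split on the sign of $\beta$. If $\beta\ge 0$, for $x\le a$ both factors of $r$ are at most $1$, so $r\le 1$; for $y:=x-a>0$, the inequality $(1+x)/(1+a)=1+y/(1+a)\le 1+y$ gives $r(x,a)\le (1+y)^\beta e^{-\alpha y^2}$, which is bounded uniformly in $y\ge 0$. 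If $\beta<0$, for $x\ge a$ the ratio is $\ge 1$ and its $\beta$-power is $\le 1$, so $r\le 1$; for $x\in[a/2,a]$ we have $1+x\ge (1+a)/2$, which gives $((1+x)/(1+a))^\beta\le 2^{|\beta|}$ and hence $r\le 2^{|\beta|}$.

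The only case requiring any real argument is $\beta<0$ and $x\in[0,a/2]$, where the ratio $(1+x)/(1+a)$ can be as small as $1/(1+a)$. Here I would use $(x-a)^2\ge a^2/4$ together with $((1+x)/(1+a))^\beta\le (1+a)^{|\beta|}$ to conclude
\[
r(x,a)\le (1+a)^{|\beta|}\,e^{-\alpha a^2/4},
\]
and then observe that the right-hand side is a continuous function of $a\in[0,\infty)$ tending to $0$ as $a\to\infty$, hence uniformly bounded. This polynomial-beaten-by-Gaussian calibration is the main (and only) obstacle; everything else amounts to reading off elementary pointwise inequalities from the normalization.
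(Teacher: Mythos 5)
Your proof is correct and follows essentially the same route as the paper: both take the lower bound from $x=a$, split the half-line near $x\approx a/2$, and in the region where $x$ is far below $a$ use the fact that the Gaussian factor $e^{-\alpha a^2/4}$ absorbs the polynomial loss $(1+a)^{|\beta|}$. The only cosmetic difference is that the paper treats the region near $a$ uniformly in the sign of $\beta$ via the substitution $t=(x-a)/(1+a)$ and the bound $e^{-(x-a)^2}\le e^{-t^2}$, whereas you split by the sign of $\beta$; both are equally elementary.
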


\begin{proof} 
Since
$(1+x)^{\beta}e^{-\alpha(x-a)^2}=
((1+x)^{\beta/\alpha}e^{-(x-a)^2})^{\alpha}$, for any $a,x\ge0$,
 we may assume that $\alpha=1$. 
Then it is clear that
$\sup_{x\ge0}\,(1+x)^{\beta}e^{-(x-a)^2}\ge (1+a)^{\beta}$,
 for every $a\ge0$, and so we only have to prove that
\[
(1+x)^{\beta}e^{-(x-a)^2}\lesssim (1+a)^{\beta}\qquad(a,x\ge0).
\]
Let $x\ge0$. If $a-(1+a)/2\le x$ then $-1/2\le(x-a)/(1+a)$ and so, for any $\beta\in\R$,
\[
(1+x)^{\beta}e^{-(x-a)^2}
\le (1+a)^{\beta}\,\Bigl(1+\frac{x-a}{1+a}\Bigr)^{\beta}
          e^{-\left(\frac{x-a}{1+a}\right)^2} 
\le  (1+a)^{\beta}
\sup_{t\ge-1/2}(1+t)^{\beta}e^{-t^2}.           
\]

Next assume $x\le a-(1+a)/2$. If $\beta<0$ then 
\[
(1+x)^{\beta}e^{-(x-a)^2}\le e^{-(x-a)^2}\le e^{-\frac14(1+a)^2}
\le (1+a)^{\beta}\, \sup_{t\ge1} t^{-\beta}e^{-t^2/4}.
\] 
Finally, if $\beta\ge0$  then 
$(1+x)^{\beta}e^{-(x-a)^2}\le(1+x)^ {\beta}\le(1+a)^{\beta}$.
\end{proof}	

\begin{lem} \label{lem:est:Cm}
	Let $a>0$ and let $b\in\R$. Then
	\[
	\int_{\C^{n-1}} (1+y+|w|)^b
	e^{-a(y^2+|w|^2)^{\ell}} dV_{n-1}(w)
	\simeq
	(1+y)^{b-2(n-1)(\ell-1)}
	e^{-a\,y^{2\ell}}\quad (y\ge 0).	
	\]
\end{lem}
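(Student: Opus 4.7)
My plan is to reduce the multi-dimensional integral to a one-variable integral by passing to polar coordinates in $\C^{n-1}$, treat the trivial case $n=1$ separately (where $\C^{0}$ is a single point and both sides reduce to $(1+y)^b e^{-ay^{2\ell}}$), and then analyse the resulting radial integral by separating the regime of bounded $y$ from the regime $y\to\infty$. In polar form the left-hand side becomes a positive constant times
\[
\int_0^\infty (1+y+r)^b\, r^{2n-3}\, e^{-a(y^2+r^2)^\ell}\, dr,
\]
which is a continuous positive function of $y\ge 0$; for $y$ in any bounded interval both this integral and the right-hand side are comparable to $1$. Hence it remains to treat $y\ge 1$.

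For $y\ge 1$ I would perform the rescaling $r=yt$. This turns the exponent into $a y^{2\ell}(1+t^2)^\ell$ and produces the factor $y^{2n-2}(1+y+yt)^b\simeq y^{b+2n-2}(1+t)^b$, since $1+y(1+t)\simeq y(1+t)$ uniformly for $y\ge 1$ and $t\ge 0$. The claim is therefore equivalent to
\[
\int_0^\infty (1+t)^b\, t^{2n-3}\, e^{-a y^{2\ell}(1+t^2)^\ell}\, dt \simeq y^{-2\ell(n-1)}\, e^{-a y^{2\ell}}\qquad (y\ge 1),
\]
a one-dimensional Laplace-type estimate whose effective peak sits at $t=0$.

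For the upper bound I would use Bernoulli's inequality $(1+t^2)^\ell\ge 1+\ell t^2$ (valid since $\ell\ge 1$) to dominate the exponential by $e^{-ay^{2\ell}}e^{-a\ell y^{2\ell} t^2}$, and handle the polynomial factor via the crude bound $(1+t)^b\lesssim 1+t^{|b|}$, which works for either sign of $b$. After the Gaussian substitution $u=\sqrt{a\ell}\,y^\ell t$ the resulting integrals evaluate to constant multiples of $y^{-2\ell(n-1)}e^{-ay^{2\ell}}$. For the matching lower bound I would restrict to the shrinking window $t\in[0,y^{-\ell}]$; there $(1+t^2)^\ell\le (1+y^{-2\ell})^\ell$ is bounded, so $e^{-ay^{2\ell}(1+t^2)^\ell}\gtrsim e^{-ay^{2\ell}}$, the factor $(1+t)^b$ is comparable to $1$, and integration of $t^{2n-3}$ over this interval gives $\simeq y^{-2\ell(n-1)}$.

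I expect the main subtlety to lie in keeping the polynomial factor $(1+y+r)^b$ under control simultaneously for both signs of $b$, and in verifying that the truncated Gaussian integral used in the lower bound is of the same order as the full one rather than a strictly smaller quantity. These issues are cleanly resolved by the rescaling $r=yt$, which concentrates the effective mass of the integrand near $t=0$ uniformly in $y$, so that on the decisive part of the integration region the factor $(1+t)^b$ is pinned above and below by universal constants.
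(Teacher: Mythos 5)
Your proposal is correct and follows essentially the same route as the paper: reduction to a radial integral, the rescaling $r=yt$ for $y\ge 1$, the inequality $(1+t^2)^\ell\ge 1+\ell t^2$ for the upper bound, and a lower bound obtained by restricting to a window of width $\simeq y^{-\ell}$ around $t=0$ (the paper uses the root $t_y$ of $y^{2\ell}((1+t_y^2)^\ell-1)=1$, which is $\simeq y^{-\ell}$). The only cosmetic differences are that the paper factors $e^{-ay^{2\ell}}$ out of the exponent before estimating and splits the upper-bound integral at $t=1$ rather than using $(1+t)^b\lesssim 1+t^{|b|}$.
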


\begin{proof} It is clear that the estimate of the statement holds
 for $0\le y\le 1$. Thus, by integration in polar coordinates, we 
 only have to prove that 
	  	\[
	  	I(y):=
	  \int_{0}^\infty (y+r)^b
	  e^{-a(y^2+r^2)^{\ell}}\,r^{2n-3}dr
	  \simeq
	  y^{b-2(n-1)(\ell-1)}
	  e^{-a\,y^{2\ell}}\quad 	(y\ge 1).
	  \]
The change of variables $r=yt$ shows that 
$I(y)\simeq y^{b+2(n-1)}\,e^{-ay^{2\ell}} J(y)$, where
\[
J(y):= 
\int_{0}^\infty (1+t)^b
e^{-ay^{2\ell}((1+t^2)^{\ell}-1)}\,t^{2n-3}dt.	
\]
We obtain the lower estimate for $I(y)$ by considering the root 
 $t_y>0$ of the equation $y^{2\ell}((1+t^2)^\ell-1)=1$, that is, 
	  \[
	  t_y=\big((1+y^{-2\ell})^{1/\ell}-1\big)^{1/2}
\simeq y^{-\ell},
	  \]
and observing that 
\[	  
J(y)\ge \int_{0}^{t_y} 
(1+t)^b e^{-ay^{2\ell}((1+t^2)^{\ell}-1)}\,t^{2n-3}dt
\simeq \int_{0}^{t_y} t^{2n-3}dt\simeq y^{-2(n-1)\ell}.
\]
	  
In order to get the upper estimate, note that if 
$\ell\ge 1$ then $(1+t^2)^\ell-1\ge \ell t^2$, and so
\[	  
J(y)
\le \int_{0}^{\infty}(1+t)^b e^{-a\ell y^{2\ell}t^2}\,t^{2n-3}dt
\le 2^{\max(b,0)}(J_1(y)+J_2(y)),
\] 
where
\[
J_1(y):= \int_0^1 e^{-a\ell y^{2\ell}t^2}\,t^{2n-3}dt
\quad\text{and}\quad
J_2(y):=\int_1^\infty  e^{-a\ell y^{2\ell}t^2}\,t^{2n-3+b}dt.
	 \]
By making the change of variables $s=y^\ell t$, we have that
\begin{gather*}
J_1(y)=y^{-2(n-1)\ell}\int_0^{y^{\ell}}e^{-a\ell s^2}s^{2n-3}\,ds
\lesssim y^{-2(n-1)\ell}\qquad\mbox{and}\\
J_2(y) = \, y^{-(2n-2+b)\ell}
\int_{y^{\ell}}^{\infty} e^{-a\ell s^2} s^{2n-3+b} ds
\lesssim 
y^{-(2n-2+b)\ell}\int_{y^{\ell}}^{\infty} e^{-a\ell s} ds
\lesssim y^{-2(n-1)\ell},		 
\end{gather*}
which ends the proof. 
	\end{proof}

\begin{lem}\label{lem:estuv}
Let $a>0$ and let $b\in\R$. Then
\[
I(z)=I_{a,b}(z):=\int_\C\frac{e^{-a|v-z|^2}}{(1+|v|)^b}\,dA(v)
\simeq \frac{1}{(1+|z|)^{b}} \qquad(z\in\C)
\]
and
\[
J(z)=J_{a,b}(z):=\int_\C\frac{e^{-a(|v|-|z|)^2}}{(1+|v|)^b}\,dA(v)
\simeq \frac{1}{(1+|z|)^{b-1}} \qquad(z\in\C).
\]
\end{lem}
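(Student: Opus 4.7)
The plan is to use Lemma~\ref{lem:est:sup} to peel off the factor $(1+|z|)^{-b}$ from the integrand, leaving an integrable Gaussian whose integral can be computed directly.

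For the upper bound on $I(z)$, I will split the exponential as $e^{-a|v-z|^2}=e^{-(a/2)|v-z|^2}\cdot e^{-(a/2)|v-z|^2}$ and apply Lemma~\ref{lem:est:sup} with $\beta=-b$, $\alpha=a/2$ and the choice $x=|v|$, $a_{\mathrm{lem}}=|z|$, yielding $(1+|v|)^{-b}e^{-(a/2)(|v|-|z|)^2}\lesssim(1+|z|)^{-b}$; since $|v-z|\ge\bigl||v|-|z|\bigr|$, this forces $(1+|v|)^{-b}e^{-(a/2)|v-z|^2}\lesssim(1+|z|)^{-b}$. Multiplying by the leftover Gaussian and integrating,
\[
I(z)\lesssim(1+|z|)^{-b}\int_\C e^{-(a/2)|v-z|^2}\,dA(v)\lesssim(1+|z|)^{-b}.
\]
For the lower bound, I restrict to the disc $|v-z|\le1$, where $(1+|v|)\simeq(1+|z|)$, so that $I(z)\gtrsim(1+|z|)^{-b}\int_{|v-z|\le1}e^{-a|v-z|^2}\,dA(v)\simeq(1+|z|)^{-b}$.

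The estimate for $J(z)$ follows the same template. Applying Lemma~\ref{lem:est:sup} directly (no need to replace $|v-z|$ by $||v|-|z||$ now),
\[
(1+|v|)^{-b}e^{-a(|v|-|z|)^2}\lesssim(1+|z|)^{-b}e^{-(a/2)(|v|-|z|)^2},
\]
so that $J(z)\lesssim(1+|z|)^{-b}\int_\C e^{-(a/2)(|v|-|z|)^2}\,dA(v)$. In polar coordinates the remaining integral is $\simeq\int_0^\infty e^{-(a/2)(r-|z|)^2}\,r\,dr$, and the change of variables $r=|z|+s$ (treating $|z|\le1$ and $|z|>1$ separately) shows this quantity is $\simeq 1+|z|$, giving $J(z)\lesssim(1+|z|)^{-(b-1)}$. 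For the matching lower bound, I restrict to the annulus $\bigl\{v:\,\bigl||v|-|z|\bigr|\le1\bigr\}$, where the integrand is $\simeq(1+|z|)^{-b}$ and whose Lebesgue area is $\simeq1+|z|$.

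There is no real obstacle here: once Lemma~\ref{lem:est:sup} is in place, each bound reduces to a one-line Gaussian estimate. The only minor technical point is ensuring the area of the matching annulus (for the lower bound on $J$) genuinely grows like $1+|z|$ uniformly, including for small $|z|$; this is handled by the usual split into the cases $|z|\le1$ and $|z|\ge1$, where for $|z|\le1$ both sides of the claimed asymptotics are comparable to constants.
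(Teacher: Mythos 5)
Your proof is correct, but it takes a different route from the paper's. The paper first reduces to $a=1$ by scaling, disposes of $|z|\le1$ directly, and then splits $\C$ into the three regions $\{|v|<|z|/2\}$, $\{|z|/2\le|v|\le2|z|\}$, $\{|v|>2|z|\}$, showing that the middle annulus carries all the mass and bounding the outer two pieces by crude exponential decay. You instead recycle Lemma~\ref{lem:est:sup}: writing the Gaussian as a product of two half-Gaussians, you use the sup-bound $(1+|v|)^{-b}e^{-(a/2)(|v|-|z|)^2}\lesssim(1+|z|)^{-b}$ (together with $|v-z|\ge\bigl||v|-|z|\bigr|$ in the case of $I$) to peel off the weight, leaving a pure Gaussian integral that is $O(1)$ for $I$ and $O(1+|z|)$ for $J$; the matching lower bounds come from a unit disc around $z$ and a unit-width annulus around $|v|=|z|$, respectively, which is essentially what the paper's $S_2$ estimate does in disguise. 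Your argument is shorter and more modular, since Lemma~\ref{lem:est:sup} is already proved in the paper (though the paper only invokes it in the $p=\infty$ case of Proposition~\ref{prop:pnormBergman}, not here); the paper's decomposition is self-contained and makes the localization of the integral more explicit. All steps in your version check out, including the uniform treatment of small $|z|$ for the area of the annulus in the lower bound for $J$.
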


\begin{proof}
Since $I_{a,b}(z)\simeq I_{1,b}(za^{1/2})$ and 
$J_{a,b}(z)\simeq I_{1,b}(za^{1/2})$, we may assume that $a=1$.
Moreover, 
$I(z)\simeq 1\simeq J(z)$, when $|z|\le1$, so
we only have to prove the estimates for $|z|\ge 1$.
In this case we split each of the above integrals into the
corresponding three integrals on the 
sets~{$S_1=\{v\in\C: |v|<|z|/2\}$,}
$S_2=\{v\in\C: |z|/2\le|v|\le 2|z|\}$
and $S_3=\{v\in\C: |v|> 2|z|\}$, that is, 
$I(z)=I_1(z)+I_2(z)+I_3(z)$ and $J(z)=J_1(z)+J_2(z)+J_3(z)$, 
where
\[
I_k(z):=\int_{S_k}\frac{e^{-|v-z|^2}}{(1+|v|)^b}\,dA(v)
\quad\mbox{and}\quad
J_k(z):=\int_{S_k}\frac{e^{-(|v|-|z|)^2}}{(1+|v|)^b}\,dA(v).
\]

If $v\in S_1$ then $|v-z|\ge|z|-|v|>|z|/2$. Thus
\[
I_1(z)\le J_1(z)\lesssim e^{-|z|^2/4}\int_0^{|z|/2}\frac{r\,dr}{(1+r)^b}
\lesssim e^{-|z|^2/4}(1+|z|)^{|b|+2}
\lesssim\frac1{(1+|z|)^b}.
\]

If $v\in S_2$ then $(1+|z|)/2\le1+|v|\le2(1+|z|)$. Therefore
\[
I_2(z)\simeq\frac1{(1+|z|)^b}\int_{S_2}e^{-|v-z|^2}dA(v)
\,\mbox{ and }\,      
J_2(z)\simeq\frac1{(1+|z|)^b}\int_{S_2}e^{-(|v|-|z|)^2}dA(v).
\]
Since $D(z,1/2)\subset S_2$, we have
\[
0 < \int_{D(0,1/2)}e^{-|w|^2}dA(w)
\le \int_{S_2}e^{-|v-z|^2}dA(v)
\le \int_{\C}e^{-|w|^2}dA(w)<\infty,
\]
and so $I_2(z)\simeq(1+|z|)^{-b}$. On the other hand, $J_2(z)\simeq(1+|z|)^{1-b}$ because
\[
\int_{S_2}e^{-(|v|-|z|)^2}dA(v)
\simeq\int_{|z|/2}^{2|z|}e^{-(r-|z|)^2}r\,dr
\simeq|z|\int_{-|z|/2}^{|z|}e^{-t^2}dt
\simeq|z|.
\]

If $v\in S_3$ then $|v-z|\ge|v|-|z|>|v|/2 $, and hence
\[
I_3(z)\le J_3(z)
\lesssim \int_{2|z|}^\infty \frac{re^{-r^2/4}}{(1+r)^b}\,dr
\le e^{-|z|^2/2}\int_0^\infty \frac{re^{-r^2/8}}{(1+r)^b}\,dr
\lesssim \frac{1}{(1+|z|)^{b}}.\qedhere
\]
\end{proof}

\begin{proof}[Proof of 
 Proposition~{\ref{prop:pnormBergman}}]
Let $p=\infty$. Then the lower estimate follows 
from~{\eqref{eqn:simeq:estimate:H}:}
\begin{align*}
\|K_{\alpha}(\cdot,z)\|_{F^{\infty}_{\alpha,\rho}}
\ge &\, 
K_\alpha(z,z)\,(1+|z|)^\rho\, e^{-\frac\alpha 2|z|^{2\ell}}
= H_{\alpha}(|z|^2)\,
  (1+|z|)^\rho\, e^{-\frac\alpha 2|z|^{2\ell}}\\
\gtrsim &\,(1+|z|^2)^{n(\ell-1)}\,  
            (1+|z|)^\rho\, e^{\frac\alpha 2|z|^{2\ell}}
\simeq  (1+|z|)^{\rho+2n(\ell-1)}\, e^{-\frac\alpha 2|z|^{2\ell}}.
\end{align*}
In order to obtain the upper estimate, first note that \eqref{eqn:rough:estimate:H} and the Cauchy-Schwarz inequality (that is, $|z\overline{w}|\le|z||w|$, for any $z,w\in\C^n$) show that 
\begin{align*}
|K_{\alpha}(w,z)|=|H_{\alpha}(z\overline{w})|
\lesssim &\, (1+|z\overline{w}|)^{n(\ell-1)}\,
         e^{\alpha|z\overline{w}|^{\ell}} \\
\lesssim &\, (1+|z|)^{n(\ell-1)}(1+|w|)^{n(\ell-1)}\,
        e^{\alpha|z|^{\ell}|w|^{\ell}}.
\end{align*}
Therefore 
$\|K_{\alpha}(\cdot,z)\|_{F^{\infty}_{\alpha,\rho}}
\lesssim 
   (1+|z|)^{n(\ell-1)}\,e^{\frac{\alpha}2|z|^{2\ell}} M(|z|)$,
where
\[
M(|z|)
=\sup_{w\in\C}\,(1+|w|)^{\rho+n(\ell-1)}
	    e^{-\frac{\alpha}2(|w|^{\ell}-|z|^\ell)^2}
\simeq\sup_{x\ge0}\,(1+x)^{\frac{\rho+n(\ell-1)}{\ell}}	       
              e^{-\frac{\alpha}2(x-|z|^\ell)^2}.
\]
Since, by Lemma~{\ref{lem:est:sup}}, 
$M(|z|) \simeq (1+|z|^{\ell})^{\frac{\rho+n(\ell-1)}{\ell}}
        \simeq (1+|z|)^{\rho+n(\ell-1)}$,
we have just proved the upper estimate in this case.
	
Now assume that $p<\infty$.	
By making the change of variables $u=Uw$, where $U:\C^n\to\C^n$ 
is a unitary mapping such that $U(z)=(|z|,0\,\dots,0)$,
 we get that
\[
\|K_{\alpha}(\cdot,z)\|^p_{F^{p}_{\alpha,\rho}}
\simeq
\int_{\C} |H_{\alpha}(|z|u_1)|^p\,\Psi(u_1)\,dA(u_1),
\]
where
\[
\Psi(u_1):=	\int_{\C^{n-1}} (1+|u_1|+|u'|)^{\rho p}\,
e^{-\frac{\alpha p}{2}(|u_1|^2+|u'|^2)^{\ell}}\,dV_{n-1}(u').
\]
Then Lemma \ref{lem:est:Cm} implies that
\begin{equation}\label{eqn:bergman:kernel:norm:estimate}\begin{split}
&\|K_{\alpha}(\cdot,z)\|^p_{F^{p}_{\alpha,\rho}}\\ &\quad\simeq
\int_{\C} |H_{\alpha}(|z|u_1)|^p\, 
(1+|u_1|)^{\rho p-2(n-1)(\ell-1)}\,
e^{-\frac{\alpha p}2\,|u_1|^{2\ell}}\,dA(u_1).
\end{split}
\end{equation}
Now pick $N>2$ satisfying the statement of 
Corollary~{\ref{cor:pointwB}}. 
Then note that~{\eqref{eqn:rough:estimate:H}}
implies 
\[
\mathcal{X}_{S_N}(u_1)
\lesssim |H_{\alpha}(|z|u_1)|^p  
\lesssim (1+|u_1|)^{np(\ell-1)} e^{\alpha p2^{\ell}|u_1|^{\ell}}
\quad(|z|\le2,\,u_1\in\C).
\]
Thus~{\eqref{eqn:bergman:kernel:norm:estimate}} shows that
\[
\|K_{\alpha}(\cdot,z)\|^p_{F^{p}_{\alpha,\rho}}\simeq 1
\simeq (1+|z|)^{\rho+2n(\ell-1)/p'}e^{\frac{\alpha}2|z|^{2\ell}}
\quad(|z|\le2), 
\]
so we only have to prove the norm estimate for $|z|>2$.
In order to do that, we split the integral 
in~{\eqref{eqn:bergman:kernel:norm:estimate}} as the sum of the 
three integrals $\mathcal{I}_1(|z|)$, $\mathcal{I}_2(|z|)$ and 
$\mathcal{I}_3(|z|)$ on the sets
 $E_1=\{u_1\in\C:|u_1|>1, |\arg u_1|\le\pi/(N\ell)\}$, 
 $E_2=\{u_1\in\C:|u_1|>1, |\arg u_1|>\pi/(N\ell)\}$ and 
$E_3=\{u_1\in\C:|u_1|\le 1\}$,  respectively.
	
To estimate $\mathcal{I}_1(|z|)$ recall 
that~{\eqref{eqn:simeq:estimate:H}} gives 
\[
|H_{\alpha}(|z|u_1)|^p \simeq 
(|z||u_1|)^{np(\ell-1)} e^{\alpha p|z|^{\ell}\Re u_1^{\ell}}
\quad(u_1\in E_1,\,|z|>2),
\]
so 
\[
\mathcal{I}_1(|z|)\simeq 
|z|^{np(\ell-1)}e^{\frac{\alpha p}2|z|^{2\ell}}
\int_{E_1}|u_1|^{np(\ell-1)+\rho p-2(n-1)(\ell-1)} e^{-\frac{\alpha p}2|u_1^{\ell}-|z|^{\ell}|^2}dA(u_1).
\]
By making the change of variables $v=u_1^\ell$ we have that
\[
\mathcal{I}_1(|z|)
\simeq |z|^{np(\ell-1)}e^{\frac{\alpha p}2|z|^{2\ell}}
	   \int_{\{|v|\ge 1, |\arg v|\le \pi/N\}}
	   |v|^{\beta} e^{-\frac{\alpha p}2|v-|z|^{\ell}|^2} dA(v),
\]
where $\beta:=(n(\ell-1)(p-2)+\rho p)/\ell$.
Since for $|z|>2$ we have the inclusions
\begin{align*}
D(|z|^{\ell},\sin(\pi/N))
&\subset
\{v\in\C: |v|>1\}\cap D(|z|^{\ell},|z|^{\ell}\sin(\pi/N))\\
&\subset\{v\in\C: |v|>1, |\arg v|\le\pi/N\},
\end{align*}
the preceding integral $\mathcal{I}'_1(|z|)$ satisfies
\[
\mathcal{I}'_1(|z|)
\ge\int_{D(|z|^{\ell},\sin(\pi/N))}
	   |v|^{\beta} e^{-\frac{\alpha p}2|v-|z|^{\ell}|^2} dA(v)
\simeq|z|^{\beta\ell}.
\]
Moreover, Lemma~{\ref{lem:estuv}} shows that
$\mathcal{I}'_1(|z|)\lesssim I_{\alpha p/2,-\beta}(|z|^{\ell})
\simeq |z|^{\beta\ell}$. 
It follows that $\mathcal{I}'_1(|z|)\simeq |z|^{\beta\ell}
=|z|^{n(\ell-1)(p-2)+\rho p}$, and hence
\begin{equation}\label{eqn:estimate1}
\mathcal{I}_1(|z|)\simeq 
|z|^{np(\ell-1)}e^{\frac{\alpha p}2|z|^{2\ell}}\mathcal{I}'_1(|z|)
\simeq (1+|z|)^{2n(\ell-1)(p-1)+\rho p}\,
       e^{\frac{\alpha p}2|z|^{2\ell}}.
\end{equation}
Now we estimate $\mathcal{I}_2(|z|)$.
 By~{\eqref{eqn:lesssim:estimate:H}},
\[
|H_{\alpha}(|z|u_1)|^p \lesssim
(|z||u_1|)^{np(\ell-1)} e^{\alpha p\cos(\frac{\pi}N)\,|z|^{\ell} |u_1|^{\ell}}
\quad(u_1\in E_2,\,|z|>2),
\]
so $\mathcal{I}_2(|z|)\lesssim 
|z|^{np(\ell-1)}
e^{\frac{\alpha p}2\cos^2(\frac{\pi}N)\,|z|^{2\ell}}\mathcal{I}'_2(|z|)$, where
\begin{align*}
\mathcal{I}'_2(|z|)
:= & \int_{E_2}|u_1|^{np(\ell-1)+\rho p-2(n-1)(\ell-1)}     
    e^{-\frac{\alpha p}2
    \{|u_1|^{\ell}-|z|^{\ell}\cos(\frac{\pi}N)\}^2} dA(u_1)\\
\simeq & \int_1^{\infty} 
         r^{1+np(\ell-1)+\rho p-2(n-1)(\ell-1)}     
    e^{-\frac{\alpha p}2
    \{r^{\ell}-|z|^{\ell}\cos(\frac{\pi}N)\}^2} dr.
\end{align*}
Then we make the change of variables $t=r^{\ell}$ to get that
\[
\mathcal{I}'_2(|z|)\simeq 
\int_1^{\infty}t^{\beta+1}e^{-\frac{\alpha p}2
    \{t-|z|^{\ell}\cos(\frac{\pi}N)\}^2} dt,
\]
so Lemma~{\ref{lem:estuv}} shows that 
$\mathcal{I}'_2(|z|)\lesssim 
  J_{\frac{\alpha p}2,-\beta}(|z|^{\ell}\cos(\frac{\pi}N))
\simeq|z|^{\beta\ell+\ell}$.
 Hence
\begin{equation}\label{eqn:estimate2}
\mathcal{I}_2(|z|)\lesssim 
|z|^{np(\ell-1)+\beta\ell+\ell}
e^{\frac{\alpha p}2\cos^2(\frac{\pi}N)\,|z|^{2\ell}}\lesssim
(1+|z|)^{2n(\ell-1)(p-1)+\rho p}\,
       e^{\frac{\alpha p}2|z|^{2\ell}}.
\end{equation}
Finally, since by~{\eqref{eqn:rough:estimate:H}} we have that
\[
|H_{\alpha}(|z|u_1)|^p\lesssim (1+|z|)^{np(\ell-1)}e^{\alpha p|z|^{\ell}}\quad(u_1\in E_3, |z|>2),
\]
we obtain that
\begin{equation}\label{eqn:estimate3}
\mathcal{I}_3(|z|)
\lesssim (1+|z|)^{n(\ell-1)}e^{\alpha|z|^{\ell}}		
\lesssim (1+|z|)^{2n(\ell-1)(p-1)+\rho p}e^{\frac{\alpha p}2|z|^{2\ell}}.
\end{equation}
Taking into account~{\eqref{eqn:estimate1}}, 
\eqref{eqn:estimate2} and~{\eqref{eqn:estimate3}}, we conclude
that
\[
\|K_{\alpha}(\cdot,z)\|^p_{F^{p}_{\alpha,\rho}}
\simeq (1+|z|)^{2n(\ell-1)(p-1)+\rho p}
       e^{\frac{\alpha p}2|z|^{2\ell}}    \quad(|z|>2),
\] 
which ends the proof.	
\end{proof}

\begin{cor}\label{cor:qnormBergman}
	Let $1\le p\le\infty$. Then
	\[
	\|K_{\alpha}(\cdot,z)\|_{F^{p}_{\beta,\rho}}
	\simeq (1+|z|)^{\rho+2n(\ell-1)/p'}
	e^{\frac{\alpha^2}{2\beta}|z|^{2\ell}}\quad(z\in\C^n).
	\]
\end{cor}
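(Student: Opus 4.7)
The plan is to reduce this corollary to Proposition~\ref{prop:pnormBergman} via the dilation identity~\eqref{eqn:Kdelta}. The key observation is that rescaling the integration variable by a factor depending on $\alpha/\beta$ absorbs the mismatch between the kernel's parameter $\alpha$ and the weight's parameter $\beta$, turning the norm into one already covered by Proposition~\ref{prop:pnormBergman}.

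Concretely, set $\delta:=(\alpha/\beta)^{1/\ell}$ and make the change of variables $w=\delta u$. Since the Bergman kernel is Hermitian, $K_{\alpha}(w,z)=\overline{K_{\alpha}(z,w)}$, identity~\eqref{eqn:Kdelta} gives
\[
|K_{\alpha}(\delta u,z)|=\delta^{-n}|K_{\alpha\delta^{\ell}}(u,z)|
=\delta^{-n}|K_{\alpha^{2}/\beta}(u,z)|.
\]
At the same time the weight transforms as
\[
e^{-\frac{\beta}{2}|\delta u|^{2\ell}}
=e^{-\frac{\beta\delta^{2\ell}}{2}|u|^{2\ell}}
=e^{-\frac{\alpha^{2}}{2\beta}|u|^{2\ell}},
\]
precisely because $\beta\delta^{2\ell}=\alpha\delta^{\ell}=\alpha^{2}/\beta$ for our choice of $\delta$.

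Since $\delta$ is a fixed positive constant, $(1+\delta|u|)^{\rho}\simeq (1+|u|)^{\rho}$ with constants depending only on $\delta$ and $\rho$, and the Jacobian $\delta^{2n}$ of the substitution in $dV$ (or the constant $\delta^{-n}$ in the $p=\infty$ case) is absorbed into the~$\simeq$. Treating the cases $1\le p<\infty$ and $p=\infty$ in parallel, one obtains
\[
\|K_{\alpha}(\cdot,z)\|_{F^{p}_{\beta,\rho}}
\simeq
\|K_{\alpha^{2}/\beta}(\cdot,z)\|_{F^{p}_{\alpha^{2}/\beta,\rho}}.
\]
Applying Proposition~\ref{prop:pnormBergman} with parameter $\alpha^{2}/\beta$ in place of $\alpha$ then yields
\[
\|K_{\alpha}(\cdot,z)\|_{F^{p}_{\beta,\rho}}
\simeq (1+|z|)^{\rho+2n(\ell-1)/p'}e^{\frac{\alpha^{2}}{2\beta}|z|^{2\ell}},
\]
which is the claim. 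No real obstacle arises; the proof is essentially bookkeeping around the dilation~\eqref{eqn:Kdelta}, with the only point requiring care being the verification that the choice $\delta=(\alpha/\beta)^{1/\ell}$ simultaneously matches the kernel-weight and the measure-weight indices.
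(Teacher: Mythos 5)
Your proof is correct and follows essentially the same route as the paper: both reduce the corollary to Proposition~\ref{prop:pnormBergman} via the dilation identity~\eqref{eqn:Kdelta}. The only (cosmetic) difference is that you rescale the integration variable, landing on $\|K_{\alpha^2/\beta}(\cdot,z)\|_{F^p_{\alpha^2/\beta,\rho}}$, whereas the paper rescales the point, writing $K_\alpha(\cdot,z)=\delta^{-n}K_\beta(\cdot,z/\delta)$ with $\delta=(\beta/\alpha)^{1/\ell}$ and applying the proposition at $z/\delta$.
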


\begin{proof}
	Since 
	$K_{\alpha}(\delta z, w)=\delta^{-n}K_{\delta^\ell \alpha}(z,w)$,
	for $\delta=(\beta/\alpha)^{1/\ell}$, we have
	\begin{align*}
	\|K_{\alpha}(\cdot,z)\|_{F^{p}_{\beta,\rho}}
	&\simeq 	\|K_{\beta}(\cdot,z/\delta)\|_{F^{p}_{\beta,\rho}}\\
	&\simeq (1+|z|/\delta)^{\rho+2n(\ell-1)/p'}
	e^{\frac{\beta}2|z/\delta|^{2\ell}}\\
	&\simeq(1+|z|)^{\rho+2n(\ell-1)/p'}
	e^{\frac{\alpha^2}{2\beta}|z|^{2\ell}}.
	\end{align*}
	This ends the proof.
	\end{proof}

\subsection{The Bergman projection}\quad\par

The next lemma shows that the Bergman 
 projection $P_{\alpha}$ 
is pointwise well defined on $L^{p}_{\beta,\rho}$  
if and only if $\beta<2\alpha$.

\begin{lem}\label{lem:well-defined}
Let $\z\in \C^n$ and assume $1\le p\le \infty$.  

\begin{enumerate}
\item If for $\z\ne 0$ the form $U_\z:L^{2}_{\alpha}\to\C$, defined by
$U_\z(f)=P_\alpha(f)(\z)$, 	
is bounded on the normed space
 $(L^{2}_{\alpha}\cap L^{p}_{\beta,\rho},
\,\|\cdot\|_{L^{p}_{\beta,\rho}})$	
then  $\beta<2\alpha$.

\item Conversely, if $\beta<2\alpha$ then 
$U_\z:L^{p}_{\beta,\rho}\to\C$, defined by
\[
U_\z(f)=\int_{\C^n}f(w)K_{\alpha}(\z,w)e^{-\alpha |w|^{2\ell}}dV(w),
\]
 is bounded and 
	\[
	\|U_\z\|\lesssim (1+|\z|)^{-\rho+2n(\ell-1)/p}
	e^{\frac 12\frac{\alpha^2}{2\alpha-\beta}|\z|^{2\ell}}.
	\]
	\end{enumerate}
	\end{lem}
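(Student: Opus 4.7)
My plan is to realize $U_\z$ as a standard Hölder-duality pairing on $L^p(\C^n, dV)$ and then invoke the Bergman-kernel norm estimate of Corollary~\ref{cor:qnormBergman}. First I would set
$F(w):=f(w)(1+|w|)^{\rho}e^{-(\beta/2)|w|^{2\ell}}$
and
$G_\z(w):=K_\alpha(\z,w)(1+|w|)^{-\rho}e^{-(\alpha-\beta/2)|w|^{2\ell}}$,
so that $f\mapsto F$ is an isometric isomorphism from $L^p_{\beta,\rho}$ onto $L^p(\C^n, dV)$ and the integral defining $U_\z$ in (2) rewrites as $U_\z(f) = \int_{\C^n} F(w)\,G_\z(w)\,dV(w)$. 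The decisive observation is that, setting $\gamma:=2\alpha-\beta$, one has $\|G_\z\|_{L^{p'}(dV)} = \|K_\alpha(\cdot,\z)\|_{F^{p'}_{\gamma,-\rho}}$, which is the quantity controlled by Corollary~\ref{cor:qnormBergman} precisely when $\gamma>0$.

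For (2), I would assume $\beta<2\alpha$ and apply Hölder's inequality to obtain $|U_\z(f)|\le \|f\|_{L^p_{\beta,\rho}}\,\|G_\z\|_{L^{p'}}$. Corollary~\ref{cor:qnormBergman}, applied with exponent $p'$, weight parameter $\gamma=2\alpha-\beta$ and polynomial exponent $-\rho$, then yields, using $(p')'=p$,
$$\|G_\z\|_{L^{p'}}\simeq (1+|\z|)^{-\rho+2n(\ell-1)/p}\,e^{\frac{\alpha^2}{2(2\alpha-\beta)}|\z|^{2\ell}},$$
which is exactly the announced bound on $\|U_\z\|$.

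For (1), I would saturate the Hölder pairing with compactly supported test functions lying in $L^2_\alpha\cap L^p_{\beta,\rho}$. Let $\psi:=\overline{G_\z}/|G_\z|$ (defined arbitrarily, say $0$, on the zero set of $G_\z$) and, for $1<p<\infty$, define
$$f_R(w):=\psi(w)\,|G_\z(w)|^{p'/p}\,(1+|w|)^{-\rho}\,e^{(\beta/2)|w|^{2\ell}}\,\mathcal{X}_{B(0,R)}(w).$$
The compact support makes $f_R\in L^2_\alpha\cap L^p_{\beta,\rho}$, and a direct computation gives $U_\z(f_R)=\int_{B(0,R)}|G_\z|^{p'}\,dV$ and $\|f_R\|_{L^p_{\beta,\rho}}=(\int_{B(0,R)}|G_\z|^{p'}\,dV)^{1/p}$, so the quotient $|U_\z(f_R)|/\|f_R\|_{L^p_{\beta,\rho}}=(\int_{B(0,R)}|G_\z|^{p'}\,dV)^{1/p'}$ tends to $\|G_\z\|_{L^{p'}}$ as $R\to\infty$. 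If $\beta\ge 2\alpha$, I would then exhibit that $\|G_\z\|_{L^{p'}}=+\infty$: the weight $e^{-(\alpha-\beta/2)|w|^{2\ell}}$ in $G_\z$ no longer decays, while along the ray $w=t\z/|\z|$ (for which $\z\bar w=t|\z|\in S_N^\delta$) the lower bound in Proposition~\ref{prop:pointK} gives $|K_\alpha(\z,w)|\gtrsim e^{\alpha|\z|^\ell t^\ell}$, so the radial integrand $\gtrsim t^{(n(\ell-1)-\rho)p'+1}\,e^{\alpha p'|\z|^\ell t^\ell}$ forces divergence. This contradicts the hypothesized boundedness. The endpoints $p=1$ and $p=\infty$ fit the same scheme with small adjustments: for $p=\infty$ take $f_R=\psi\,(1+|w|)^{-\rho}e^{(\beta/2)|w|^{2\ell}}\mathcal{X}_{B(0,R)}$, and for $p=1$ localize the test function near a point where $|G_\z|$ is near-maximal.

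The only real subtlety I anticipate is guaranteeing that the saturating family lies in $L^2_\alpha$, not merely in $L^p_{\beta,\rho}$, which is precisely why the cutoff $\mathcal{X}_{B(0,R)}$ is forced into the construction. Once this is handled, the whole argument reduces to a clean Hölder duality together with the already-established norm estimate of Corollary~\ref{cor:qnormBergman} for the Bergman kernel.
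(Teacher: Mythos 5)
Your proof of part (2) is correct and is essentially the paper's argument: both factor the integrand so that H\"older's inequality reduces the bound on $\|U_\z\|$ to the norm $\|K_{\alpha}(\cdot,\z)\|_{L^{p'}_{2\alpha-\beta,-\rho}}$, which Corollary~\ref{cor:qnormBergman} controls. Part (1) is where you genuinely diverge from the paper. You saturate the H\"older pairing with truncated extremal functions $\psi|G_\z|^{p'/p}(1+|w|)^{-\rho}e^{(\beta/2)|w|^{2\ell}}\mathcal{X}_{B(0,R)}$, so that boundedness of $U_\z$ forces $\|G_\z\|_{L^{p'}}<\infty$, and you then rule out $\beta\ge2\alpha$ via the \emph{lower} pointwise bound on the kernel from Proposition~\ref{prop:pointK}. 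The paper instead takes the truncated monomial-type functions $f(z)=z^{\nu}(1+|z|)^{-|\nu|-\rho-2n-1}e^{\frac{\beta}{2}|z|^{2\ell}}$ and uses orthogonality of the monomials to compute $P_{\alpha}(f_R)(\z)$ exactly as a single term $c_{\nu}(R)\,\z^{\nu}/\|w^{\nu}\|^2_{F^2_{\alpha}}$; monotone convergence then forces a scalar radial integral to be finite, which for $|\nu|\ge1+\rho$ happens only when $\beta<2\alpha$. The paper's route is more elementary (it needs no Mittag-Leffler asymptotics for part (1), only orthogonality), while yours proves the stronger converse to (2), namely that boundedness of $U_\z$ already forces $\|K_{\alpha}(\cdot,\z)\|_{F^{p'}_{2\alpha-\beta,-\rho}}<\infty$; the paper's remark following the lemma records exactly your divergence observation, but only as an aside for $\nu=0$. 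Two small points to tighten: the divergence of $\|G_\z\|_{L^{p'}}$ must be obtained by integrating over a set of positive measure, e.g.\ the sector $\{w:|\arg(\z\overline{w})|\le\pi/(N\ell)\}$ on which the lower bound of Proposition~\ref{prop:pointK} holds and which has infinite volume, rather than literally ``along the ray'' (a null set); and the hypothesis $\z\ne0$ is exactly what guarantees $|\z\overline{w}|\to\infty$ on that sector, so it should be invoked explicitly.
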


\begin{proof}
	Assume that $U_\z$ is bounded on  $(L^{2}_{\alpha}\cap L^{p}_{\beta,\rho},\,\|\cdot\|_{L^{p}_{\beta,\rho}})$. Then, by Hahn-Banach theorem's, $U_\z$ extends to a bounded operator on $L^{p}_{\beta,\rho}$, which we also denote by $U_\z$.
	
	Let $\nu$ be a multi-index.
	It is clear that the function
	\begin{equation}\label{eqn:f1}
	f(z):=\frac{z^\nu}{(1+|z|)^{|\nu|+\rho+2n+1}}
	\, e^{\frac{\beta}2|z|^{2\ell}}
	\end{equation}
	belongs to $L^{p}_{\beta,\rho}$.
	Let $\mathcal{X}_R$ be the characteristic function of the open ball $B_R$ centered at $0$ with radius $R$.
	Then the function $f_R=\mathcal{X}_R\cdot f$ is in $ L^{2}_{\alpha}\cap L^{p}_{\beta,\rho}$ and 
	$\|f_R-f\|_{F^{p}_{\beta,\rho}}\to 0$ as $R\to\infty$.  
	Since 
	\[
	K_{\alpha,z}(w)
	=\sum_{\mu\in \N^n}\frac{w^\mu\overline{z}^\mu}{\|w^\mu\|^2_{F^{2}_{\alpha}}},
	\]
	where the series converges in $L^{2}_\alpha$, 
\[
P_\alpha(f_R)(z)=\langle f_R,K_{\alpha,z}\rangle_\alpha
=\sum_{\mu\in \N^n}\frac{z^\mu}{\|w^\mu\|^2_{F^{2}_{\alpha}}
}\,\langle f_R,w^\mu\rangle_\alpha.
\]
By integration in polar coordinates we have 
$\langle f_R,w^\mu\rangle_\alpha^\ell=\delta_{\mu,\nu} c_{\nu}(R)$, where 
\[
c_{\nu}(R):=
\int_{B_R} \frac{|w^\nu|^2}{(1+|w|)^{|\nu|+\rho+2n+1}}
\, e^{(\frac{\beta}2-\alpha)|w|^{2\ell}}\,dV(w).
	\]
	Thus $U_\z(f_R)=P_\alpha(f_R)(\z)	=c_{\nu}(R)\,
	\z^\nu/\|w^\nu\|^2_{F^{2}_{\alpha}}$.
	So, by the hypothesis and the monotone convergence theorem,
	\[
	U_\z(f)=\lim_{R\to\infty}U_\z(f_R)=	\frac{\z^\nu}{\|w^\nu\|^2_{F^{2}_{\alpha}}}
\int_{\C^n} \frac{|w^\nu|^2}{(1+|w|)^{|\nu|+\rho+2n+1}}
\, e^{(\frac{\beta}2-\alpha)|w|^{2\ell}}\,dV(w).
	\]
It follows that for any  $\nu$  such that 
	$\z^\nu\ne 0$  we have that the above integral is finite. Choosing $\nu$ such that 
	$
	|\nu|\ge 1+\rho
	$
	we obtain that $\beta<2\alpha$.
	
	Next assume $\beta<2\alpha$. 
	Let	$F_\z(w):=G(w)H_\z(w)$, where
	\begin{align*}
	G(w)
	&:=|f(w)|
	(1+|w|)^{\rho}e^{-\frac\beta 2|w|^{2\ell}}
	\quad\text{and}\quad\\
	H_\z(w)&:=
	|K_{\alpha}(\z,w)| (1+|w|)^{-\rho}
	e^{-(\alpha-\frac\beta 2)|w|^{2\ell}}.
	\end{align*}

 Since $\|G\|_{L^p}= \|f\|_{L^{p}_{\beta,\rho}}$, we obtain 
	\[
|U_\z(f)|\le\|F_\z\|_{L^1}\le\|K_\alpha(\cdot,\z)\|_{L^{p'}_{2\alpha-\beta,-\rho}}  \|f\|_{L^{p}_{\beta,\rho}}.
	\]
	Hence Corollary \ref{cor:qnormBergman} ends the proof.
	\end{proof}

\begin{rem}
	From the pointwise estimate of $|K_\alpha(z,w)|$ with  $z\overline w\in S^{\delta}_N$, given in Proposition \ref{prop:pointK},
	it is easy to check that if $\beta\ge 2\alpha$ and $f$ is the function defined  in \eqref{eqn:f1} with $\nu=0$, 
	then $F_\z\notin L^1$. So $U_\zeta(f)$ is not well defined.
	\end{rem}

\begin{cor}\label{cor:embedFinfty}
	Let $1\le p< \infty$. Then 
	\[
	F^{p}_{\alpha,\rho}\hookrightarrow F^{\infty}_{\alpha,\rho-2n(\ell-1)/p},
	\]
	that is,
	\[
	|f(z)|\lesssim
	\|f\|_{F^{p}_{\alpha,\rho}}(1+|z|)^{-\rho+(2n(\ell-1))/p}
	e^{\alpha |z|^{2\ell}/2}\quad(f\in F^{p}_{\alpha,\rho},\,z\in\C^n).
	\]
\end{cor}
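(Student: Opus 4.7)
The plan is to identify $f(z)$ with the value of the bounded functional $U_z$ from Lemma~\ref{lem:well-defined} and then apply the norm bound there. Since $\alpha<2\alpha$, Lemma~\ref{lem:well-defined}(ii) with $\beta=\alpha$ asserts that
\[
U_z(g)=\int_{\C^n}g(w)\,K_\alpha(z,w)\,e^{-\alpha|w|^{2\ell}}\,dV(w)
\]
defines a bounded linear functional on $L^p_{\alpha,\rho}$, with $\|U_z\|\lesssim(1+|z|)^{-\rho+2n(\ell-1)/p}\,e^{\alpha|z|^{2\ell}/2}$, using that $\alpha^2/(2\alpha-\alpha)=\alpha$. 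Once the reproducing identity $U_z(f)=f(z)$ is verified for every $f\in F^p_{\alpha,\rho}$, the desired pointwise estimate is immediate from $|f(z)|=|U_z(f)|\le\|U_z\|\,\|f\|_{L^p_{\alpha,\rho}}$.

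The core step is therefore the identity $U_z(f)=f(z)$ on $F^p_{\alpha,\rho}$. I would approximate $f$ by its truncations $f_R:=f\,\mathcal{X}_{B(0,R)}\in L^p_{\alpha,\rho}$. Dominated convergence applied to the convergent integral $\|f\|^p_{L^p_{\alpha,\rho}}$ shows that $f_R\to f$ in $L^p_{\alpha,\rho}$, so $U_z(f_R)\to U_z(f)$ by boundedness of $U_z$. To compute this limit directly, expand $f(w)=\sum_\nu a_\nu w^\nu$ (uniformly on $\overline{B(0,R)}$) and use the series $K_\alpha(z,w)=\sum_\nu z^\nu\overline{w}^\nu/\|w^\nu\|^2_{F^2_\alpha}$, which converges uniformly on compact subsets of $\C^n\times\C^n$. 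Swapping the integral with both series, and exploiting the orthogonality of the monomials $\{w^\nu\}_\nu$ via integration in polar coordinates over $\Sn$, yields
\[
U_z(f_R)=\sum_{\nu\in\N^n}\frac{a_\nu z^\nu\,c_\nu(R)}{\|w^\nu\|^2_{F^2_\alpha}},\qquad c_\nu(R):=\int_{B(0,R)}|w^\nu|^2\, e^{-\alpha|w|^{2\ell}}\,dV(w).
\]
Since $0\le c_\nu(R)\le\|w^\nu\|^2_{F^2_\alpha}$ and the series $\sum_\nu|a_\nu z^\nu|$ converges (as $f$ is entire), dominated convergence on the index set $\N^n$ gives $U_z(f_R)\to\sum_\nu a_\nu z^\nu=f(z)$. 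Comparing the two limits for $U_z(f_R)$ yields $U_z(f)=f(z)$.

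The main obstacle is precisely this reproducing identity: a generic $f\in F^p_{\alpha,\rho}$ need not lie in $F^2_\alpha$, so the Hilbert-space reproducing formula $P_\alpha f=f$ is not directly available. The truncation-plus-Taylor argument above circumvents this by working simultaneously with the power series of $f$ and the series expansion of the Bergman kernel, using orthogonality of the monomials on $\Sn$ to isolate the diagonal terms. Once this identity is in hand, combining it with the norm estimate of Lemma~\ref{lem:well-defined} closes the proof, and the embedding $F^p_{\alpha,\rho}\hookrightarrow F^\infty_{\alpha,\rho-2n(\ell-1)/p}$ follows.
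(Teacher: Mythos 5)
Your proposal is correct, and its main line---apply Lemma~\ref{lem:well-defined}(ii) with $\beta=\alpha$ (so that $\alpha^2/(2\alpha-\beta)=\alpha$) and combine the resulting bound on $\|U_z\|$ with the identity $U_z(f)=f(z)$---is exactly the argument the paper intends, since it states the corollary without proof immediately after that lemma. The one place where you genuinely diverge is the justification of the reproducing identity on $F^{p}_{\alpha,\rho}$: the paper defers this to Corollary~\ref{cor:representation}, proved there by density of $F^{2}_\alpha\cap F^{p}_{\beta,\rho}$ in $F^{p}_{\beta,\rho}$, whereas you verify it directly by truncating to $f_R=f\,\mathcal{X}_{B(0,R)}$, expanding both $f$ and the kernel in series, and using orthogonality of the monomials---essentially the same computation the paper carries out inside the proof of Lemma~\ref{lem:well-defined}(i). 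Your route has the advantage of being self-contained at this point in the exposition (the density argument behind Corollary~\ref{cor:representation} implicitly uses that norm convergence of holomorphic functions implies pointwise convergence, which is close to what the present corollary asserts), at the cost of some extra computation; all the interchanges of sums and integrals you invoke are justified by the absolute, locally uniform convergence of the Taylor series of $f$ and of the kernel series, so the argument goes through.
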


\begin{cor}\label{cor:representation}
	Let $1\le p\le\infty$ and let $\beta<2\alpha$. If $f\in F^{p}_{\beta,\rho}$ then $f=P_{\alpha}f$.
\end{cor}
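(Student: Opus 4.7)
The plan is to reduce to square-integrable approximations by choosing an intermediate weight. Under the hypothesis $\beta<2\alpha$, the interval $(\beta,2\alpha)$ is nonempty, and for any $\gamma$ inside it, $f$ will belong to $F^{2}_{\gamma}$ while $P_{\alpha}$ will be continuous at each point $\zeta$ with respect to $\|\cdot\|_{L^{2}_{\gamma}}$.

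First I would record the pointwise bound $|f(w)|\lesssim (1+|w|)^{A}e^{\beta|w|^{2\ell}/2}$, obtained from Corollary~\ref{cor:embedFinfty} when $p<\infty$ and directly from the definition of $F^{\infty}_{\beta,\rho}$ when $p=\infty$. Fixing $\gamma\in(\beta,2\alpha)$, this bound makes $|f|^{2}e^{-\gamma|\cdot|^{2\ell}}$ integrable, so $f\in F^{2}_{\gamma}$. Since the normalized monomials form an orthonormal basis of $F^{2}_{\gamma}$ and the Fourier coefficients of an entire function relative to this basis agree with its Taylor coefficients (by polar-coordinate integration and uniqueness of holomorphic expansions), the partial Taylor sums $S_{N}(w):=\sum_{|\nu|\le N}a_{\nu}w^{\nu}$ converge to $f$ in $L^{2}_{\gamma}$, while $S_{N}(\zeta)\to f(\zeta)$ for every $\zeta\in\C^{n}$ because Taylor series of entire functions converge pointwise everywhere.

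Each polynomial $S_{N}$ lies in $F^{2}_{\alpha}$, so $P_{\alpha}S_{N}=S_{N}$ by the reproducing property. To transfer this to $f$, I would split the weight as $e^{-\alpha|w|^{2\ell}}=e^{-\gamma|w|^{2\ell}/2}\,e^{-(2\alpha-\gamma)|w|^{2\ell}/2}$ and apply Cauchy--Schwarz to the integral defining $P_{\alpha}g(\zeta)$, obtaining
\[
|P_{\alpha}g(\zeta)|\le\|g\|_{L^{2}_{\gamma}}\,\|K_{\alpha}(\zeta,\cdot)\|_{L^{2}_{2\alpha-\gamma}}.
\]
Corollary~\ref{cor:qnormBergman} ensures the right-hand kernel norm is finite precisely because $2\alpha-\gamma>0$. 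Applying the inequality to $g=f-S_{N}$ yields $P_{\alpha}S_{N}(\zeta)\to P_{\alpha}f(\zeta)$; combining this with $P_{\alpha}S_{N}(\zeta)=S_{N}(\zeta)\to f(\zeta)$ gives the identity $P_{\alpha}f(\zeta)=f(\zeta)$.

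The delicate point is the double role of $\gamma$: it has to be large enough ($\gamma>\beta$) to place $f$ and its Taylor partial sums in a common $L^{2}$ space, yet small enough ($\gamma<2\alpha$) so that $K_{\alpha}(\zeta,\cdot)$ itself belongs to $L^{2}_{2\alpha-\gamma}$ for each~$\zeta$. The hypothesis $\beta<2\alpha$ is exactly what makes these two requirements simultaneously satisfiable.
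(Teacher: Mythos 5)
Your argument is correct. It takes a mildly but genuinely different route from the paper's. The paper argues, for $p<\infty$, that $F^{2}_{\alpha}\cap F^{p}_{\beta,\rho}$ is dense in $F^{p}_{\beta,\rho}$ and that $P_{\alpha}$ is the identity on $F^{2}_{\alpha}$, then disposes of $p=\infty$ separately via the inclusion $F^{\infty}_{\beta,\rho}\subset F^{p}_{\beta',\rho}$ for $\beta'\in(\beta,2\alpha)$; the needed continuity (of $f\mapsto P_{\alpha}f(\zeta)$ and of point evaluation on $F^{p}_{\beta,\rho}$) is supplied by Lemma~\ref{lem:well-defined} and Corollary~\ref{cor:embedFinfty}. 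You instead route every $f$ through a single Hilbert space $F^{2}_{\gamma}$ with $\beta<\gamma<2\alpha$ and approximate by the partial sums of the homogeneous expansion, using the orthonormal basis of monomials; your Cauchy--Schwarz bound $|P_{\alpha}g(\zeta)|\le\|g\|_{L^{2}_{\gamma}}\|K_{\alpha}(\zeta,\cdot)\|_{L^{2}_{2\alpha-\gamma}}$ is exactly the $p=2$ case of Lemma~\ref{lem:well-defined}(ii). What your version buys is uniformity in $p$ (no separate case $p=\infty$) and the replacement of the paper's unproved density assertion for $F^{2}_{\alpha}\cap F^{p}_{\beta,\rho}$ in $F^{p}_{\beta,\rho}$ by the completeness of the monomials in $F^{2}_{\gamma}$, which the paper does establish; what it costs is the small extra step of checking that the orthonormal-expansion coefficients coincide with the Taylor coefficients, which you correctly justify by orthogonality of monomials in polar coordinates and uniqueness of the homogeneous expansion. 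Both proofs are, at bottom, the same scheme: approximate $f$ by functions on which $P_{\alpha}$ is known to reproduce, then pass to the limit using pointwise continuity of $P_{\alpha}$ and of evaluation.
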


\begin{proof}
	If $p<\infty$, 
	the space $F^{2}_\alpha\cap F^{p}_{\beta,\rho}$ is dense in $F^{p}_{\beta,\rho}$ and $P_\alpha$ is the identity on $F^{2}_\alpha$, so $P_\alpha$ is the identity on $F^{p}_{\beta,\rho}$.
	
	The case $p=\infty$ follows from the previous one
	 by noting that $F^{\infty}_{\beta,\rho} \subset
	  F^{p}_{\beta',\rho}$,
	for any $\beta'\in (\beta,2\alpha)$. 
\end{proof}

\begin{prop}\label{prop:PLpontoFp}
For $1\le p\le\infty$ the Bergman operator 
	 $P_{\alpha}$ is a bounded projection from $L^{p}_{\alpha,\rho}$ onto $F^{p}_{\alpha,\rho}$.
	\end{prop}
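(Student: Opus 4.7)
The plan is to reduce the entire proposition to a single Schur-test computation, using the sharp norm estimates of the Bergman kernel already available. By Lemma~\ref{lem:well-defined}(ii) applied with $\beta=\alpha<2\alpha$, the integral defining $P_\alpha f(z)$ converges absolutely for every $f\in L^p_{\alpha,\rho}$ and every $z\in\C^n$, so $P_\alpha f$ is pointwise well defined.

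The main step is a Schur test. Setting $F(w):=f(w)(1+|w|)^\rho e^{-\alpha|w|^{2\ell}/2}$ one has $\|F\|_{L^p(dV)}=\|f\|_{L^p_{\alpha,\rho}}$, and a direct computation identifies the inequality $\|P_\alpha f\|_{L^p_{\alpha,\rho}}\lesssim\|f\|_{L^p_{\alpha,\rho}}$ with the $L^p(dV)\to L^p(dV)$ boundedness of the integral operator with kernel
\[
M(z,w):=K_\alpha(z,w)(1+|z|)^\rho(1+|w|)^{-\rho}e^{-\alpha(|z|^{2\ell}+|w|^{2\ell})/2}.
\]
Schur's lemma with the trivial weight $h\equiv 1$ reduces the problem to showing that both $\sup_z\int|M(z,w)|\,dV(w)$ and $\sup_w\int|M(z,w)|\,dV(z)$ are finite, which then yields the bound simultaneously for every $1\le p\le\infty$. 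Both estimates are immediate consequences of Proposition~\ref{prop:pnormBergman} applied with $p=1$, which states $\|K_\alpha(\cdot,z)\|_{F^1_{\alpha,\tau}}\simeq(1+|z|)^\tau e^{\alpha|z|^{2\ell}/2}$, \emph{without} the extra polynomial factor $(1+|z|)^{2n(\ell-1)}$ that appears at $p=\infty$. Taking $\tau=-\rho$ in the first integral and $\tau=\rho$ in the second makes all $z$- or $w$-dependent factors cancel, yielding $\int|M(z,w)|\,dV(w)\simeq 1\simeq\int|M(z,w)|\,dV(z)$.

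Once boundedness is in hand the rest is formal. Holomorphicity of $P_\alpha f$ follows by differentiating under the integral: $K_\alpha(z,w)=H_\alpha(z\overline w)$ is entire in $z$, and the locally uniform absolute convergence from Lemma~\ref{lem:well-defined} justifies the differentiation. Hence $P_\alpha f\in H\cap L^p_{\alpha,\rho}=F^p_{\alpha,\rho}$, so $P_\alpha$ maps $L^p_{\alpha,\rho}$ boundedly into $F^p_{\alpha,\rho}$. Surjectivity and the identity $P_\alpha^2=P_\alpha$ follow directly from Corollary~\ref{cor:representation} applied with $\beta=\alpha$: every $g\in F^p_{\alpha,\rho}$ satisfies $P_\alpha g=g$, which gives $F^p_{\alpha,\rho}\subseteq P_\alpha(L^p_{\alpha,\rho})$ and hence equality, and $P_\alpha(P_\alpha f)=P_\alpha f$ since $P_\alpha f\in F^p_{\alpha,\rho}$.

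The hard part is therefore concentrated in the Schur estimate, and the crucial point is that Proposition~\ref{prop:pnormBergman} provides the \emph{sharp} $L^1$-norm of the Bergman kernel: the exponent $2n(\ell-1)/p'$ vanishes precisely at $p=1$. A naive pointwise majorization of the form $|K_\alpha(z,w)|\lesssim A(z)B(w)$ would necessarily reintroduce the $(1+|z|)^{2n(\ell-1)}$ growth that is visible in the $L^\infty$-norm of the kernel and would destroy the Schur bound when $\ell>1$; the off-diagonal cancellation captured by Proposition~\ref{prop:pnormBergman} (which in turn rests on the Mittag-Leffler asymptotics of Corollary~\ref{cor:diffE}) is exactly what makes the argument go through uniformly in $\ell\ge1$.
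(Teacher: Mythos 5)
Your proof is correct and follows essentially the same route as the paper: the paper's argument is precisely a Schur test with trivial weight, carried out via H\"older and Fubini using the $c$-weighted $L^1$-estimate $\int \Omega_\alpha(z,w)(1+|w|)^c\,dV(w)\simeq(1+|z|)^c$, which is Proposition~\ref{prop:pnormBergman} at $p=1$ — exactly the ingredient you isolate. Your explicit handling of surjectivity and idempotency via Corollary~\ref{cor:representation} with $\beta=\alpha$ is the intended (if unstated) completion of the paper's proof.
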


\begin{proof}
First we consider the case $1< p<\infty$.
	By Proposition \ref{prop:pnormBergman}, the function 
			\[
		\Omega_\alpha(z,w):=
		e^{-\frac{\alpha}2 |z|^{2\ell}}
		|K_\alpha(z,w)|e^{-\frac{\alpha}2 |w|^{2\ell}}
		\]
		satisfies
	\begin{equation}\label{eqn:omega}
		\int_{\C^n}\Omega_\alpha(z,w) (1+|w|)^c dV(w)
		\simeq \|K_\alpha(\cdot,z)\|_{L^{1}_{\alpha,c}}\simeq (1+|z|)^c.
	\end{equation}
	If $\varphi\in L\pla$, then H\"older's inequality and \eqref{eqn:omega} with $c=0$ give 
	\begin{equation}\label{eqn:PLpontoFp}
e^{-\frac{p\alpha}2|z|^{2\ell}} |P_{\alpha}(\varphi)(z)|^p
\lesssim \int_{\C^n} |\varphi(w)|^p \Omega_\alpha(z,w) e^{-\frac{p\alpha}2|w|^{2\ell}} dV(w).
\end{equation}
So Fubini's theorem and \eqref{eqn:omega} with $c=\rho\, p$ imply
$\|P_{\alpha}(\varphi)\|_{L^{p}_{\alpha,\rho}}\lesssim  \|f\|_{L^{p}_{\alpha,\rho}}$.

If $p=1$ then \eqref{eqn:PLpontoFp} is obvious and, as in the above case, we obtain the result.

If $p=\infty$ then
	\begin{equation*}
(1+|z|)^\rho e^{-\frac{\alpha}2|z|^{2\ell}} |P_{\alpha}(\varphi)(z)|
\lesssim \|f\|_{L^{\infty}_{\alpha,\rho}}
(1+|z|)^\rho
\int_{\C^n} \frac{\Omega_\alpha(z,w) }{(1+|w|)^{\rho}} dV(w).
\end{equation*}
So \eqref{eqn:omega} shows that 
$\|P_{\alpha}(\varphi)\|_{L^{\infty}_{\alpha,\rho}}\lesssim  \|f\|_{L^{\infty}_{\alpha,\rho}}$.
\end{proof}

\begin{cor}\label{cor:dualF}
Let $1\le p<\infty$.
	Then the dual of $F^{p}_{\alpha,\rho}$, 	
	with respect to the pairing
	 $\langle\cdot,\cdot\rangle_\alpha$, is
	  $F^{p'}_{\alpha,-\rho}$.
	\end{cor}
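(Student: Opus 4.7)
I will establish the isomorphism $(F^p_{\alpha,\rho})^*\cong F^{p'}_{\alpha,-\rho}$ under $\langle\cdot,\cdot\rangle_\alpha$ via a Bergman-projection duality argument. For the easy direction, decomposing the weight symmetrically as $e^{-\alpha|z|^{2\ell}}=e^{-\alpha|z|^{2\ell}/2}\cdot e^{-\alpha|z|^{2\ell}/2}$ and pairing the $(1+|z|)^{\pm\rho}$ factors, Hölder's inequality immediately gives $|\langle f,g\rangle_\alpha|\le\|f\|_{F^p_{\alpha,\rho}}\|g\|_{F^{p'}_{\alpha,-\rho}}$, so every $g\in F^{p'}_{\alpha,-\rho}$ defines a bounded functional on $F^p_{\alpha,\rho}$ via the pairing.

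For the converse, given $\Lambda\in(F^p_{\alpha,\rho})^*$, Proposition~\ref{prop:PLpontoFp} makes $\widetilde\Lambda:=\Lambda\circ P_\alpha$ a bounded linear functional on $L^p_{\alpha,\rho}$. The standard weighted $L^p$ duality (which reduces to the unweighted case through the isometry $f\mapsto f(1+|z|)^\rho e^{-\alpha|z|^{2\ell}/2}$ from $L^p_{\alpha,\rho}$ onto $L^p(\C^n,dV)$) produces $h\in L^{p'}_{\alpha,-\rho}$ with $\widetilde\Lambda(\varphi)=\langle\varphi,h\rangle_\alpha$ for all $\varphi\in L^p_{\alpha,\rho}$. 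Corollary~\ref{cor:representation} gives $P_\alpha f=f$ for $f\in F^p_{\alpha,\rho}$, hence $\Lambda(f)=\langle f,h\rangle_\alpha$. The natural candidate representing function is then $g:=P_\alpha h$, which belongs to $F^{p'}_{\alpha,-\rho}$ by Proposition~\ref{prop:PLpontoFp} applied with exponent $p'$ and weight $-\rho$.

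The key remaining point---and the main obstacle---is to verify $\langle f,h\rangle_\alpha=\langle f,g\rangle_\alpha$ for every $f\in F^p_{\alpha,\rho}$, which amounts to the self-adjointness identity $\langle P_\alpha f,h\rangle_\alpha=\langle f,P_\alpha h\rangle_\alpha$. Writing $f=P_\alpha f$ and exploiting $K_\alpha(z,w)=\overline{K_\alpha(w,z)}$ reduces this to a Fubini interchange. To justify it, I observe that the proof of Proposition~\ref{prop:PLpontoFp} actually controls the positive operator
\[
P_\alpha^+(\psi)(\zeta):=\int_{\C^n}|K_\alpha(w,\zeta)|\psi(w)e^{-\alpha|w|^{2\ell}}dV(w),
\]
giving $\|P_\alpha^+(|h|)\|_{L^{p'}_{\alpha,-\rho}}\lesssim\|h\|_{L^{p'}_{\alpha,-\rho}}$. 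Integrating first in $w$ produces $P_\alpha^+(|h|)(\zeta)$, and a subsequent Hölder estimate against $f\in L^p_{\alpha,\rho}$ yields
\[
\int_{\C^n}\!\int_{\C^n}|K_\alpha(w,\zeta)||f(\zeta)||h(w)|e^{-\alpha(|\zeta|^{2\ell}+|w|^{2\ell})}dV(w)\,dV(\zeta)
\lesssim\|f\|_{L^p_{\alpha,\rho}}\|h\|_{L^{p'}_{\alpha,-\rho}}<\infty,
\]
validating the swap. Finally, uniqueness of $g$ follows by evaluating at the reproducing kernels $K_\alpha(\cdot,z)\in F^p_{\alpha,\rho}$ (inclusion via Proposition~\ref{prop:pnormBergman}): the reproducing property gives $\langle K_\alpha(\cdot,z),g\rangle_\alpha=\overline{g(z)}$, so the map $g\mapsto\langle\cdot,g\rangle_\alpha$ is injective on $F^{p'}_{\alpha,-\rho}$.
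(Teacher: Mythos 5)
Your proof is correct and follows essentially the same route as the paper's: weighted $L^p$ duality for $L^{p}_{\alpha,\rho}$ combined with the bounded projection $P_\alpha$ from Proposition \ref{prop:PLpontoFp}, using Corollary \ref{cor:representation} to identify $\Lambda$ with pairing against $P_\alpha h\in F^{p'}_{\alpha,-\rho}$. The paper compresses this into two lines, so your write-up is just a fully detailed version of the same argument, with the self-adjointness/Fubini step and the injectivity via reproducing kernels made explicit.
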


\begin{proof}
	From the classical $L^p$-duality it is easy to check that  the dual of $L^{p}_{\alpha,\rho}$, with respect to the pairing $\langle\cdot,\cdot\rangle_\alpha^\ell$, is $L^{p'}_{\alpha,-\rho}$.
		This result together with  Proposition \ref{prop:PLpontoFp} prove the corollary.
	\end{proof}

\section{Proof of Theorem \ref{thm:embeddings}}
\label{sec:embeddings}

The case $\ell=1$ and $\rho=\eta=0$ is well
 known (see \cite{janson-peetre-rochberg}). 
 For $n=1$, the theorem can be deduced from the
  characterization of Carleson measures obtained
   in  \cite[Theorem 1]{constantin-pelaez}.

\subsection{Necessary conditions for all $p$ and $q$} 

\begin{lem}\label{lem:necessarypq}
If 
$F^{p}_{\beta,\rho}\hookrightarrow F^{q}_{\gamma,\eta}$,
 then either $\beta<\gamma$ or $\beta=\gamma$ and 
\[
2n(\ell-1)\bigl(\tfrac 1p-\tfrac 1q\bigr)\le\rho -\eta.
\]
\end{lem}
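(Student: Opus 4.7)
The plan is to test the embedding against the family of reproducing kernels $K_\beta(\cdot,z)$, $z\in\C^n$, whose norms in the relevant Fock--Sobolev spaces have been computed exactly in the preceding section.

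First, I would fix $z\in\C^n$ and take $f_z=K_\beta(\cdot,z)$. By Proposition~\ref{prop:pnormBergman} applied with $\alpha=\beta$, the function $f_z$ belongs to $F^{p}_{\beta,\rho}$ and satisfies
\[
\|f_z\|_{F^{p}_{\beta,\rho}}
\simeq (1+|z|)^{\rho+2n(\ell-1)/p'}\,
e^{\frac{\beta}{2}|z|^{2\ell}}.
\]
On the other hand, Corollary~\ref{cor:qnormBergman} (with the kernel parameter $\alpha=\beta$, the weight parameter $\gamma$, exponent $q$ and shift $\eta$) yields
\[
\|f_z\|_{F^{q}_{\gamma,\eta}}
\simeq (1+|z|)^{\eta+2n(\ell-1)/q'}\,
e^{\frac{\beta^{2}}{2\gamma}|z|^{2\ell}}.
\]

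Next, from the hypothesis $F^{p}_{\beta,\rho}\hookrightarrow F^{q}_{\gamma,\eta}$ there is a constant $C>0$, independent of $z$, such that $\|f_z\|_{F^{q}_{\gamma,\eta}}\le C\,\|f_z\|_{F^{p}_{\beta,\rho}}$. Combining the two asymptotic estimates above, we obtain
\[
(1+|z|)^{\eta+2n(\ell-1)/q'}\,
e^{\frac{\beta^{2}}{2\gamma}|z|^{2\ell}}
\lesssim
(1+|z|)^{\rho+2n(\ell-1)/p'}\,
e^{\frac{\beta}{2}|z|^{2\ell}}\qquad(z\in\C^n).
\]
Letting $|z|\to\infty$ and comparing the exponential rates of growth forces $\beta^{2}/(2\gamma)\le\beta/2$, i.e.\ $\beta\le\gamma$. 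If $\beta<\gamma$ we are in the first alternative. If $\beta=\gamma$ the exponentials cancel and the same limit applied to the polynomial factors yields
\[
\eta+\tfrac{2n(\ell-1)}{q'}\le\rho+\tfrac{2n(\ell-1)}{p'},
\]
which, using $\tfrac{1}{q'}-\tfrac{1}{p'}=\tfrac{1}{p}-\tfrac{1}{q}$, is exactly $2n(\ell-1)\bigl(\tfrac{1}{p}-\tfrac{1}{q}\bigr)\le\rho-\eta$.

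There is essentially no obstacle here beyond having the sharp two-sided norm estimates for $K_\beta(\cdot,z)$ in both source and target spaces; once those are in hand, the argument is a direct asymptotic comparison. The real content (and where the work lies) is in Proposition~\ref{prop:pnormBergman} and Corollary~\ref{cor:qnormBergman}, which have already been established.
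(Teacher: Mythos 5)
Your proposal is correct and follows essentially the same route as the paper: the paper tests the embedding on the kernels $K_{\alpha}(\cdot,z)$ and compares the two norms via Corollary~\ref{cor:qnormBergman}, which for your choice $\alpha=\beta$ gives exactly the asymptotic comparison you write down. The only cosmetic difference is that the paper keeps the kernel parameter $\alpha$ general, whereas you specialize to $\alpha=\beta$; both yield $\beta\le\gamma$ from the exponential rates and the stated inequality on $\rho-\eta$ from the polynomial factors when $\beta=\gamma$.
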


\begin{proof}
By Corollary \ref{cor:qnormBergman} the ratio
\[
\frac{\|K_{\alpha}(\cdot,z)\|_{F^{q}_{\gamma,\eta}}}{\|K_{\alpha}(\cdot,z)\|_{F^{p}_{\beta,\rho}}}
\simeq\frac{ (1+|z|)^{\eta+2n(\ell-1)/q'}
	e^{\frac{\alpha^2}{2\gamma}|z|^{2\ell}}}
{ (1+|z|)^{\rho+2n(\ell-1)/p'}e^{\frac{\alpha^2}{2\beta}|z|^{2\ell}}}
\]
is bounded  if and only if $\beta$, $\gamma$, $\rho$ and $\eta$ satisfy the above conditions.
\end{proof}

\subsection{Proof of Theorem \ref{thm:embeddings} for $1\le p\le q\le\infty$}\quad\par

The  next lemma shows that the necessary conditions obtained in the above section are also sufficient, which proves  Theorem \ref{thm:embeddings} for $1\le p\le q\le\infty$.
 
\begin{lem}\label{lem:condsuffpleq}
If  either $\beta<\gamma$ or $\beta=\gamma$ and 
\[
2n(\ell-1)\bigl(\tfrac 1p-\tfrac 1q\bigr)\le\rho -\eta,
\]
then
$F^{p}_{\beta,\rho}\hookrightarrow F^{q}_{\gamma,\eta}$, 
provided that $1\le p\le q\le\infty$.
\end{lem}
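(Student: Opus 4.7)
My plan to prove Lemma~\ref{lem:condsuffpleq} is to combine the pointwise growth estimate from Corollary~\ref{cor:embedFinfty} with a factorization of $|f|^q$ that reduces the $L^q_{\gamma,\eta}$-norm to the $L^p_{\beta,\rho}$-norm.

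First I would dispose of the boundary cases. When $p=q$, the hypothesis forces $\beta<\gamma$, or else $\beta=\gamma$ with $\rho\ge\eta$, and the embedding is immediate from the pointwise weight comparison $(1+|z|)^{\eta}e^{-\frac\gamma 2|z|^{2\ell}}\lesssim(1+|z|)^{\rho}e^{-\frac\beta 2|z|^{2\ell}}$. When $p<q=\infty$ (so $1/q=0$), the hypothesis becomes $\beta<\gamma$ or $\beta=\gamma$ with $2n(\ell-1)/p\le\rho-\eta$, and Corollary~\ref{cor:embedFinfty} directly produces the required bound on $\|f\|_{F^{\infty}_{\gamma,\eta}}$.

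The substantial case is $1\le p<q<\infty$. Here I would write
\[
|f(z)|^q(1+|z|)^{\eta q}e^{-\frac{\gamma q}{2}|z|^{2\ell}}
=\bigl(|f(z)|(1+|z|)^{\rho}e^{-\frac{\beta}{2}|z|^{2\ell}}\bigr)^{\!p}\cdot W(z),
\]
where
\[
W(z):=|f(z)|^{q-p}(1+|z|)^{\eta q-\rho p}\,e^{-\frac{\gamma q-\beta p}{2}|z|^{2\ell}},
\]
and then bound $|f(z)|^{q-p}$ pointwise via Corollary~\ref{cor:embedFinfty}. A direct exponent calculation shows that
\[
W(z)\lesssim \|f\|_{F^p_{\beta,\rho}}^{q-p}\,(1+|z|)^{q\left[(\eta-\rho)+2n(\ell-1)\left(\frac 1p-\frac 1q\right)\right]}\,e^{-\frac{(\gamma-\beta)q}{2}|z|^{2\ell}}.
\]
Under the hypothesis this last expression is uniformly bounded in $z$: if $\beta<\gamma$ the exponential decay dominates, and if $\beta=\gamma$ the polynomial exponent is $\le0$ by assumption. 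Integrating in $z$ then yields
\[
\|f\|^q_{F^q_{\gamma,\eta}}\lesssim \|f\|_{F^p_{\beta,\rho}}^{q-p}\int_{\C^n}|f(z)|^p(1+|z|)^{\rho p}e^{-\frac{\beta p}{2}|z|^{2\ell}}dV(z)=\|f\|^q_{F^p_{\beta,\rho}},
\]
which is exactly the desired embedding.

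The only real obstacle is the bookkeeping of exponents, which is engineered so that the polynomial exponent appearing in the bound for $W$ matches the borderline condition $2n(\ell-1)(1/p-1/q)\le\rho-\eta$ exactly at $\beta=\gamma$. Once the factorization is written down, the rest of the argument is automatic from Corollary~\ref{cor:embedFinfty} and the definition of the $F^p_{\beta,\rho}$-norm.
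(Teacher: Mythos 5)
Your proposal is correct and follows essentially the same route as the paper: the same treatment of $p=q$ via pointwise weight comparison, the same chaining through Corollary~\ref{cor:embedFinfty} for $p<q=\infty$, and for $p<q<\infty$ your factor $W$ is exactly the paper's $H^{q-p}$, bounded uniformly by the same application of Corollary~\ref{cor:embedFinfty}. No gaps.
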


\begin{proof}	
If $p=q$ then $\eta\le\rho$. Hence
$(1+|z|)^\eta e^{-\frac{\gamma}2|z|^2{2\ell}}\lesssim 
(1+|z|)^\rho e^{-\frac{\beta}2|z|^2{2\ell}}$
which proves the embedding $F^{p}_{\beta,\rho}\hookrightarrow F^{p}_{\gamma,\eta}$.

The case  $p<q=\infty$ is a consequence of 
Corollary 	\ref{cor:embedFinfty} and the case $p=q$. Indeed, $F^{p}_{\beta,\rho}\hookrightarrow F^{\infty}_{\beta,\rho-2n(\ell-1)/p}\hookrightarrow
F^{\infty}_{\gamma,\eta}.
$

Assume $1\le p<q<\infty$ and let  $f\in F^{p}_{\beta,\rho}$. Consider $F$ the function defined by
\[
F(z):=
|f(z)|(1+|z|)^{\eta }e^{-\frac{\gamma }2|z|^{2\ell}}
=G(z)^{p/q}H(z)^{(q-p)/q},
\]
where
\[
G(z):=|f(z)|(1+|z|)^{\rho }e^{-\frac{\beta }2|z|^{2\ell}}
\]
and
\[
H(z):=
|f(z)|(1+|z|)^{\frac{\eta q -\rho p}{q-p}}
e^{-\frac{\gamma q-\beta p}{2(q-p)}|z|^{2\ell}}.
\]
By Corollary 	\ref{cor:embedFinfty} and the hypotheses on  $\rho$ and $\eta$,  we have 
\begin{align*}
|H(z)|&\lesssim \|f\|_{F^{p}_{\beta,\rho}}
(1+|z|)^{\frac{\eta q -\rho p}{q-p}-\rho+\frac{2n(\ell-1)}{p}}
e^{\bigl(-\frac{\gamma q-\beta p}{2(q-p)}+\frac{\beta}2\bigr)|z|^{2\ell}}\\
&= \|f\|_{F^{p}_{\beta,\rho}}
(1+|z|)^{(\eta -\rho )\frac{q}{q-p}+\frac{2n(\ell-1)}{p}}
e^{-\frac{(\gamma -\beta)q }{2(q-p)}|z|^{2\ell}}\\
&\lesssim  \|f\|_{F^{p}_{\beta,\rho}}.
\end{align*}
Hence
\[
\|f\|_{F^{q}_{\gamma,\eta}}^q=\|F\|_{L^q}^q\lesssim \|f\|_{F^{p}_{\beta,\rho}}^{q-p}\,\|G\|_{L^p}^p=\|f\|_{F^{p}_{\beta,\rho}}^{q}.\qedhere
\]
	\end{proof}
	
Observe that, for $1\le p\le q\le \infty$, by Lemmas \ref{lem:necessarypq} and \ref{lem:condsuffpleq}, the fact that the embedding 
$F^{p}_{\beta,\rho}\hookrightarrow F^{q}_{\gamma,\eta}$ holds is only a question 
 of  growth, that is, $F^{p}_{\beta,\rho}\hookrightarrow F^{q}_{\gamma,\eta}$ if and only if 
$F^{\infty}_{\beta,\rho-2n(\ell-1)/p}\hookrightarrow 
	F^{\infty}_{\gamma,\eta-2n(\ell-1)/q}$.

\subsection{Sufficient conditions for $1\le q<p\le\infty$}

\begin{lem}\label{lem:condsuffip>q}
	If  either $\beta<\gamma$ or $\beta=\gamma$ and 
	$
2n\left(\frac 1q-\frac 1p\right)< \rho-\eta
$,
then we have
$F^{p}_{\beta,\rho}\hookrightarrow 
 F^{q}_{\gamma,\eta}$, provided that  $1\le q< p\le\infty$. 
\end{lem}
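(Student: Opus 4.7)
The plan is to prove the embedding by a direct computation: Hölder's inequality handles the case $1\le q<p<\infty$, and the defining pointwise estimate of the $F^{\infty}$-norm handles the case $p=\infty$. In both cases one integrates a product where one factor reproduces the $F^{p}_{\beta,\rho}$-norm of $f$ and the other is a purely weight-theoretic tail whose finiteness is exactly the quantitative condition of the statement.

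First I would treat $1\le q<p<\infty$. For $f\in F^{p}_{\beta,\rho}$, write the integrand as
\[
|f(z)|^q(1+|z|)^{\eta q}e^{-\frac{\gamma q}{2}|z|^{2\ell}}
=\bigl(|f(z)|(1+|z|)^{\rho}e^{-\frac{\beta}{2}|z|^{2\ell}}\bigr)^{q}\,H(z),
\]
with the correction weight
\[
H(z):=(1+|z|)^{(\eta-\rho)q}\,e^{-\frac{(\gamma-\beta)q}{2}|z|^{2\ell}}.
\]
Applying Hölder's inequality with exponents $p/q$ and $p/(p-q)$,
\[
\|f\|^{q}_{F^{q}_{\gamma,\eta}}
\le \|f\|^{q}_{F^{p}_{\beta,\rho}}\,
\Bigl(\int_{\C^n}H(z)^{p/(p-q)}dV(z)\Bigr)^{(p-q)/p}.
\]
If $\beta<\gamma$, the exponential factor in $H^{p/(p-q)}$ makes the integral finite regardless of $\rho$ and $\eta$. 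If $\beta=\gamma$, the integral reduces to $\int_{\C^n}(1+|z|)^{(\eta-\rho)qp/(p-q)}dV(z)$, which is finite precisely when $(\rho-\eta)qp/(p-q)>2n$, i.e.\ when $\rho-\eta>2n(1/q-1/p)$, which is the assumed strict inequality.

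For $q<p=\infty$, I would use directly the definition of the $F^{\infty}_{\beta,\rho}$-norm, which gives the pointwise bound $|f(z)|\le\|f\|_{F^{\infty}_{\beta,\rho}}(1+|z|)^{-\rho}e^{\frac{\beta}{2}|z|^{2\ell}}$. Plugging this in yields
\[
\|f\|^{q}_{F^{q}_{\gamma,\eta}}
\le \|f\|^{q}_{F^{\infty}_{\beta,\rho}}\int_{\C^n}(1+|z|)^{(\eta-\rho)q}e^{-\frac{(\gamma-\beta)q}{2}|z|^{2\ell}}dV(z),
\]
and the integral converges either because $\beta<\gamma$ (exponential decay) or because $\beta=\gamma$ and $(\rho-\eta)q>2n=2nq(1/q-1/p)$.

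There is really no serious obstacle here; the content of the lemma is that the weight estimates given by Hölder/pointwise bounds already match the conditions of the statement. The slightly delicate point is keeping track of the exponents so that the Hölder split lands exactly on $\|f\|_{F^{p}_{\beta,\rho}}$ and on an integrable tail, and checking that the strict inequality $\rho-\eta>2n(1/q-1/p)$ appearing in the statement is exactly the borderline integrability condition for the polynomial factor when $\beta=\gamma$ (whereas the non-strict inequality of Lemma~\ref{lem:condsuffpleq} reflected a different threshold coming from the pointwise estimate of Corollary~\ref{cor:embedFinfty}).
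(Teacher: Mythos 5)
Your proof is correct and follows essentially the same route as the paper: for $1\le q<p<\infty$ the paper also factors the integrand into the $F^{p}_{\beta,\rho}$-density times the correction weight $(1+|z|)^{\eta-\rho}e^{-\frac{\gamma-\beta}{2}|z|^{2\ell}}$ and applies H\"older with exponent $p/q$, landing on the same integrability condition $(\rho-\eta)\frac{pq}{p-q}>2n$; for $p=\infty$ it uses the same pointwise bound from the definition of the $F^{\infty}_{\beta,\rho}$-norm. The only difference is cosmetic (you raise to the $q$-th power before applying H\"older, the paper applies it at the level of $L^q$/$L^p$/$L^{pq/(p-q)}$ norms).
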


\begin{proof}
	Let $f\in F^{p}_{\beta,\rho}$.  Assume first $p=\infty$. In this case $q(\rho-\eta)>2n$, so  the hypotheses on the parameters give
	\begin{align*}
	\|f\|_{F^{q}_{\gamma,\eta}}^q
	&=
	\int_{\C^n}|f(z)|^q(1+|z|)^{\eta q}
	e^{-\frac{\gamma q }{2}|z|^{2\ell}}dV(z)\\
&\lesssim \|f\|_{F^{\infty}_{\beta,\rho}}^q
	\int_{\C^n}(1+|z|)^{-(\rho-\eta)q}
	e^{-\frac{(\gamma-\beta)q }{2}|z|^{2\ell}}dV(z)
	\lesssim \|f\|_{F^{\infty}_{\beta,\rho}}^q.
	\end{align*}
	
	Next assume $p$ finite. In this case 
$(\rho-\eta)\frac{pq}{p-q}>2n$.
 Consider the function 
	\[
	F(z):=
	|f(z)|(1+|z|)^{\eta }e^{-\frac{\gamma }2|z|^{2\ell}}
	=G(z)H(z),
	\]
	where
	\[
	G(z):=|f(z)|(1+|z|)^{\rho }e^{-\frac{\beta }2|z|^{2\ell}}
	\quad\text{and}\quad
	H(z):=
	(1+|z|)^{(\eta  -\rho )}e^{-\frac{\gamma -\beta }{2}|z|^{2\ell}}.
	\]
	By H\"older's inequality with exponent $p/q>1$ we have 
\begin{align*}
\|f\|_{F^{q}_{\gamma,\eta}}&=\|F\|_{L^q}\le
\|G\|_{L^p}\|H\|_{L^{pq/(p-q)}}\\
&= \|f\|_{F^{p}_{\beta,\rho}}
	\left(\int_{\C^n}(1+|z|)^{-(\rho-\eta)\frac{pq}{p-q}}
e^{-\frac{\gamma-\beta}2\frac{pq}{p-q}|z|^{2\ell}}dV(z)\right)^{\frac{p-q}{pq}}.
\end{align*}
Therefore $\|f\|_{F^{q}_{\gamma,\eta}}
\lesssim \|f\|_{F^{p}_{\beta,\rho}}$.
	\end{proof}

\subsection{Necessary conditions for $1\le q<p<\infty$ and  $\beta=\gamma$}\quad\par

\begin{prop}\label{prop:necpgrq}
If $1\le q<p<\infty$ and  
$F^{p}_{\beta,\rho}\hookrightarrow F^{q}_{\beta,\eta}$ 
then
$
2n\left(\frac 1q-\frac 1p\right)< \rho-\eta.
$
\end{prop}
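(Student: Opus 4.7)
The plan is to adapt D.~Luecking's randomization technique, using Khinchine's inequality to turn the operator inequality $\|\cdot\|_{F^{q}_{\beta,\eta}}\le C\|\cdot\|_{F^{p}_{\beta,\rho}}$ into a comparable sequence inequality from which the strict condition $\rho-\eta>2n(\frac 1q-\frac 1p)$ will drop out by duality and a lattice density count. First I fix a sequence $\{z_k\}_{k\in\N}\subset\C^n$ adapted to the Fock geometry: with $\rho_k:=(1+|z_k|)^{-(\ell-1)}$, the points are chosen so that the Euclidean balls $B_k:=B(z_k,c\rho_k)$ are pairwise disjoint while suitable enlargements cover $\C^n$. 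On each $B_k$ any weight $(1+|z|)^\sigma e^{-\tau|z|^{2\ell}}$ is comparable to its value at $z_k$, and the number of $z_k$'s in an annulus $R\le|z|\le 2R$ is comparable to $R^{2n\ell}$.

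I next normalize the reproducing kernels by $e_k(z):=K_\beta(z,z_k)/\|K_\beta(\cdot,z_k)\|_{F^{p}_{\beta,\rho}}$, so $\|e_k\|_{F^{p}_{\beta,\rho}}=1$. Corollary~\ref{cor:qnormBergman} together with Proposition~\ref{prop:pointK} give, for $z\in B_k$, the pointwise estimate
\[
|e_k(z)|\simeq(1+|z_k|)^{2n(\ell-1)/p-\rho}\,e^{\beta|z_k|^{2\ell}/2}.
\]
For a finitely supported sequence $\{a_k\}$ and the Rademacher functions $r_k$ on $[0,1]$, set $f_t(z):=\sum_k r_k(t)\,a_k\,e_k(z)$. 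A Luecking-type synthesis estimate, obtained by applying Khinchine's inequality pointwise, swapping integrals via Fubini and controlling the resulting integral with the off-diagonal decay of $K_\beta$ from Proposition~\ref{prop:pointK}, produces
\[
\int_0^1\|f_t\|_{F^{p}_{\beta,\rho}}^p\,dt\,\lesssim\,\sum_k|a_k|^p.
\]
Applying the embedding pointwise in $t$, raising to the $q$-th power, integrating, and using H\"older's inequality with exponent $p/q>1$ then gives
\[
\int_0^1\|f_t\|_{F^{q}_{\beta,\eta}}^q\,dt\,\lesssim\,\Bigl(\sum_k|a_k|^p\Bigr)^{q/p}.
\]

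On the other hand, Khinchine's inequality applied in $t$ and restriction of the spatial integral to the disjoint $B_k$ yield the lower bound
\[
\int_0^1\|f_t\|_{F^{q}_{\beta,\eta}}^q\,dt\simeq\int_{\C^n}\!\Bigl(\sum_k|a_k|^2|e_k(z)|^2\Bigr)^{q/2}(1+|z|)^{\eta q}e^{-\beta q|z|^{2\ell}/2}\,dV(z)\gtrsim\sum_k|a_k|^q(1+|z_k|)^E,
\]
where $E:=q(\eta-\rho)-2n(\ell-1)(p-q)/p$ is obtained by plugging in the pointwise description of $|e_k|$ on $B_k$ and using $|B_k|\simeq(1+|z_k|)^{-2n(\ell-1)}$. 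Combining both estimates yields, for every finitely supported $\{a_k\}$,
\[
\sum_k|a_k|^q(1+|z_k|)^E\,\le\,C\,\Bigl(\sum_k|a_k|^p\Bigr)^{q/p}.
\]
The classical $\ell^{p/q}$--$\ell^{p/(p-q)}$ duality (test with $a_k$ proportional to $(1+|z_k|)^{Eq/(p-q)}$ on finite subsets and exhaust $\N$) forces $\sum_k(1+|z_k|)^{Ep/(p-q)}<\infty$. By the density count $\simeq R^{2n\ell}$ in the annulus $|z|\simeq R$, this convergence is equivalent to $Ep/(p-q)+2n\ell<0$, which an elementary simplification shows to be exactly $\rho-\eta>2n(1/q-1/p)$.

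The main obstacle is the Luecking synthesis estimate for $\int_0^1\|f_t\|_{F^{p}_{\beta,\rho}}^p\,dt$: when $1\le p\le 2$ it is immediate from the pointwise inclusion $\ell^p\subset\ell^2$ combined with $\|e_k\|_{F^{p}_{\beta,\rho}}=1$, so the real work lies in the regime $p>2$, where one must control $(\sum_k|a_k|^2|e_k(z)|^2)^{p/2}$ against the weight using a Schur-type argument built on the off-diagonal kernel decay of Proposition~\ref{prop:pointK}.
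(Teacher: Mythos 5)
Your overall strategy is the one the paper itself follows --- a lattice $\{z_k\}$ adapted to the radius function $(1+|z|)^{1-\ell}$, normalized kernels $e_k$, Rademacher randomization plus Khinchine's inequality, and then $\ell^{p/q}$--$\ell^{p/(p-q)}$ duality combined with a lattice density count --- and the second half of your argument (the lower bound over the balls $B_k$, the computation of the exponent $E$, and the reduction to $\rho-\eta>2n(\tfrac1q-\tfrac1p)$) is correct and equivalent to the paper's. The genuine gap is in the first half: the synthesis estimate $\int_0^1\|f_t\|_{F^{p}_{\beta,\rho}}^p\,dt\lesssim\sum_k|a_k|^p$, which you yourself single out as the main obstacle, is left unproved in exactly the hard range $p>2$. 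The route you sketch (pointwise Khinchine followed by a ``Schur-type argument built on the off-diagonal kernel decay'') is not carried out, and it would not be routine here: Proposition~\ref{prop:pointK} gives pointwise size estimates of $K_\beta$ according to whether $z\overline w$ lies in the sector $S^{\delta}_N$, but it does not by itself provide the quantitative, integrable off-diagonal decay of $|K_\beta(z,z_k)|e^{-\frac\beta2(|z|^{2\ell}+|z_k|^{2\ell})}$ that a Schur test on the matrix $\{|e_k(z)|\}$ would require. So, as written, the proof is incomplete for $p>2$.

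The paper closes exactly this gap by a different and cheaper device: instead of estimating the randomized average, it proves the deterministic bound $\bigl\|\sum_k c_k e_k\bigr\|_{F^{p}_{\beta,\rho}}\lesssim\|\{c_k\}\|_{\ell^p}$ (Lemma~\ref{lem:atomic1}) by duality. The adjoint of the synthesis map with respect to $\langle\cdot,\cdot\rangle_\beta$ is the sampling operator $g\mapsto\{g(z_k)/\|K_{\beta}(\cdot,z_k)\|_{F^{p}_{\beta,\rho}}\}$, whose boundedness from $F^{p'}_{\beta,-\rho}$ to $\ell^{p'}$ follows from the sub-mean-value estimate of Lemma~\ref{lem:propertiescover}\,\eqref{item:propertiescover2} together with the finite overlap of the balls $B_k$; no off-diagonal kernel decay and no case distinction in $p$ are needed. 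With that deterministic bound, $\|f_t\|_{F^{p}_{\beta,\rho}}\lesssim\|\{a_k\}\|_{\ell^p}$ holds for every $t$ and your averaged estimate is immediate, so you should replace your Khinchine-based synthesis step by this duality argument. Two minor points: your displayed pointwise description of $|e_k|$ on $B_k$ should carry the factor $e^{\frac\beta2|z|^{2\ell}}$ rather than $e^{\frac\beta2|z_k|^{2\ell}}$ (these are not comparable on $B_k$, whose radius is only of order $(1+|z_k|)^{1-\ell}$; the discrepancy happens to cancel against the weight in your final integral, so your exponent $E$ is still correct), and your density count $\#\{k:|z_k|\simeq R\}\simeq R^{2n\ell}$ is equivalent to the paper's conversion of the resulting sum into the integral $\int_{\C^n}(1+|z|)^{-(\rho-\eta)pq/(p-q)}\,dV(z)$.
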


The proof of Proposition \ref{prop:necpgrq} follows from  the ideas in \cite{luecking}. We need some technical results.

For $r>0$, let $\tau_r:\C\to (0,\infty)$ be the function defined by
\begin{equation}\label{eqn:tau}
\tau_r(z):=r(1+|z|)^{1-\ell}
\end{equation}
and let $B_r(z):=B(z,\tau_r(z))$.

Note that   $\tau_r$
is a radius function in the sense of \cite[p.1617-1618]{dallara}, that is,
\begin{equation}\label{eqn:radiusf}
1+|z|\simeq 1+|w|\quad (z\in\C^n,\,\,w\in B_r(z)).
\end{equation}
Then we have:

\begin{lem}[{\cite[Proposition 7]{dallara}}]\label{lem:cover}
	For any $r>0$ there exists a sequence $\{z_k\} $ in $\C^n$ 
	such that the Euclidean  balls $B_k:=B_r(z_k)$ 
	 satisfy:
	\begin{enumerate}
		\item $\cup_k B_k=\C^n$.
		\item The overlapping of  the balls $B_k$ is finite,
		 that is, there exists $N_r\in\N$ such that 
		$\sum_k \mathcal{X}_{B_k}(z)\le N_r$ for any $z\in\C^n$.
		\end{enumerate}
	\end{lem}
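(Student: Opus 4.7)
The plan is a Vitali-type maximal disjoint family argument adapted to the variable radius $\tau_r(z) = r(1+|z|)^{1-\ell}$. The crucial structural input is the comparability \eqref{eqn:radiusf}, which asserts that $\tau_r$ changes only by a bounded factor on each ball $B_r(z)$, effectively reducing the problem to a locally Euclidean situation.

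First I would apply Zorn's lemma to subsets $S \subset \C^n$ with the property that the shrunken balls $\{B(z, \tau_r(z)/c)\}_{z \in S}$ are pairwise disjoint, for a fixed constant $c > 1$ chosen large enough (the precise value to be dictated by \eqref{eqn:radiusf}, see below). This yields a maximal such $S = \{z_k\}_{k \ge 1}$. The family is automatically countable: each shrunken ball is open with positive Lebesgue measure, and any pairwise disjoint family of positive-volume open subsets of $\C^n$ is at most countable.

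For the covering property (i), I would argue by contradiction. Suppose some $w \in \C^n$ lies outside every $B_k = B(z_k, \tau_r(z_k))$; then $|w - z_k| \ge \tau_r(z_k)$ for all $k$. Choosing $c$ sufficiently large in terms of the implicit constants in \eqref{eqn:radiusf}, one checks that $\tau_r(w)$ is comparable to $\tau_r(z_k)$ in the relevant regime and that $B(w, \tau_r(w)/c)$ is disjoint from each $B(z_k, \tau_r(z_k)/c)$; hence $S \cup \{w\}$ would still have pairwise disjoint shrunken balls, contradicting the maximality of $S$.

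For the finite overlap (ii), fix $z \in \C^n$ and set $I(z) = \{k : z \in B_k\}$. For each $k \in I(z)$, the condition $z \in B_r(z_k)$ combined with \eqref{eqn:radiusf} yields $\tau_r(z_k) \simeq \tau_r(z)$ and $|z - z_k| < \tau_r(z_k) \lesssim \tau_r(z)$, so all such centers lie in a single ball $B(z, C\tau_r(z))$. Since the shrunken balls $B(z_k, \tau_r(z_k)/c)$ are pairwise disjoint, each of Lebesgue volume $\simeq \tau_r(z)^{2n}$, and all contained in a ball of comparable volume around $z$, a direct volume count bounds $|I(z)|$ by a constant $N_r$ depending only on $r$, $\ell$, and $n$. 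The main obstacle is pinning down the quantitative choice of the shrinking constant $c$ in step (i): one must extract from \eqref{eqn:radiusf} sharp enough comparability to guarantee that any would-be uncovered point admits its own shrunken ball disjoint from the maximal family; once $c$ is fixed, the rest reduces to the standard Vitali-plus-volume-count template.
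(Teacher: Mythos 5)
Your proof is correct, but it is worth pointing out that the paper itself gives no proof of this lemma at all: it is imported verbatim as Proposition 7 of the cited reference \cite{dallara}, where it is stated for general radius functions. So your self-contained Vitali-type argument (maximal family with pairwise disjoint shrunken balls, covering by maximality, finite overlap by volume counting) is a genuine addition rather than a parallel to anything in the paper; it is essentially the standard proof pattern one would find in the reference, specialized to $\tau_r(z)=r(1+|z|)^{1-\ell}$. The one step you correctly flag as needing care — the choice of the shrinking constant $c$ — can be pinned down cleanly for this concrete $\tau_r$: for $w\in B_r(z)$ one has $(1+r)^{-1}(1+|z|)\le 1+|w|\le(1+r)(1+|z|)$, so \eqref{eqn:radiusf} holds with constant $1+r$ and hence $\tau_r(w)\le C_0\,\tau_r(z)$ with $C_0=(1+r)^{\ell-1}$ whenever $w\in B_r(z)$. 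Then, if $B(w,\tau_r(w)/c)$ met some $B(z_k,\tau_r(z_k)/c)$, either $\tau_r(w)\le\tau_r(z_k)$, in which case $|w-z_k|<2\tau_r(z_k)/c<\tau_r(z_k)$ for $c>2$, contradicting $w\notin B_k$; or $\tau_r(w)>\tau_r(z_k)$, in which case $|w-z_k|<2\tau_r(w)/c\le\tau_r(w)$ for $c\ge2$, so $z_k\in B_r(w)$, whence $\tau_r(w)\le C_0\tau_r(z_k)$ and $|w-z_k|<2C_0\tau_r(z_k)/c<\tau_r(z_k)$ once $c>2C_0$ — again a contradiction. Thus $c>2(1+r)^{\ell-1}$ suffices, and with that choice your covering step, the finite-overlap volume count (all centers $z_k$ with $z\in B_k$ lie in $B(z,C_0\tau_r(z))$ and carry disjoint balls of volume $\gtrsim\tau_r(z)^{2n}$), and the countability via disjoint open sets all go through. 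Your proof buys self-containedness; the paper's citation buys brevity and the greater generality of Dall'Ara's statement, which applies to any radius function satisfying \eqref{eqn:radiusf}, not just this explicit one.
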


The following lemma states a subharmonic type estimate.

\begin{lem}\label{lem:propertiescover}\quad\par

	\begin{enumerate}
		\item \label{item:propertiescover1}
There exists $r>0$ such that 
		\[
		 |K_\alpha(z,w)|e^{-\frac{\alpha}2|w|^{2\ell}}
		e^{-\frac{\alpha}2|z|^{2\ell}}
	\simeq (1+|z|)^{2n(\ell-1)} 
		\quad (w\in B_r(z)).
		\]
	
		\item \label{item:propertiescover2} Let $1\le p<\infty$, $\rho\in\R$ and $r>0$. There exists $C=C_{\alpha,p,\rho,r}>0$ such that 
		\[
		|f(z)|^p(1+|z|)^{\rho p-2n(\ell-1)}e^{-\frac{\alpha p}2|z|^{2\ell}}
		\le C\int_{B_r(z)}	|f(w)|^p(1+|w|)^{\rho p}e^{-\frac{\alpha p}2|w|^{2\ell}}\,dV(w),
		\]
for any $z\in\C^n$.
	\end{enumerate}
	\end{lem}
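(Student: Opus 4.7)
My plan for part~\eqref{item:propertiescover1} is to invoke the pointwise kernel estimate of Proposition~\ref{prop:pointK} and then to verify three auxiliary facts uniformly for $w\in B_r(z)$ with $r$ taken small enough: (a) $z\bar w\in S^\delta_N$; (b) $1+|z\bar w|\simeq(1+|z|)^2$; and (c) $\Re((z\bar w)^\ell)-\tfrac12|z|^{2\ell}-\tfrac12|w|^{2\ell}=O(1)$. For (a) and (b) I would write $z\bar w=|z|^2+z\overline{w-z}$, with $|z\overline{w-z}|\le r|z|(1+|z|)^{1-\ell}$: this perturbation of the positive real $|z|^2$ is small in absolute value when $|z|$ is bounded (so $z\bar w\in D(0,\delta)$ once $r$ is small) and small relative to $|z|^2$ when $|z|$ is large (so $z\bar w$ stays in the sector $S_N$). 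For (c) I would set $h=w-z$ and apply the binomial expansion $(1+\epsilon)^\ell=1+\ell\epsilon+O(\epsilon^2)$ with $\epsilon=\bar h/\bar z$ (small for $|z|$ large) to obtain
\[
\Re((z\bar w)^\ell)-|z|^{2\ell}=\ell|z|^{2\ell-2}\Re(z\bar h)+O(|z|^{2\ell-2}|h|^2),
\]
and analogously, expanding $|w|^{2\ell}=(|z|^2+(|w|^2-|z|^2))^\ell$,
\[
|w|^{2\ell}-|z|^{2\ell}=2\ell|z|^{2\ell-2}\Re(z\bar h)+O(|z|^{2\ell-2}|h|^2).
\]
Subtracting half of the second identity from the first, the linear terms cancel and the quadratic remainder is $O(r^2)$ since $|z|^{2\ell-2}|h|^2\le(1+|z|)^{2\ell-2}\cdot r^2(1+|z|)^{2-2\ell}=r^2$. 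The range of bounded $|z|$ is dispatched by continuity of $H_\alpha$ together with the positivity of $H_\alpha$ on $[0,\infty)$.

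For part~\eqref{item:propertiescover2}, the plan is to fix $r_0$ for which part~\eqref{item:propertiescover1} holds. Since $|K_\alpha(\cdot,z)|>0$ on $B_{r_0}(z)$ by part~\eqref{item:propertiescover1}, for any $r\le r_0$ the quotient $g_z(w):=f(w)/K_\alpha(w,z)$ is holomorphic on $B_r(z)$, so $|g_z|^p$ is plurisubharmonic and the sub-mean-value inequality yields
\[
\frac{|f(z)|^p}{|K_\alpha(z,z)|^p}\le\frac{1}{|B_r(z)|}\int_{B_r(z)}\frac{|f(w)|^p}{|K_\alpha(w,z)|^p}\,dV(w).
\]
I would then substitute the estimates $|K_\alpha(z,z)|\simeq(1+|z|)^{2n(\ell-1)}e^{\alpha|z|^{2\ell}}$ (part~\eqref{item:propertiescover1} with $w=z$) and $|K_\alpha(w,z)|\simeq(1+|z|)^{2n(\ell-1)}e^{\alpha(|z|^{2\ell}+|w|^{2\ell})/2}$ on $B_r(z)$, together with the volume $|B_r(z)|\simeq(1+|z|)^{2n(1-\ell)}$, and finally use~\eqref{eqn:radiusf} to turn the constant factor $(1+|z|)^{\rho p}$ into $(1+|w|)^{\rho p}$ inside the integral; an elementary bookkeeping of the exponents of $(1+|z|)$ and of the exponentials then produces exactly the target inequality. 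For $r>r_0$ the conclusion follows at once from the inclusion $B_{r_0}(z)\subset B_r(z)$.

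The main obstacle will be the bounded-oscillation identity (c) in part~\eqref{item:propertiescover1}: both the cancellation of the linear Taylor terms and the control of the quadratic remainder are delicate, and their uniformity hinges on the precise calibration of the radius $\tau_r(z)=r(1+|z|)^{1-\ell}$ against the weight exponent $2\ell$, which is exactly what forces $|z|^{2\ell-2}|h|^2$ to be bounded by $r^2$. Once (c) is in place, part~\eqref{item:propertiescover2} is conceptually clean: dividing $f$ by the entire, non-vanishing factor $K_\alpha(\cdot,z)$ absorbs the weight $e^{-\alpha|z|^{2\ell}/2}$ even when $\ell\notin\N$, the case in which no elementary holomorphic compensator such as $e^{\alpha(\bar z w)^\ell/2}$ is available; by contrast, plain subharmonicity of $|f|^p$ alone would be insufficient, because $|w|^{2\ell}$ oscillates too much on $B_r(z)$ to permit a direct constant-weight majorization.
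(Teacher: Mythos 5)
Your proposal is correct, and for part~\eqref{item:propertiescover1} it is essentially the paper's argument: both reduce to the estimate $|H_\alpha(\lambda)|\simeq(1+|\lambda|)^{n(\ell-1)}\bigl|e^{\alpha\lambda^\ell}\bigr|$ on $S^\delta_N$ and to showing that $\alpha\Re((z\overline w)^\ell)-\tfrac{\alpha}2|z|^{2\ell}-\tfrac{\alpha}2|w|^{2\ell}=O(1)$ on $B_r(z)$. The paper gets this, after a unitary rotation, from the exact identity expressing that quantity as $-\tfrac{\alpha}2\bigl||z|^\ell-w_1^\ell\bigr|^2-\tfrac{\alpha}2\bigl[(|w_1|^2+|w'|^2)^\ell-|w_1|^{2\ell}\bigr]$ plus the mean value theorem, while you obtain the same cancellation from a second-order Taylor expansion; the two computations are equivalent. (One small slip: for $|z|$ bounded but not small, $z\overline w$ need not lie in $D(0,\delta)$; the correct split is $|z|<\epsilon_0$, which puts $z\overline w$ in the disc, versus $|z|\ge\epsilon_0$, where your relative-perturbation argument places $z\overline w$ in the sector $S_N$ --- that argument is not restricted to large $|z|$.) Where you genuinely diverge is part~\eqref{item:propertiescover2}. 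The paper follows Dall'Ara and Andersson: it solves a $\partial\overline{\partial}$-equation on $B_r(z)$ to produce a holomorphic $h_z$ with $\Re h_z+|w|^{2\ell}$ uniformly bounded, and applies the sub-mean-value inequality to the plurisubharmonic function $|fe^{\frac{\alpha}{2}h_z}|^p$. You instead take $K_\alpha(\cdot,z)$ itself as the holomorphic compensator, which part~\eqref{item:propertiescover1} shows is zero-free on $B_{r_0}(z)$ with modulus comparable to $(1+|z|)^{2n(\ell-1)}e^{\frac{\alpha}{2}(|z|^{2\ell}+|w|^{2\ell})}$, so that after the sub-mean-value inequality for $|f/K_\alpha(\cdot,z)|^p$ the polynomial factor cancels against $|B_r(z)|^{-1}\simeq r^{-2n}(1+|z|)^{2n(\ell-1)}$ and the exponentials bookkeep to the stated inequality. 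Your route is shorter and self-contained, avoiding the $\partial\overline{\partial}$-machinery and the rescaling of the unit-ball estimate, at the price of being tied to a weight for which the kernel estimate of part~\eqref{item:propertiescover1} is available; the paper's construction is the general-purpose one. Your reductions of $r>r_0$ to $r=r_0$ by inclusion and of general $\rho$ to $\rho=0$ via \eqref{eqn:radiusf} match the paper, and your closing remark --- that plain subharmonicity of $|f|^p$ fails because $|w|^{2\ell}$ oscillates too much on $B_r(z)$, and that no elementary compensator like $e^{\frac{\alpha}{2}(\overline z w)^\ell}$ exists for non-integer $\ell$ --- is exactly the right motivation for why some holomorphic compensator is needed.
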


\begin{proof}
We begin proving \eqref{item:propertiescover1}.
By Remark \ref{rem:pointwB}, we may assume that 
$z=(|z|,0,\cdots,0)$. Then we have to prove that 
	\begin{equation}\label{eqn:propertiescover1}
		 |H_\alpha(|z|w_1)|e^{-\frac{\alpha}2|w|^{2\ell}}
		e^{-\frac{\alpha}2|z|^{2\ell}}
	\simeq (1+|z|)^{2n(\ell-1)}
		\quad (w\in B_r(z)).
		\end{equation}

By Corollary \ref{cor:pointwB}, there exist $\delta>0$ and $N>2$ satisfying \eqref{eqn:simeq:estimate:H}.
For $r>0$ small enough we have
$
|z|w_1\in S^\delta_N 
$,
 for any $z\in\C^n$ and $w\in B_r(z)$.
By \eqref{eqn:simeq:estimate:H},
\begin{equation}\label{eqn:propertiescover2}
 |H_\alpha(|z|w_1)|\simeq 	(1+|z||w_1|)^{n(\ell-1)}e^{\alpha|z|^\ell\Re w_1^\ell}	\quad (w\in B_r(z)).
\end{equation}

In particular for $|z|\le 2r$ the terms in 
 \eqref{eqn:propertiescover1} 
are comparable to a positive constant and there is nothing to prove.

Now assume $|z|> 2r$. 
In this case,  $|w_1|\simeq  |z|$ for $w\in B_r(z)$. Hence, by \eqref{eqn:propertiescover2}, the equivalence  \eqref{eqn:propertiescover1} will be a consequence of 
\begin{equation}\label{eqn:propertiescover3}
e^{\alpha|z|^\ell\Re w_1^\ell}e^{-\frac{\alpha}2|w|^{2\ell}}
		e^{-\frac{\alpha}2|z|^{2\ell}}\simeq 1	\quad (w\in B_r(z)).
\end{equation}
First note that 
\begin{align*}
e^{\alpha|z|^\ell\Re w_1^\ell}e^{-\frac{\alpha}2|w|^{2\ell}}
		e^{-\frac{\alpha}2|z|^{2\ell}}
&=e^{\alpha|z|^\ell\Re w_1^\ell}
e^{-\frac{\alpha}2(|w_1|^2+|w'|^2)^{\ell}}
		e^{-\frac{\alpha}2|z|^{2\ell}}\\
&=e^{-\frac{\alpha}2||z|^\ell-w_1^\ell|^2}
e^{-\frac{\alpha}2[(|w_1|^2+|w'|^2)^\ell-|w_1|^{2\ell}]}.
\end{align*}
 By mean value theorem, for $w\in B_r(z)$ we have 
\[
0\le ||z|^\ell-w_1^\ell|\lesssim (|z|+r(1+|z|)^{1-\ell})^{\ell-1}(1+|z|)^{1-\ell}\simeq 1
\]
and 
\[
(|w_1|^2+|w'|^2)^\ell-|w_1|^{2\ell}\lesssim 
(|w_1|^2+|w'|^2)^{\ell-1}|w'|^2\lesssim 
|z|^{2(\ell-1)}(1+|z|)^{2(1-\ell)}\simeq 1,
\]
we obtain \eqref{eqn:propertiescover3}.

	In order to prove part \eqref{item:propertiescover2}, note that, by \eqref{eqn:radiusf}, the case $\rho\ne 0$  follows from the result  for $\rho=0$.   
This last case can be deduced using the arguments in the proofs of Proposition 12 and of Lemma 13 in  \cite{dallara}.

Let $\varphi$ be a real  $\mathcal{C}^2$-function
on the closed unit ball $\overline{B(0,1)}$ of $\C^n$.
It is well known (see for instance \cite{andersson})  that there exists a real $\mathcal{C}^2$-function $\psi$ on $B(0,1)$
such that 
\[
\partial\overline{\partial}\psi=\partial\overline{\partial}\varphi\quad\text{and}\quad \|\psi\|_{L^\infty(B(0,1))}\le C\|\partial\overline{\partial}\varphi\|
_{L^\infty(B(0,1))}.
\]
By rescaling, we get that if $\varphi$ is a real  $\mathcal{C}^2$-function
on the closed  ball $\overline{B(z,R)}$, then  there is a real $\mathcal{C}^2$-function $\psi$ on $B(z,R)$
such that 
\[
\partial\overline{\partial}\psi=\partial\overline{\partial}\varphi\quad\text{and}\quad \|\psi\|_{L^\infty(B(z,R))}\le CR^2\|\partial\overline{\partial}\varphi\|_{L^\infty(B(z,R))}.
\]
Applying this result to the function 
 $\varphi(w)=|w|^{2\ell}$ and to the ball $B_r(z)$  there exists  a real  $\mathcal{C}^2$-function 
$\psi_z$ on $B_r(z)$ such that 
 $\partial\overline{\partial}\psi_z=\partial\overline{\partial}\varphi$ 
and, by \eqref{eqn:radiusf},
\[
\|\psi_z\|_{L^\infty(B_r(z))}
\le C r^2 (1+|z|)^{2(1-\ell)}\sup_{w\in B_r(z)}|w|^{2(\ell-1)}\le C'\,r^2.
\]
Since 
$\psi_z-\varphi$ is a pluriharmonic function on $B_r(z)$, it is the real part of a holomorphic function $h_z$ on $B_r(z)$. 
Thus we have
\begin{align*}
|f(z)|^pe^{-\frac{\alpha p}2|z|^{2\ell}}
&\simeq |f(z)e^{\frac{\alpha}2 h_z(z)}|^p
\le\frac 1{|B_r(z)|}
\int_{B_r(z)}|f(w)e^{\frac{\alpha}2 h_z(w)}|^p\,dV(w)\\
&\simeq (1+|z|)^{2n(\ell-1)}
\int_{B_r(z)}|f(w)|^p
  e^{-\frac{\alpha p}2|z|^{2\ell}}dV(w).\qedhere
\end{align*}

	\end{proof}

\begin{lem}\label{lem:atomic1}
Let $\{z_k\}$ be a sequence satisfying the properties in Lemma \ref{lem:cover}. Then, for $1\le p< \infty$ the map
\[
\{c_k\}\longmapsto \Phi(\{c_k\})(z):=\sum_k c_k\frac{K_{\beta}(z,z_k)}{\|K_{\beta}(z,z_k)\|_{F^{p}_{\beta,\rho}}}
\]
is bounded from the sequence space $\ell^p$ to $F^{p}_{\beta,\rho}$.
\end{lem}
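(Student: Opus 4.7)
The case $p=1$ is immediate: by Minkowski's inequality and the normalization of the atoms $b_k(z):=K_\beta(z,z_k)/\|K_\beta(\cdot,z_k)\|_{F^1_{\beta,\rho}}$, we get
\[
\|\Phi(\{c_k\})\|_{F^1_{\beta,\rho}} \le \sum_k |c_k|\,\|b_k\|_{F^1_{\beta,\rho}} = \sum_k |c_k|.
\]
So the main task is the range $1<p<\infty$, which I would attack by duality.

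By Corollary~\ref{cor:dualF}, the dual of $F^{p}_{\beta,\rho}$ with respect to the pairing $\langle\cdot,\cdot\rangle_\beta$ is $F^{p'}_{\beta,-\rho}$. Using the reproducing property $\overline{g(z_k)}=\langle K_\beta(\cdot,z_k),g\rangle_\beta$ (valid on $F^{p'}_{\beta,-\rho}$ by Corollary~\ref{cor:representation}), the formal adjoint of $\Phi$ is
\[
\Phi^*(g)=\left\{\frac{g(z_k)}{\|K_\beta(\cdot,z_k)\|_{F^p_{\beta,\rho}}}\right\}_k.
\]
Thus the boundedness of $\Phi:\ell^p\to F^p_{\beta,\rho}$ is equivalent to the boundedness of $\Phi^*:F^{p'}_{\beta,-\rho}\to\ell^{p'}$, which, after inserting the explicit norm estimate
\[
\|K_\beta(\cdot,z_k)\|_{F^p_{\beta,\rho}}^{p'}\simeq (1+|z_k|)^{\rho p' + 2n(\ell-1)}\,e^{\frac{\beta p'}{2}|z_k|^{2\ell}}
\]
from Corollary~\ref{cor:qnormBergman}, reduces to the discrete sampling estimate
\[
\sum_k |g(z_k)|^{p'}(1+|z_k|)^{-\rho p' - 2n(\ell-1)}\,e^{-\frac{\beta p'}{2}|z_k|^{2\ell}}\lesssim \|g\|_{F^{p'}_{\beta,-\rho}}^{p'}.
\]

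To establish this sampling inequality I would apply the subharmonic-type pointwise bound of Lemma~\ref{lem:propertiescover}\eqref{item:propertiescover2} (with the exponent $p'$, the weight $-\rho$, and the parameter $\beta$ in place of $\alpha$) centered at each $z_k$:
\[
|g(z_k)|^{p'}(1+|z_k|)^{-\rho p' - 2n(\ell-1)}e^{-\frac{\beta p'}{2}|z_k|^{2\ell}}\lesssim \int_{B_r(z_k)} |g(w)|^{p'}(1+|w|)^{-\rho p'}e^{-\frac{\beta p'}{2}|w|^{2\ell}}\,dV(w).
\]
Summing over $k$ and invoking the finite-overlap property $\sum_k \mathcal{X}_{B_r(z_k)}\le N_r$ from Lemma~\ref{lem:cover} bounds the right-hand side by $N_r\|g\|_{F^{p'}_{\beta,-\rho}}^{p'}$, finishing the proof for $1<p<\infty$.

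The only non-routine ingredient is the sampling inequality above, and its proof is already packaged in Lemmas~\ref{lem:cover} and~\ref{lem:propertiescover}: the genuine work (subharmonic-type control on the balls $B_r(z_k)$ of radius $\tau_r(z_k)$, which encodes the non-radial geometry when $\ell>1$) was carried out there. Once the adjoint of $\Phi$ has been correctly identified via the reproducing kernel and the norm $\|K_\beta(\cdot,z_k)\|_{F^p_{\beta,\rho}}$ has been computed, the argument is essentially an exercise in duality plus a covering lemma, so the main conceptual obstacle is simply verifying the adjoint computation and matching weights/exponents precisely.
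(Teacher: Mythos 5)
Your proposal is correct and follows essentially the same route as the paper: the authors also dispose of $p=1$ directly, and for $1<p<\infty$ they prove that the sampling map $g\mapsto\{g(z_k)/\|K_\beta(\cdot,z_k)\|_{F^p_{\beta,\rho}}\}$ is bounded from $F^{p'}_{\beta,-\rho}$ to $\ell^{p'}$ via Proposition~\ref{prop:pnormBergman}, Lemma~\ref{lem:propertiescover}\eqref{item:propertiescover2} and the finite overlap of the balls, and then identify its adjoint with $\Phi$ by computing on finitely supported sequences and using density (the small point you flag about ``verifying the adjoint computation'' is exactly what they check). The only cosmetic difference is that they phrase it as ``bounded sampling map, hence bounded adjoint $=\Phi$'' rather than ``boundedness of $\Phi$ is equivalent to boundedness of $\Phi^*$'', which avoids having to know a priori that $\Phi$ is well defined.
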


\begin{proof}
	For $p=1$ the result is clear. Assume $p>1$.
	By Corollary \ref{cor:dualF}, the dual of the space 
	$F^{p'}_{\beta,-\rho}$ with respect to the
	 pairing $\langle\cdot,\cdot\rangle_\beta$ 
	 is $F^{p}_{\beta,\rho }$.
	Since the overlapping of the balls $B_k$ is finite, 
Proposition \ref{prop:pnormBergman} and Lemma \ref{lem:propertiescover}\eqref{item:propertiescover2} show that the map
	\[
	g\longmapsto T_{p'}(g):= \bigl\{g(z_k)/
\|K_{\beta}(z,z_k)\|_{F^{p}_{\beta,\rho}}
\bigr\}
	\]
	is bounded from $F^{p'}_{\beta,-\rho}$ to $\ell^{p'}$. Indeed,
\begin{align*}
\|T_{p'}(g)\|_{\ell^{p'}}^{p'}
&\simeq \sum_k |g(z_k)|^{p'}(1+|z_k|)^{-\rho p'-2n(\ell-1)}
e^{-\frac{\beta}2|z_k|^{2\ell}}\\
&\lesssim\sum_k \int_{B_r(z_k)} |g(z)|^{p'}(1+|z|)^{-\rho p'}
e^{-\frac{\beta}2|z|^{2\ell}}dV(z)\simeq \|g\|_{F^{p'}_{\beta,-\rho}}^{p'}
\end{align*}

	So the adjoint map $T^*_{p'}$ of $T_{p'}$, with respect to the pairing
	 $\langle\cdot,\cdot\rangle_\beta$,
	 is bounded from $\ell^p$ to $F^{p}_{\beta,\rho}$. 
	 We are going to show that  $T^*_{p'}=\Phi$.
For $\{c_k\}\in c_{oo}$ (the space of sequences with a finite number of non-zero terms) and $g\in F^{p'}_{\beta,-\rho}$  we have 
	 \begin{align*}
	 	\langle T^*_{p'}\{c_k\},g\rangle_\beta
	 	&=\langle\{c_k\},g(z_k)/
\|K_{\beta}(z,z_k)\|_{F^{p}_{\beta,\rho}}\rangle_{\ell^2}\\
&=\Big\langle \sum_k c_kK_{\beta}(z,z_k)
/\|K_{\beta}(z,z_k)\|_{F^{p}_{\beta,\rho}} ,g\Big\rangle_\beta,
	 	\end{align*}
	since $g(z_k)=\int_{\C^n} g(z)K_{\beta}(z_k,z)e^{-\frac\beta 2|z|^{2\ell}}dV(z)$.
Therefore
	 \[
		T^*_{p'}\{c_k\}=\sum_k c_k\frac{K_{\beta}(z,z_k)}{\|K_{\beta}(z,z_k)\|_{F^{p}_{\beta,\rho}}}\quad (\{c_k\}\in c_{oo}).
\]
Since $c_{oo}$ is dense in $\ell^p$ we conclude that 
$	T^*_{p'}=\Phi$.
	\end{proof}

\begin{proof}[Proof of Proposition \ref{prop:necpgrq}]
Pick $r>0$ satisfying Lemma 
 \ref{lem:propertiescover}
\eqref{item:propertiescover1}, and 
let $\{z_k\}$ be a sequence as  in Lemma \ref{lem:cover}.
Let $\{c_k\}\in \ell^p$ and consider the function 
\[
\Phi_t(\{c_k\})(z)
:=\sum_k c_kr_k(t)\frac{K_{\beta,n}(z,z_k)}
{\|K_{\beta,n}(z,z_k)\|_{F^{p}_{\beta,\rho}}},
\quad 0\le t\le 1,
\]
where $\{r_k(t)\}$ is a sequence of Rademacher functions (see \cite[p.336]{luecking}).
By the hypothesis and Lemma \ref{lem:atomic1}, 
\[
\|\Phi_t(\{c_k\})\|_{F^{q}_{\beta,\eta}}\lesssim
 \|\Phi_t(\{c_k\})\|_{F^{p}_{\beta,\rho}}\lesssim\|\{c_kr_k(t)\}\|_{\ell^p}=\|\{c_k\}\|_{\ell^p}.
 \]
So, by Fubini's theorem and Khinchine's inequality (see \cite[p.336]{luecking})
\begin{align*}
\int_{\C^n}&\left( \sum_{k}|c_k|^2\frac{|K_{\beta,n}(z,z_k)|^2}
{\|K_{\beta,n}(z,z_k)\|^2_{F^{p}_{\beta,\rho}}}
(1+|z|)^{2\eta}e^{-\beta|z|^{2\ell}}\right)^{q/2} dV(z)\\
&\simeq 
\int_0^1\|\Phi_t(\{c_k\})\|_{F^{q}_{\beta,\eta}}^qdt
\lesssim\|\{c_k\}\|_{\ell^p}^{q}.
\end{align*}
By Proposition \ref{prop:pnormBergman} this is equivalent to the fact that 
$I(\{c_k\})\lesssim\|\{c_k\}\|_{\ell^p}^{q}$, where 
\[
I(\{c_k\}):=\int_{\C^n}\left( \sum_{k}|c_k|^2\frac{|K_{\beta,n}(z,z_k)|^2
e^{-\beta|z_k|^{2\ell}}e^{-\beta|z|^{2\ell}}}{
(1+|z_k|)^{2(\rho-\eta) +4n(\ell-1)/p'}}
\right)^{q/2}dV(z).
\]
Now
\[
I(\{c_k\})\gtrsim \int_{\C^n}\left( \sum_{k}|c_k|^2\frac{|K_{\beta,n}(z,z_k)|^2
	e^{-\beta|z_k|^{2\ell}}e^{-\beta|z|^{2\ell}}}{
	(1+|z_k|)^{2(\rho-\eta) +4n(\ell-1)/p'}}\mathcal{X}_{B_k}(z)
\right)^{q/2}dV(z).
\]
Since, by Lemma \ref{lem:cover}, any point $z\in\C^n$ is at most in $N$ balls $B_k$, the equivalence of the $\ell^2$-norm and $\ell^{q/2}$-norm  on $\C^N$ give
\[
I(\{c_k\})\gtrsim \sum_{k} |c_k|^q \int_{B_k} \frac{|K_{\beta,n}(z,z_k)|^q
	e^{-\frac{\beta q}2|z_k|^{2\ell}}e^{-\frac{\beta q}2|z|^{2\ell}}}{
	(1+|z_k|)^{(\rho-\eta) q+2n(\ell-1)q/p'}}
dV(z).
\]
By Lemma \ref{lem:propertiescover}\eqref{item:propertiescover1} 
\[
|K_{\beta,n}(z,z_k)|^q
e^{-\frac{\beta q}2|z_k|^{2\ell}}
e^{-\frac{\beta q}2|z|^{2\ell}}
\simeq (1+|z_k|)^{2n(\ell-1)q}\quad (z\in B_k).
\]
Hence
\begin{align*}
\|\{c_k\}\|_{\ell^p}^{q}
&\gtrsim \sum_{k} |c_k|^q 
	(1+|z_k|)^{-(\rho-\eta) q-2n(\ell-1)(q/p'-q+1)}\\
	&=\sum_{k} |c_k|^q 
	(1+|z_k|)^{-(\rho-\eta) q-2n(\ell-1)(p-q)/p},
\end{align*}
and consequently for any $\{d_k\}\in\ell^{p/q}$, 
\[
\sum_{k} |d_k|
(1+|z_k|)^{-(\rho-\eta)  q-2n(\ell-1)(p-q)/p}\lesssim \|d_k\|_{\ell^{p/q}}.
\]
By the duality of the sequence spaces $(\ell^{p/q})^*=\ell^{p/(p-q)}$, we obtain
\[
\sum_{k} 
(1+|z_k|)^{-(\rho-\eta)  \frac{pq}{p-q}-2n(\ell-1)}<\infty
\]
Since 
\begin{align*}
\infty>
\sum_{k} 
(1+|z_k|)^{-(\rho-\eta)  \frac{pq}{p-q}-2n(\ell-1)}
&\simeq\sum_{k}
\int_{B_k} (1+|z|)^{-(\rho-\eta)  \frac{pq}{p-q}}dV(z)\\
&\simeq \int_{\C^n} (1+|z|)^{-(\rho-\eta)  \frac{pq}{p-q}}
dV(z),
\end{align*}
we conclude that $-(\rho-\eta)  \frac{pq}{p-q}<-2n$.
This ends the proof.
\end{proof}

\subsection{Necessary condition for $1\le q<p=\infty$ and $\beta=\gamma$}\quad\par

In this section we extend Proposition 
 \ref{prop:necpgrq}
to the case $p=\infty$.

\begin{prop}\label{prop:necinf>q}
If $1\le q<\infty$ and  
$F^{\infty}_{\beta,\rho}
\hookrightarrow F^{q}_{\beta,\eta}$ 
then
$
\frac {2n}q< \rho-\eta.
$
\end{prop}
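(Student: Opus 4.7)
The plan is to rerun the Luecking--Rademacher argument from the proof of Proposition~\ref{prop:necpgrq} directly at $p=\infty$. A naive extrapolation---composing $F^p_{\beta,\rho+2n(\ell-1)/p}\hookrightarrow F^\infty_{\beta,\rho}\hookrightarrow F^q_{\beta,\eta}$ (from Corollary~\ref{cor:embedFinfty}) and invoking Proposition~\ref{prop:necpgrq}---only yields $\rho-\eta\ge 2n/q$, because the gap term $2n\ell/p$ vanishes as $p\to\infty$, so strict inequality forces us to run the argument at $p=\infty$ itself. The new ingredient is an $\ell^\infty$ version of the atomic decomposition Lemma~\ref{lem:atomic1}: with the covering $\{B_k=B_r(z_k)\}$ from Lemma~\ref{lem:cover}, I would prove that
\[
\Phi(\{c_k\})(z):=\sum_k c_k\,\frac{K_\beta(z,z_k)}{\|K_\beta(\cdot,z_k)\|_{F^\infty_{\beta,\rho}}}
\]
is bounded from $\ell^\infty$ to $F^\infty_{\beta,\rho}$. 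Mimicking the proof of Lemma~\ref{lem:atomic1}, Lemma~\ref{lem:propertiescover}\eqref{item:propertiescover2} at $p=1$ (with $\rho\to-\rho$) shows that the sampling map $T_1:g\mapsto \{g(z_k)/\|K_\beta(\cdot,z_k)\|_{F^\infty_{\beta,\rho}}\}$ is bounded $F^1_{\beta,-\rho}\to\ell^1$; $\Phi$ is its formal adjoint under the pairing $\langle\cdot,\cdot\rangle_\beta$, and the $F^\infty_{\beta,\rho}$-bound for $\Phi(\{c_k\})$ is obtained by testing against the reproducing kernel $K_{\beta,z}\in F^1_{\beta,-\rho}$, whose norm is controlled by Corollary~\ref{cor:qnormBergman}.

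With this in hand, take Rademacher signs $r_k(t)$. The hypothesised embedding yields $\|\Phi(\{c_k r_k(t)\})\|_{F^q_{\beta,\eta}}\lesssim\|\{c_k\}\|_{\ell^\infty}$ uniformly in $t$; integrating in $t$ and applying Fubini and Khinchine's inequality produces
\[
\int_{\C^n}\Bigl(\sum_k |c_k|^2\frac{|K_\beta(z,z_k)|^2}{\|K_\beta(\cdot,z_k)\|^2_{F^\infty_{\beta,\rho}}}\Bigr)^{q/2}(1+|z|)^{\eta q}e^{-\beta q|z|^{2\ell}/2}\,dV(z)\lesssim\|\{c_k\}\|_{\ell^\infty}^q.
\]
Dropping all but the $k=j$ term inside the square-function sum and restricting to $B_j$, Lemma~\ref{lem:propertiescover}\eqref{item:propertiescover1} gives $|K_\beta(z,z_j)|e^{-\beta(|z|^{2\ell}+|z_j|^{2\ell})/2}\simeq(1+|z_j|)^{2n(\ell-1)}$ on $B_j$, Corollary~\ref{cor:qnormBergman} gives $\|K_\beta(\cdot,z_j)\|_{F^\infty_{\beta,\rho}}\simeq(1+|z_j|)^{\rho+2n(\ell-1)}e^{\beta|z_j|^{2\ell}/2}$, and $|B_j|\simeq(1+|z_j|)^{-2n(\ell-1)}$; choosing $c_k\equiv 1$ (this is where $p=\infty$ is actually cleaner than $p<\infty$, since we need not invoke $\ell^{p/q}$-duality as in the end of the proof of Proposition~\ref{prop:necpgrq}) collapses the estimate to
\[
\sum_j (1+|z_j|)^{-(\rho-\eta)q-2n(\ell-1)}<\infty.
\]

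Finally, via the volume estimate $|B_j|\simeq(1+|z_j|)^{-2n(\ell-1)}$ together with the finite overlap of $\{B_j\}$ (Lemma~\ref{lem:cover}), this sum is comparable to $\int_{\C^n}(1+|z|)^{-(\rho-\eta)q}\,dV(z)$, whose convergence at infinity forces $(\rho-\eta)q>2n$, i.e.\ $2n/q<\rho-\eta$. The genuine obstacle is the $\ell^\infty$ atomic decomposition: Corollary~\ref{cor:dualF} supplies duality only for $1\le p<\infty$, so the boundedness $\Phi:\ell^\infty\to F^\infty_{\beta,\rho}$ is not an immediate consequence of Lemma~\ref{lem:atomic1} and has to be verified by hand through the formal adjoint of $T_1$. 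Once it is in place, the remainder of the proof follows the blueprint of Proposition~\ref{prop:necpgrq} line for line.
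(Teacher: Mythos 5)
Your proof is correct, but it takes a genuinely different route from the paper's. The paper does \emph{not} rerun the Luecking argument at $p=\infty$; it extrapolates from the finite-$p$ case by complex interpolation: since $(F^{q}_{\beta,\rho},F^{\infty}_{\beta,\rho})$ and $(F^{q}_{\beta,\rho},F^{q}_{\beta,\eta})$ are retracts of the corresponding weighted $L^p$-pairs (Proposition \ref{prop:PLpontoFp} together with Lemma \ref{lem:retract}), one gets $(F^{q}_{\beta,\rho},F^{\infty}_{\beta,\rho})_{[\theta]}=F^{s}_{\beta,\rho}$ and $(F^{q}_{\beta,\rho},F^{q}_{\beta,\eta})_{[\theta]}=F^{q}_{\beta,(1-\theta)\rho+\theta\eta}$ with $\tfrac1s=\tfrac{1-\theta}q$, so the hypothesis yields $F^{s}_{\beta,\rho}\hookrightarrow F^{q}_{\beta,(1-\theta)\rho+\theta\eta}$ with $q<s<\infty$, and Proposition \ref{prop:necpgrq} applied there gives exactly $2n/q<\rho-\eta$ --- the interpolated target keeps the weight gap $\theta(\rho-\eta)$, which is why no strictness is lost, unlike in the naive extrapolation you rightly discard. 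Your alternative hinges on the boundedness of $\Phi:\ell^\infty\to F^{\infty}_{\beta,\rho}$, and that does hold; one caveat is that the adjoint identification cannot rest on density of $c_{00}$ in $\ell^\infty$ (which fails), so you must either use the reproducing formula $h(z)=\langle h,K_{\beta,z}\rangle_\beta$ for $h\in F^{\infty}_{\beta,\rho}$ (Corollary \ref{cor:representation}) to see that $T_1^*\{c_k\}=\Phi(\{c_k\})$ on all of $\ell^\infty$, or, more directly, observe that Lemma \ref{lem:propertiescover}\eqref{item:propertiescover2} with $p=1$ and $\rho\to-\rho$, applied to the holomorphic function $K_\beta(\cdot,z)$ at the points $z_k$, combined with the finite overlap and Proposition \ref{prop:pnormBergman}, gives
\[
\sum_k\frac{|K_\beta(z,z_k)|}{\|K_\beta(\cdot,z_k)\|_{F^{\infty}_{\beta,\rho}}}
\lesssim\|K_\beta(\cdot,z)\|_{F^{1}_{\beta,-\rho}}
\simeq(1+|z|)^{-\rho}e^{\frac\beta2|z|^{2\ell}},
\]
so the series converges absolutely and $\|\Phi(\{c_k\})\|_{F^{\infty}_{\beta,\rho}}\lesssim\|\{c_k\}\|_{\ell^\infty}$ with no duality needed. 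Your endgame with $c_k\equiv1$ is then correct and indeed simpler than in Proposition \ref{prop:necpgrq}, since the $\ell^{p/q}$-duality step disappears. In sum: the paper's argument is shorter once the interpolation identities of Lemma \ref{lem:interpolation} are in hand (and those are of independent interest), while yours is more self-contained and quantitative, trading interpolation theory for an $\ell^\infty$ atomic decomposition.
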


The necessary condition will be obtained from the
 case $1\le q<p<\infty$ by complex interpolation. 
In particular we will use the Riesz-Thorin theorem 
and the  following well-known result (see for instance 
\cite[Lemma 7.11]{kalton-mayboroda-mitrea}).

\begin{lem}\label{lem:retract} 
Let $(Y_0,Y_1)$  and $(X_0,X_1)$ be admissible
 pairs of Banach spaces. 
Assume that $(Y_0,Y_1)$ is a
 retract of $(X_0,X_1)$, that is, there exist
 bounded linear operators $E:Y_j\to X_j$ 
and $R:X_j\to Y_j$ such that 
$R\circ E$ is the identity operator on $Y_j$,  $j=0,1$. 
 Then
$(Y_0,Y_1)_{[\theta]}=R((X_0,X_1)_{[\theta]})$.
\end{lem}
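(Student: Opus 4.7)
The plan is to deduce the lemma directly from the fundamental interpolation property of the complex method: any bounded linear operator between admissible pairs interpolates to a bounded operator between the corresponding complex interpolation spaces, with norm controlled by the geometric mean of the two endpoint norms. Because this lemma is entirely a statement in abstract interpolation theory, the proof will be a short chase of definitions; the only substantive input is this interpolation property, which is classical.

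First I would apply the interpolation property to the two given maps. Since $E:Y_j\to X_j$ is bounded for $j=0,1$, one obtains a bounded linear operator $E:(Y_0,Y_1)_{[\theta]}\to (X_0,X_1)_{[\theta]}$. Analogously, $R:X_j\to Y_j$ bounded for $j=0,1$ yields a bounded linear operator $R:(X_0,X_1)_{[\theta]}\to (Y_0,Y_1)_{[\theta]}$. Hence the composition $R\circ E$ is a bounded linear operator on $(Y_0,Y_1)_{[\theta]}$.

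Next I would argue that $R\circ E$ is the identity on $(Y_0,Y_1)_{[\theta]}$. By hypothesis $R\circ E$ equals the identity on $Y_0$ and on $Y_1$, hence on the intersection $Y_0\cap Y_1$. Since $Y_0\cap Y_1$ is dense in $(Y_0,Y_1)_{[\theta]}$ (a standard property of the complex interpolation space) and $R\circ E$ is continuous on $(Y_0,Y_1)_{[\theta]}$, it coincides with the identity there.

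Finally, I would establish the set-theoretic equality by a double inclusion. For any $y\in (Y_0,Y_1)_{[\theta]}$, the identity $y=R(E(y))$ together with $E(y)\in (X_0,X_1)_{[\theta]}$ gives $y\in R((X_0,X_1)_{[\theta]})$, proving $(Y_0,Y_1)_{[\theta]}\subset R((X_0,X_1)_{[\theta]})$. Conversely, for any $x\in (X_0,X_1)_{[\theta]}$, the boundedness of $R:(X_0,X_1)_{[\theta]}\to (Y_0,Y_1)_{[\theta]}$ forces $R(x)\in (Y_0,Y_1)_{[\theta]}$, giving the reverse inclusion. The main (and only real) obstacle is invoking the interpolation property correctly and the density of $Y_0\cap Y_1$ in $(Y_0,Y_1)_{[\theta]}$; both are standard facts from the theory of the complex method (see, e.g., Bergh--L\"ofstr\"om), so no genuine difficulty arises.
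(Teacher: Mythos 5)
Your proof is correct, but there is nothing in the paper to compare it with: the paper states this lemma as a known result, citing \cite[Lemma 7.11]{kalton-mayboroda-mitrea}, and gives no proof of its own. Your argument is the standard one, and it is complete modulo the two classical facts you invoke, namely the interpolation theorem for the complex method (a couple map, i.e.\ a single linear operator bounded between both pairs of endpoint spaces, restricts to a bounded map of the $[\theta]$-spaces) and the density of $Y_0\cap Y_1$ in $(Y_0,Y_1)_{[\theta]}$. Two small remarks. First, the density step, while valid, is not needed: the interpolated operator is just the restriction of the operator defined on the sum space, and since $R\circ E$ is the identity on $Y_0$ and on $Y_1$, linearity makes it the identity on all of $Y_0+Y_1$, hence in particular on the subspace $(Y_0,Y_1)_{[\theta]}$; this avoids any appeal to approximation. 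Second, if the stated equality is to be read as an identification of Banach spaces up to equivalence of norms --- which is how it is used later, in Lemma \ref{lem:interpolation}, to identify $(F^{q}_{\beta,\rho},F^{\infty}_{\beta,\rho})_{[\theta]}$ with a concrete Fock--Sobolev space --- you should add one line: for $x\in(X_0,X_1)_{[\theta]}$ one has $\|R(x)\|_{(Y_0,Y_1)_{[\theta]}}\le \|R\|\,\|x\|_{(X_0,X_1)_{[\theta]}}$, and conversely every $y\in(Y_0,Y_1)_{[\theta]}$ satisfies $y=R(E(y))$ with $\|E(y)\|_{(X_0,X_1)_{[\theta]}}\le\|E\|\,\|y\|_{(Y_0,Y_1)_{[\theta]}}$, so the $[\theta]$-norm is equivalent to the natural image norm on $R((X_0,X_1)_{[\theta]})$. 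Both points are cosmetic; your argument stands as written.
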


\begin{lem}\label{lem:interpolation}
Let $1\le q<\infty$ and let $\theta\in(0,1)$. 
If $\frac 1s=\frac{1-\theta}q$ then
\[
(F^{q}_{\beta,\rho},
 F^{\infty}_{\beta,\rho})_{[\theta]}
=F^{s}_{\beta,\rho}
\quad\text{and}\quad 
(F^{q}_{\beta,\rho},
 F^{q}_{\beta,\eta})_{[\theta]}
=F^{q}_{\beta,(1-\theta)\rho+\theta\eta}.
\]
\end{lem}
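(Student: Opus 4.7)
The plan is to reduce both interpolation identities to the corresponding classical interpolation statements for weighted $L^p$ spaces via the retract device of Lemma~\ref{lem:retract}, using the Bergman projection $P_\beta$ as the retraction. Concretely, for any $1\le p\le\infty$ and $\rho\in\R$, I would take $E:F^p_{\beta,\rho}\hookrightarrow L^p_{\beta,\rho}$ to be the inclusion and $R=P_\beta:L^p_{\beta,\rho}\to F^p_{\beta,\rho}$ the Bergman projection. By Proposition~\ref{prop:PLpontoFp} both maps are bounded for every $p$ and every weight index involved in the two pairs. Moreover, since $\beta<2\beta$, Corollary~\ref{cor:representation} gives $P_\beta f=f$ for every $f\in F^p_{\beta,\rho}$, so $R\circ E$ is the identity. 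Hence both pairs $(F^q_{\beta,\rho},F^\infty_{\beta,\rho})$ and $(F^q_{\beta,\rho},F^q_{\beta,\eta})$ are retracts of $(L^q_{\beta,\rho},L^\infty_{\beta,\rho})$ and $(L^q_{\beta,\rho},L^q_{\beta,\eta})$ respectively.

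Next I would establish the two underlying $L^p$-interpolations. For the first pair, observe that the multiplication map $f\mapsto f(z)(1+|z|)^\rho e^{-\frac{\beta}{2}|z|^{2\ell}}$ is an isometric isomorphism from $L^p_{\beta,\rho}$ onto the unweighted $L^p(\C^n,dV)$, \emph{independently of~$p$}. Hence the classical Riesz--Thorin theorem transfers at once and gives
\[
(L^q_{\beta,\rho},L^\infty_{\beta,\rho})_{[\theta]}=L^s_{\beta,\rho},\qquad \tfrac1s=\tfrac{1-\theta}{q}.
\]
For the second pair, both spaces share the exponent $q$ and the same exponential weight, so one applies the standard Calderón--Stein--Weiss interpolation for weighted $L^q$ spaces, namely $(L^q(w_0\,dV),L^q(w_1\,dV))_{[\theta]}=L^q(w_0^{1-\theta}w_1^\theta\,dV)$. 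Writing $w_0(z)=(1+|z|)^{\rho q}e^{-(\beta q/2)|z|^{2\ell}}$ and $w_1(z)=(1+|z|)^{\eta q}e^{-(\beta q/2)|z|^{2\ell}}$, a direct computation yields
\[
w_0^{1-\theta}w_1^{\theta}=(1+|z|)^{((1-\theta)\rho+\theta\eta)q}\,e^{-(\beta q/2)|z|^{2\ell}},
\]
which is exactly the weight for $L^q_{\beta,(1-\theta)\rho+\theta\eta}$.

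Combining these ingredients through Lemma~\ref{lem:retract} concludes the proof: since $P_\beta$ maps $L^p_{\beta,\rho}$ \emph{onto} $F^p_{\beta,\rho}$, I obtain
\[
(F^q_{\beta,\rho},F^\infty_{\beta,\rho})_{[\theta]}=P_\beta\bigl((L^q_{\beta,\rho},L^\infty_{\beta,\rho})_{[\theta]}\bigr)=P_\beta(L^s_{\beta,\rho})=F^s_{\beta,\rho},
\]
and analogously $(F^q_{\beta,\rho},F^q_{\beta,\eta})_{[\theta]}=F^q_{\beta,(1-\theta)\rho+\theta\eta}$. I do not anticipate a substantive obstacle here: all boundedness/surjectivity statements needed are already at hand (Proposition~\ref{prop:PLpontoFp} together with Corollary~\ref{cor:representation}), and the only point that requires a little care is the isometric identification used for the first pair, which works precisely because the weight defining $L^p_{\beta,\rho}$ is a $p$-th power of a $p$-independent function.
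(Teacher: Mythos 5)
Your proposal is correct and follows essentially the same route as the paper: the isometric identification with unweighted $L^p$ plus Riesz--Thorin for the first pair, the Stein--Weiss weighted interpolation for the second, and the retract argument via $P_\beta$ (Lemma~\ref{lem:retract} with Proposition~\ref{prop:PLpontoFp}). Your explicit appeal to Corollary~\ref{cor:representation} to verify $R\circ E=\mathrm{id}$ is a small but welcome addition the paper leaves implicit.
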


\begin{proof}
Observe that the map 
$\Phi(f)(z)
:= f(z)e^{\frac\beta 2|z|^{2\ell}}(1+|z|)^{-\rho}$
 is a linear isometry 
from $L^r$ onto $L^{r}_{\beta,\rho}$, $1\le r\le\infty$.
So by Lemma \ref{lem:interpolation}
 and the  Riesz-Thorin theorem, we obtain 
\[
(L^{q}_{\beta,\rho}, L^{\infty}_{\beta,\rho})_{[\theta]}
=\Phi((L^q,L^\infty)_{[\theta]})
=\Phi(L^s)=L^{s}_{\beta,\rho}
\]
By Proposition \ref{prop:PLpontoFp}, for $1\le r\le\infty$,  $(F^{q}_{\beta,\rho}, F^{\infty}_{\beta,\rho})$ is a retract of $(L^{q}_{\beta,\rho}, L^{\infty}_{\beta,\rho})$ and so 
\[
(F^{q}_{\beta,\rho}, F^{\infty}_{\beta,\rho})_{[\theta]}
=P_\beta((L^{q}_{\beta,\rho}, L^{\infty}_{\beta,\rho})_{[\theta]})
=P_\beta(L^{s}_{\beta,\rho})=F^{s}_{\beta,\rho},
\]
which proves the first interpolation identity.

In order to prove the second identity, by Theorem \cite[Theorem 5.5.3]{berg-lofstrom} we have
\begin{align*}
(L^{q}_{\beta,\rho}, L^{q}_{\beta,\eta})_{[\theta]}
&=(L^q(e^{-\frac{q\beta} 2|z|^{2\ell}}(1+|z|)^{q\rho}),
 L^q(e^{-\frac{q\beta}  2|z|^{2\ell}}(1+|z|)^{q\eta}))_{[\theta]}\\
&=L^q(e^{-\frac{q\beta}  2|z|^{2\ell}}(1+|z|)^{q((1-\theta)\rho+\theta\eta)})=
L^{q}_{\beta,(1-\theta)\rho+\theta\eta}.
\end{align*}
Therefore, as above, 
\[
(F^{q}_{\beta,\rho}, F^{q}_{\beta,\eta})_{[\theta]}
=P_\beta(L^{q}_{\beta,(1-\theta)\rho+\theta\eta})
=F^q(e^{-\frac\beta 2|z|^{2\ell}}(1+|z|)^{1-\theta)\rho+\theta\eta}).
\]
This ends the proof.
\end{proof}

\begin{proof}[Proof of Proposition \ref{prop:necinf>q}]

Assume $ F^{\infty}_{\beta,\rho}\hookrightarrow F^{q}_{\beta,\eta}$. By Lemma \ref{lem:interpolation},
\[
F^{s}_{\beta,\rho}=(F^{q}_{\beta,\rho}, F^{\infty}_{\beta,\rho})_{[\theta]}
\hookrightarrow
(F^{q}_{\beta,\rho}, F^{q}_{\beta,\eta})_{[\theta]}
=F^{q}_{\beta,(1-\theta)\rho+\theta\eta},
\]
with $\frac 1s=\frac{1-\theta}q$.
Since $q=(1-\theta)s<s<\infty$, Proposition \ref{prop:necpgrq}  gives 
\[
2n(\tfrac  1q-\tfrac 1s)<\rho-((1-\theta)\rho+\theta\eta)=
q(\tfrac 1q-\tfrac 1s)(\rho-\eta),
\]
and so $\frac{2n}q<\rho-\eta$.
\end{proof}

\subsection{Proof of Theorem \ref{thm:embeddings} for $1\le q<p\le\infty$} \quad\par

The sufficient conditions follow from Lemma \ref{lem:condsuffip>q}.

If $\beta\ne \gamma$ the necessary condition 
 $\beta<\gamma$ follows from  Lemma 
 \ref{lem:necessarypq}.
If $\beta= \gamma$ the necessary condition follows 
 from Propositions \ref{prop:necpgrq} and 
 \ref{prop:necinf>q}.

\section{Proof of Theorem \ref{thm:Bprojection}}
\label{sec:boundedness}

First we prove the necessary condition 
 $\beta<2\alpha$. For the case $\rho=0$ next lemma corresponds to \cite[Lemma 3]{bommier-englis-youssfi}.

\begin{lem}\label{lem:necbeta<2alpha}
Let $1\le p,q\le\infty$. 
If $P_\alpha$ is bounded from 
 $(L^{2}_\alpha\cap L^{p}_{\beta,\rho},
\|\cdot\|_{L^{p}_{\beta,\rho}})$ 
 to $L^{q}_{\gamma,\eta}$ then $\beta<2\alpha$.
	\end{lem}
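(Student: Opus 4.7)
\medskip

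The plan is to follow the template of part~(1) of Lemma~\ref{lem:well-defined} almost verbatim, but instead of exploiting a pointwise-evaluation bound for $P_{\alpha}f$, I will exploit the assumed $L^{q}_{\gamma,\eta}$-norm bound. The key observation is that on the special test functions used there, $P_{\alpha}f_R$ is just a scalar multiple of a fixed monomial, so controlling its $L^{q}_{\gamma,\eta}$-norm is equivalent to controlling the scalar, which is what contains the quantitative information forcing $\beta<2\alpha$.

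Concretely, I fix a multi-index $\nu$ and set
\[
f(z):=\frac{z^\nu}{(1+|z|)^{|\nu|+\rho+2n+1}}\,e^{\frac{\beta}{2}|z|^{2\ell}},
\qquad
f_R:=\mathcal{X}_{B(0,R)}\cdot f.
\]
Then $f\in L^{p}_{\beta,\rho}$ (by construction) and $f_R\in L^{2}_{\alpha}\cap L^{p}_{\beta,\rho}$ (being bounded with compact support). Exactly as in the proof of Lemma~\ref{lem:well-defined}, expanding $K_{\alpha,\zeta}$ in the orthonormal monomial basis and using integration in polar coordinates gives
\[
P_\alpha f_R(\zeta)=c_\nu(R)\,\frac{\zeta^\nu}{\|w^\nu\|^2_{F^{2}_{\alpha}}},\qquad
c_\nu(R)=\int_{B(0,R)}\frac{|w^\nu|^2}{(1+|w|)^{|\nu|+\rho+2n+1}}e^{(\frac{\beta}{2}-\alpha)|w|^{2\ell}}dV(w),
\]
which is non-negative and monotone increasing in $R$.

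Next I apply the boundedness hypothesis: since $\|f_R\|_{L^{p}_{\beta,\rho}}\le\|f\|_{L^{p}_{\beta,\rho}}<\infty$, we obtain
\[
\frac{c_\nu(R)}{\|w^\nu\|^2_{F^{2}_{\alpha}}}\,\|\zeta^\nu\|_{L^{q}_{\gamma,\eta}}
=\|P_\alpha f_R\|_{L^{q}_{\gamma,\eta}}\le C\,\|f\|_{L^{p}_{\beta,\rho}}.
\]
Because $\gamma>0$, the monomial norm $\|\zeta^\nu\|_{L^{q}_{\gamma,\eta}}$ is strictly positive and finite, so $c_\nu(R)$ is uniformly bounded in $R$. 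Monotone convergence then yields
\[
\int_{\C^n}\frac{|w^\nu|^2}{(1+|w|)^{|\nu|+\rho+2n+1}}\,e^{(\frac{\beta}{2}-\alpha)|w|^{2\ell}}\,dV(w)<\infty.
\]

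Finally, I choose $\nu=(k,0,\dots,0)$ with $k\ge\max(1,\lceil\rho+1\rceil)$. If $\beta>2\alpha$ the exponential factor blows up and the integral is infinite. If $\beta=2\alpha$ the exponential disappears and integration in polar coordinates reduces the question to the convergence of $\int_0^\infty r^{2k+2n-1}(1+r)^{-k-\rho-2n-1}dr$, which diverges precisely because $k\ge\rho+1$. In either case we reach a contradiction, and therefore $\beta<2\alpha$. The only mildly delicate point is making sure $\|\zeta^\nu\|_{L^{q}_{\gamma,\eta}}$ is positive and finite (ensured by $\gamma>0$ and the Gaussian weight), but everything else is a routine transcription of the argument already used for Lemma~\ref{lem:well-defined}.
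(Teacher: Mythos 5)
Your proof is correct, and it differs from the paper's in a small but genuine way. The paper disposes of this lemma in two lines: since point evaluation $g\mapsto g(\zeta)$ is bounded on $F^{q}_{\gamma,\eta}$ (Corollary \ref{cor:embedFinfty}), the hypothesis forces the functional $U_\zeta(f)=P_\alpha(f)(\zeta)$ to be bounded for the $\|\cdot\|_{L^{p}_{\beta,\rho}}$-norm, and then part (1) of Lemma \ref{lem:well-defined} applies verbatim. You instead inline the computation from the proof of Lemma \ref{lem:well-defined}(1) --- the same truncated test functions $f_R$, the same identity $P_\alpha f_R=c_\nu(R)\,\zeta^\nu/\|w^\nu\|^2_{F^{2}_{\alpha}}$, the same choice $|\nu|\ge\rho+1$, and monotone convergence --- but you extract the uniform bound on $c_\nu(R)$ from the $L^{q}_{\gamma,\eta}$-norm of the monomial rather than from its value at a point. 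The payoff is that your argument needs only the trivial fact that $0<\|\zeta^\nu\|_{L^{q}_{\gamma,\eta}}<\infty$ when $\gamma>0$, whereas the paper's reduction leans on Corollary \ref{cor:embedFinfty}, which ultimately rests on the Bergman-kernel norm estimates of Proposition \ref{prop:pnormBergman}; so your route is more self-contained for this particular statement. The paper's version is shorter on the page because those estimates are already available at that point, and it isolates the ``pointwise well-definedness'' issue in a single reusable lemma. The details you flag (membership of $f$ in $L^{p}_{\beta,\rho}$, positivity and finiteness of the monomial norm for all $1\le q\le\infty$, and the divergence of $\int_0^\infty r^{2k+2n-1}(1+r)^{-k-\rho-2n-1}\,dr$ exactly when $k\ge\rho+1$) all check out.
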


\begin{proof}
	For any $\z\in\C^n$, the linear form $g\mapsto g(\z)$ 
is bounded on $F^{q}_{\gamma,\eta}$ 
(see Corollary \ref{cor:embedFinfty}). Then 
the boundedness of 
$P_\alpha:(L^{2}_\alpha\cap L^{p}_{\beta,\rho},
\|\cdot\|_{L^{p}_{\beta,\rho}})\mapsto
 L^q_{\gamma,\eta}$
 implies the boundedness of the form 
 $U_\z(f)=P_\alpha(f)(\z)$ on 
$(L^{2}_\alpha\cap L^{p}_{\beta,\rho},
\|\cdot\|_{L^{p}_{\beta,\rho}})$.
 Hence  Lemma \ref{lem:well-defined} gives
 $\beta<2\alpha$.
\end{proof}

Now the proof of Theorem \ref{thm:Bprojection}
 follows from the next proposition and its corollary.

\begin{prop}\label{prop:Ponto}
	Let  $1\le p\le \infty$.
	If $0<\beta<2\alpha$ then the Bergman projection
	 $P_\alpha$ is bounded from  $L^{p}_{\beta,\rho}$ 
	 onto $F^{p}_{\alpha^2/(2\alpha-\beta),\rho}$.
\end{prop}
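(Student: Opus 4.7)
The plan is to establish two separate claims: (i) $P_\alpha$ is bounded from $L^p_{\beta,\rho}$ into $F^p_{\gamma,\rho}$, where $\gamma:=\alpha^2/(2\alpha-\beta)$, and (ii) $P_\alpha$ is surjective onto this space. The integral defining $P_\alpha\varphi(z)$ makes pointwise sense for every $\varphi\in L^p_{\beta,\rho}$ by Lemma~\ref{lem:well-defined}, since $\beta<2\alpha$, and the image is automatically entire.

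For (i), I would adapt the Schur-type argument in the proof of Proposition~\ref{prop:PLpontoFp}. Introduce the non-negative kernel
\[
\widetilde K(z,w):=(1+|z|)^{\rho}e^{-\frac{\gamma}{2}|z|^{2\ell}}\,|K_\alpha(z,w)|\,(1+|w|)^{-\rho}e^{-\frac{2\alpha-\beta}{2}|w|^{2\ell}},
\]
so that the trivial absolute value bound
\[
(1+|z|)^{\rho}e^{-\frac{\gamma}{2}|z|^{2\ell}}|P_\alpha\varphi(z)|\le\int_{\C^n}\widetilde K(z,w)\bigl(|\varphi(w)|(1+|w|)^{\rho}e^{-\frac{\beta}{2}|w|^{2\ell}}\bigr)\,dV(w)
\]
reduces the problem to $L^p$-boundedness of the integral operator with kernel $\widetilde K$. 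Applying Corollary~\ref{cor:qnormBergman} with $p=1$ (so that the polynomial factor $(1+|\,\cdot\,|)^{2n(\ell-1)/p'}$ disappears) first to $\|K_\alpha(\cdot,z)\|_{L^1_{2\alpha-\beta,-\rho}}$ and then to $\|K_\alpha(\cdot,w)\|_{L^1_{\gamma,\rho}}$, and using the two key identities $\alpha^2/(2(2\alpha-\beta))=\gamma/2$ and $\alpha^2/(2\gamma)=(2\alpha-\beta)/2$, one obtains the uniform Schur bounds $\int\widetilde K(z,w)\,dV(w)\lesssim 1$ and $\int\widetilde K(z,w)\,dV(z)\lesssim 1$. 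These produce the desired mapping for $p\in\{1,\infty\}$ directly and for $1<p<\infty$ by Riesz--Thorin interpolation.

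For (ii), I would exhibit an explicit preimage by dilation. Set $\lambda:=(\alpha/\gamma)^{1/\ell}$ and, given $f\in F^p_{\gamma,\rho}$, define
\[
\varphi(z):=\lambda^n f(\lambda z)\,e^{(\beta-\alpha)|z|^{2\ell}}.
\]
The change of variables $u=\lambda z$ in $\|\varphi\|_{L^p_{\beta,\rho}}^p$ produces the Gaussian exponent $(\beta/2-\alpha)/\lambda^{2\ell}$, which equals $-\gamma/2$ precisely because $\alpha^2=\gamma(2\alpha-\beta)$; hence $\|\varphi\|_{L^p_{\beta,\rho}}\simeq\|f\|_{F^p_{\gamma,\rho}}$. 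Substituting $u=\lambda w$ in the integral defining $P_\alpha\varphi(z)$ and applying the scaling identity~\eqref{eqn:Kdelta} in the form $K_\alpha(z,u/\lambda)=\lambda^n K_\gamma(z,u)$, the factors of $\lambda$ cancel and the combined exponential reduces to $e^{-\gamma|u|^{2\ell}}$ (using $(2\alpha-\beta)/\lambda^{2\ell}=\gamma$), so $P_\alpha\varphi=P_\gamma f$. Since $\gamma<2\gamma$, Corollary~\ref{cor:representation} gives $P_\gamma f=f$, whence $P_\alpha\varphi=f$. The delicate point, common to both parts, is the algebraic identity $\alpha^2=\gamma(2\alpha-\beta)$: it is exactly what balances the exponentials in the Schur estimate and what forces the particular dilation $\lambda^\ell=\alpha/\gamma$ to reproduce $f$. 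Without this critical relationship between $\gamma$ and the pair $(\alpha,\beta)$, neither the pair of Schur integrals nor the reduction $P_\alpha\varphi=P_\gamma f$ would close.
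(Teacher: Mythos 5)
Your argument is correct, and it differs from the paper's in a meaningful way on the boundedness half. The paper proves Proposition~\ref{prop:Ponto} by factoring $P_\alpha=P_\kappa\circ T_\delta$ (Lemma~\ref{lem:PatoPk}), where $\kappa=\alpha^2/(2\alpha-\beta)$ and $T_\delta$ is the dilation-times-Gaussian map shown in Lemma~\ref{lem:operatorT} to be an isomorphism of $L^p_{\beta,\rho}$ onto $L^p_{\kappa,\rho}$; since $P_\kappa$ is already known to be a bounded surjection of $L^p_{\kappa,\rho}$ onto $F^p_{\kappa,\rho}$ (Proposition~\ref{prop:PLpontoFp}), both boundedness and surjectivity drop out of the single identity $P_\alpha(L^p_{\beta,\rho})=P_\kappa(L^p_{\kappa,\rho})$. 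You instead prove boundedness directly by a Schur test on the weighted kernel $\widetilde K$, and your two Schur integrals do check out: by Corollary~\ref{cor:qnormBergman} with $p=1$ one gets $\int\widetilde K(z,w)\,dV(w)\simeq(1+|z|)^{\rho}e^{-\frac{\gamma}{2}|z|^{2\ell}}\cdot(1+|z|)^{-\rho}e^{\frac{\alpha^2}{2(2\alpha-\beta)}|z|^{2\ell}}\simeq1$ and symmetrically $\int\widetilde K(z,w)\,dV(z)\simeq1$, exactly because $\alpha^2=\gamma(2\alpha-\beta)$; the $L^1$ and $L^\infty$ endpoints plus interpolation (or Schur's test outright) then give all $p$. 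This buys a self-contained explanation of why the critical target weight is $\alpha^2/(2\alpha-\beta)$ — it is the unique exponent balancing both Schur integrals — at the cost of redoing work that the paper delegates to the diagonal case. Your surjectivity argument, by contrast, is essentially the paper's mechanism run backwards: your dilation $\varphi(z)=\lambda^nf(\lambda z)e^{(\beta-\alpha)|z|^{2\ell}}$ with $\lambda=1/\delta$ is precisely $T_\delta^{-1}$ applied after the identity $P_\gamma f=f$ from Corollary~\ref{cor:representation}, and the computation $P_\alpha\varphi=P_\gamma f$ via \eqref{eqn:Kdelta} is the content of Lemma~\ref{lem:PatoPk}. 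Both versions are complete and rely only on results already established in the paper.
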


\begin{cor}\label{cor:bounded-embed}
Let $1\le p,q\le\infty$ and let $0<\beta<2\alpha$.  
Then the Bergman projection $P_\alpha$ 
is bounded from $L^{p}_{\beta,\rho}$ to
 $L^{q}_{\gamma,\eta}$ if 
and only if
$F^{p}_{\alpha^2/(2\alpha-\beta),\rho}
	   \hookrightarrow F^{q}_{\gamma,\eta}$.
\end{cor}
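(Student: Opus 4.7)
The plan is to read Proposition~\ref{prop:Ponto} as saying that $P_\alpha \colon L^p_{\beta,\rho} \to F^p_{\alpha^2/(2\alpha-\beta),\rho}$ is a bounded linear surjection between Banach spaces; the corollary is then a formal consequence of this quotient-map structure, and the only verification needed is that the hypotheses of the open mapping theorem are met in all cases $1 \le p \le \infty$.

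For the \emph{if} direction, I would argue directly: assume the embedding $F^p_{\alpha^2/(2\alpha-\beta),\rho} \hookrightarrow F^q_{\gamma,\eta}$ holds. Given $\varphi \in L^p_{\beta,\rho}$, Proposition~\ref{prop:Ponto} gives $P_\alpha\varphi \in F^p_{\alpha^2/(2\alpha-\beta),\rho}$ with
\[
\|P_\alpha\varphi\|_{F^p_{\alpha^2/(2\alpha-\beta),\rho}} \lesssim \|\varphi\|_{L^p_{\beta,\rho}},
\]
and composing with the embedding yields $\|P_\alpha\varphi\|_{L^q_{\gamma,\eta}} \lesssim \|\varphi\|_{L^p_{\beta,\rho}}$, which is exactly the required boundedness.

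For the \emph{only if} direction, assume $P_\alpha \colon L^p_{\beta,\rho} \to L^q_{\gamma,\eta}$ is bounded. Fix $f \in F^p_{\alpha^2/(2\alpha-\beta),\rho}$. Since $L^p_{\beta,\rho}$ is Banach and $F^p_{\alpha^2/(2\alpha-\beta),\rho}$ is closed in $L^p_{\alpha^2/(2\alpha-\beta),\rho}$ (hence Banach) for every $p\in[1,\infty]$, the surjectivity statement of Proposition~\ref{prop:Ponto} combined with the open mapping theorem produces some $g \in L^p_{\beta,\rho}$ with $P_\alpha g = f$ and
\[
\|g\|_{L^p_{\beta,\rho}} \lesssim \|f\|_{F^p_{\alpha^2/(2\alpha-\beta),\rho}}.
\]
Applying the hypothesis gives
\[
\|f\|_{L^q_{\gamma,\eta}} = \|P_\alpha g\|_{L^q_{\gamma,\eta}} \lesssim \|g\|_{L^p_{\beta,\rho}} \lesssim \|f\|_{F^p_{\alpha^2/(2\alpha-\beta),\rho}},
\]
and since $f$ is entire, $f\in F^q_{\gamma,\eta}$ with the above norm estimate, so the embedding holds.

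The main obstacle is essentially a non-obstacle: everything reduces to checking that the open mapping theorem applies, which is immediate from Proposition~\ref{prop:Ponto} together with the completeness of the spaces involved (including the endpoint case $p=\infty$). No estimates on the Bergman kernel or the Mittag-Leffler asymptotics are needed at this stage, as all the analytic content has been absorbed into Proposition~\ref{prop:Ponto}.
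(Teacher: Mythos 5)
Your proof is correct and takes essentially the same route as the paper: both directions reduce to Proposition~\ref{prop:Ponto}, the forward one by composing with the embedding, and the converse by lifting $f\in F^{p}_{\alpha^2/(2\alpha-\beta),\rho}$ to a preimage $g\in L^{p}_{\beta,\rho}$ of controlled norm. The only (harmless) difference is that you produce this controlled preimage via the open mapping theorem, whereas the paper's construction already supplies an explicit bounded right inverse --- namely $g=T_\delta^{-1}f$, using that $T_\delta$ is a topological isomorphism (Lemma~\ref{lem:operatorT}) and $f=P_{\kappa}f$ for $\kappa=\alpha^2/(2\alpha-\beta)$ --- so no appeal to completeness is strictly needed.
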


Taking for granted these results, we finish the proof of Theorem \ref{thm:Bprojection}.

\begin{proof}[Proof of Theorem \ref{thm:Bprojection}]
By Lemma \ref{lem:necbeta<2alpha} it is clear that 
$\beta<2\alpha$ is a necessary condition for the
 boundedness of  $P_\alpha$ from
 $L^{p}_{\beta,\rho}$ to $L^{q}_{\gamma,\eta}$.

If $\beta<2\alpha$, Corollary \ref{cor:bounded-embed}
 shows that 
$P_\alpha$ is bounded from
 $L^{p}_{\beta,\rho}$ to $L^{q}_{\gamma,\eta}$ 
if and only $F^{p}_{\alpha^2/(2\alpha-\beta),\rho}
	   \hookrightarrow F^{q}_{\gamma,\eta}$.
Thus Theorem \ref{thm:Bprojection} is a consequence
 of Theorem \ref{thm:embeddings}.
\end{proof}

We conclude this section with the proofs of 
Proposition \ref{prop:Ponto}
and Corollary \ref{cor:bounded-embed}. 
To do so, we introduce the following notations which
 will used in the next results.
For $\beta<2\alpha$, let 
\[
\delta:=\bigl(\tfrac{\alpha}{2\alpha-\beta}\bigr)^{1/\ell}
\quad\text{and}\quad 
\kappa:=\alpha\delta^\ell
=\tfrac{\alpha^2}{2\alpha-\beta}.
\]

The next lemma follows from \eqref{eqn:Kdelta}.

\begin{lem}\label{lem:PatoPk}
If $f\in L^{p}_{\beta,\rho}$, then 
$P_\alpha(f)=P_\kappa(T_\delta(f))$, where
\[
T_\delta(f)(z)
=\delta^n f(\delta z)e^{(\alpha-\beta)|\delta z|^{2\ell}}.
\]
\end{lem}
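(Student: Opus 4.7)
My plan is a direct computation, using the scaling identity~\eqref{eqn:Kdelta} for the Bergman kernel together with a change of variables.

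First, I would verify that both sides are pointwise well defined. By Lemma~\ref{lem:well-defined} and the hypothesis $\beta<2\alpha$, the integral
\[
P_\alpha(f)(z)=\int_{\C^n}f(w)K_\alpha(z,w)e^{-\alpha|w|^{2\ell}}\,dV(w)
\]
converges absolutely for $f\in L^{p}_{\beta,\rho}$. A short computation, using the identity $\kappa/\delta^{2\ell}=2\alpha-\beta$ which follows from the definitions of $\delta$ and $\kappa$, shows that $T_\delta(f)\in L^{p}_{\kappa,\rho}$ whenever $f\in L^{p}_{\beta,\rho}$, so $P_\kappa(T_\delta f)$ is pointwise well defined by the same lemma (applied with parameters $\kappa<2\kappa$).

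Next, starting from the right-hand side
\[
P_\kappa(T_\delta f)(z)=\int_{\C^n}\delta^n f(\delta w)\,e^{(\alpha-\beta)|\delta w|^{2\ell}}\,K_\kappa(z,w)\,e^{-\kappa|w|^{2\ell}}\,dV(w),
\]
I would apply the change of variables $u=\delta w$, which produces a Jacobian factor $\delta^{-2n}$. Then I would invoke~\eqref{eqn:Kdelta} with the roles adjusted: taking $\delta'=1/\delta$, the identity gives
\[
K_\kappa(z,u/\delta)=\delta^{n}\,K_{\kappa/\delta^\ell}(z,u)=\delta^{n}\,K_\alpha(z,u),
\]
since $\kappa/\delta^\ell=\alpha$ by construction. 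Substituting yields
\[
P_\kappa(T_\delta f)(z)=\int_{\C^n}f(u)\,K_\alpha(z,u)\,\exp\!\bigl\{(\alpha-\beta)|u|^{2\ell}-(\kappa/\delta^{2\ell})|u|^{2\ell}\bigr\}\,dV(u).
\]

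Finally I would simplify the exponent: using $\kappa/\delta^{2\ell}=2\alpha-\beta$, we get $(\alpha-\beta)-(2\alpha-\beta)=-\alpha$, so the integrand reduces exactly to $f(u)\,K_\alpha(z,u)\,e^{-\alpha|u|^{2\ell}}$, which is $P_\alpha(f)(z)$. There is no substantive obstacle here; the proof is a one-line book-keeping exercise once~\eqref{eqn:Kdelta} is in hand, and the only thing worth flagging is the verification that all integrals involved converge, which is supplied by Lemma~\ref{lem:well-defined}.
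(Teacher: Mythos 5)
Your proposal is correct and is essentially the paper's own argument: both rest on the scaling identity~\eqref{eqn:Kdelta} (with $\kappa\delta^{-2\ell}=2\alpha-\beta$) and the change of variables $w=\delta u$, the only difference being that you start from $P_\kappa(T_\delta f)$ and work back to $P_\alpha(f)$ while the paper runs the same computation in the other direction. Your added remark on absolute convergence via Lemma~\ref{lem:well-defined} is a harmless (and welcome) bit of extra care.
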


\begin{proof}
Using the change of variables $w=\delta u$ and 
 \eqref{eqn:Kdelta}, we obtain 
\begin{align*}
P_\alpha(f)(z)
&=\delta^{2n}\int_{\C^n} f(\delta u) K_\alpha(z,\delta u)
 e^{-\alpha |\delta u|^{2\ell}}dV(u)\\
&=\delta^{n}\int_{\C^n} [f(\delta u) 
e^{(-\alpha +\kappa\delta^{-2\ell})|\delta u|^{2\ell}}]
 K_\kappa(z,u) e^{-\kappa | u|^{2\ell}}dV(u).
\end{align*}
Since $-\alpha+\kappa\delta^{-2\ell}
=-\alpha+\alpha\delta^{-\ell}
=-\alpha+2\alpha-\beta=\alpha-\beta$ 
we obtain the result.
\end{proof}

\begin{lem}\label{lem:operatorT}
The operator $T_\delta$ is a topological  isomorphism
 from $L^{p}_{\beta,\rho}$ onto $L^{p}_{\kappa,\rho}$.
\end{lem}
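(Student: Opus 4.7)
The plan is to verify the statement by a direct change of variables, checking that $T_\delta$ is a bounded linear bijection whose inverse is also bounded. Linearity is obvious, so I would focus on the norm equivalence $\|T_\delta f\|_{L^p_{\kappa,\rho}}\simeq\|f\|_{L^p_{\beta,\rho}}$ for each $p\in[1,\infty]$.

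First I would treat the case $1\le p<\infty$. Starting from the definition of $T_\delta$ and the weight defining $L^p_{\kappa,\rho}$, I would write
\[
\|T_\delta f\|_{L^p_{\kappa,\rho}}^p
=\int_{\C^n}\delta^{np}|f(\delta z)|^p
  e^{p(\alpha-\beta)|\delta z|^{2\ell}}(1+|z|)^{\rho p}
  e^{-\frac{\kappa p}{2}|z|^{2\ell}}\,dV(z)
\]
and then substitute $w=\delta z$, so that $dV(z)=\delta^{-2n}dV(w)$ and $1+|z|=1+|w|/\delta\simeq 1+|w|$ (since $\delta$ is a fixed positive constant).

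The crucial algebraic identity is
\[
\frac{\kappa}{\delta^{2\ell}}
=\frac{\alpha\delta^{\ell}}{\delta^{2\ell}}
=\frac{\alpha}{\delta^{\ell}}=2\alpha-\beta,
\]
so the two exponential factors combine as
\[
p(\alpha-\beta)-\tfrac{p}{2}(2\alpha-\beta)=-\tfrac{p\beta}{2}.
\]
Plugging everything in gives $\|T_\delta f\|_{L^p_{\kappa,\rho}}^p=\delta^{n(p-2)}\int_{\C^n}|f(w)|^p(1+|w|/\delta)^{\rho p}e^{-\frac{p\beta}{2}|w|^{2\ell}}\,dV(w)\simeq\|f\|_{L^p_{\beta,\rho}}^p$. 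The case $p=\infty$ is handled by exactly the same substitution (with $\operatorname{ess\,sup}$ in place of the integral and no Jacobian factor).

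Finally, I would produce an explicit inverse. A direct calculation suggests
\[
S_\delta g(w):=\delta^{-n}g(w/\delta)e^{-(\alpha-\beta)|w|^{2\ell}},
\]
and one checks $S_\delta\circ T_\delta=\operatorname{id}$ and $T_\delta\circ S_\delta=\operatorname{id}$ directly. Running the same change of variables argument in reverse (or applying what has been proved with the roles of $\beta,\kappa$ and of $\delta,\delta^{-1}$ interchanged, using $\kappa/\alpha=1/\delta^{\ell}$) shows $S_\delta:L^p_{\kappa,\rho}\to L^p_{\beta,\rho}$ is bounded. Hence $T_\delta$ is a topological isomorphism. There is no real obstacle here, only careful bookkeeping with the exponents; the single substantive point is the identity $\kappa/\delta^{2\ell}=2\alpha-\beta$, which forces the weights to match up correctly.
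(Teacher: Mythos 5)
Your proof is correct and follows essentially the same route as the paper: both rest on the identity $\kappa\delta^{-2\ell}=2\alpha-\beta$ (the paper phrases it as $\alpha-\beta=-\tfrac{\beta}{2}+\tfrac{\kappa}{2}\delta^{-2\ell}$) to match the weights after the substitution $w=\delta z$, together with the same explicit inverse $\delta^{-n}g(z/\delta)e^{(\beta-\alpha)|z|^{2\ell}}$. The only blemish is the parenthetical remark ``$\kappa/\alpha=1/\delta^{\ell}$'' (in fact $\kappa/\alpha=\delta^{\ell}$), but this is immaterial since your primary argument for the boundedness of the inverse does not use it.
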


\begin{proof}
Since 
 $\alpha-\beta=-\frac{\beta}2+\frac{2\alpha-\beta}2
=-\frac{\beta}2+\frac \kappa 2 \delta^{-2\ell}$, 
we have
\[
T_\delta(f)(z)=\delta^n f(\delta z)
e^{-\frac{\beta}2|\delta z|^{2\ell}} 
e^{\frac{\kappa}2| z|^{2\ell}}.
\]
Therefore
\begin{align*}
\|T_\delta(f)\|_{L^{p}_{\kappa,\rho}}
&\simeq \|f(\delta z)
e^{-\frac{\beta}2|\delta z|^{2\ell}}(1+|z|)^\rho\|_{L^p}\\
&\simeq \|f(\delta z)
e^{-\frac{\beta}2|\delta z|^{2\ell}}
(1+|\delta z|)^\rho\|_{L^p}
\simeq \|f\|_{L^{p}_{\beta,\rho}}.
\end{align*}

So to conclude the proof we only need to show that 
the operator $T_\delta$ is surjective. 
This follows from the fact that the unique solution 
of the equation $T_\delta (f)=g$ is 
 $f(z)=\delta^{-n}g(z/\delta)e^{(\beta-\alpha)|z|^{2\ell}}$
and  
\[\|f\|_{L^{p}_{\beta,\rho}}
\simeq \|T_\delta(f)\|_{L^{p}_{\kappa,\rho}}
=\|g\|_{L^{p}_{\kappa,\rho}}.\qedhere
\]
\end{proof}

\begin{proof}[Proof of Proposition \ref{prop:Ponto}]

By Proposition \ref{prop:PLpontoFp}, $P_\kappa$ 
is a bounded operator from $L^{p}_{\kappa,\rho}$
 onto $F^{p}_{\kappa,\rho}$.
So Lemmas \ref{lem:PatoPk} and \ref{lem:operatorT}
give
\[
P_{\alpha}(L^{p}_{\beta,\rho})
=P_{\kappa}(T_\delta(L^{p}_{\beta,\rho}))
=P_{\kappa}(L^{p}_{\kappa,\rho})
=F^{p}_{\kappa,\rho}.\qedhere
\]

\end{proof}

\begin{proof}[Proof of Corollary 
 \ref{cor:bounded-embed}]
By Proposition \ref{prop:Ponto}, it is clear  that if
$F^{p}_{\alpha^2/(2\alpha-\beta),\rho}
\hookrightarrow F^{q}_{\gamma,\eta}$, 
then $P_\alpha$
is bounded from $L^{p}_{\beta,\rho}$ to 
 $L^{q}_{\gamma,\eta}$.

Conversely, if  $P_\alpha$ is bounded from 
 $L^{p}_{\beta,\rho}$ to $L^{q}_{\gamma,\eta}$ then,  
by Proposition \ref{prop:Ponto},
\[
F^{p}_{\alpha^2/(2\alpha-\beta),\rho}
=P_\alpha(L^{p}_{\beta,\rho})
\hookrightarrow F^{q}_{\gamma,\eta}.\qedhere
\]
	\end{proof}

\end{document}